\newtheoremstyle{theorem-giventitle}
        {}{}              
        {\itshape}                      
        {}                              
        {\bfseries}                     
        {.}                             
        {\thm@headsep}                             
        {\thmnote{\bfseries#3}}
\newtheoremstyle{theorem-givenlabel}
        {}{}              
        {\itshape}                      
        {}                              
        {\bfseries}                     
        {.}                             
        {\thm@headsep}                             
        {\thmname{#1}~\thmnumber{#3}\setcurrentlabel{#3}}
\newtheoremstyle{definition-giventitle}
        {}{}              
        {}                      
        {}                              
        {\bfseries}                     
        {.}                             
        {\thm@headsep}                             
        {\thmnote{\bfseries#3}}
\def\setcurrentlabel#1{\gdef\@currentlabel{#1}}
\newtheorem{theorem}{Theorem}[section]
\newtheorem{proposition}[theorem]{Proposition}
\newtheorem{corollary}[theorem]{Corollary}
\newtheorem{lemma}[theorem]{Lemma}
\newtheorem*{theorem*}{Theorem}
\newtheorem*{proposition*}{Proposition}
\newtheorem*{lemma*}{Lemma}
\newtheorem*{corollary*}{Corollary}
\newtheorem*{rep@theorem}{\rep@title}
\newtheorem*{claim*}{Claim}
\newcommand{\newreptheorem}[2]{
\newenvironment{rep#1}[1]{
\def\rep@title{#2 \ref{##1}}
\begin{rep@theorem}}
{\end{rep@theorem}}}
\theoremstyle{definition}
\newtheorem{definition}[theorem]{Definition}
\newtheorem{remark}[theorem]{Remark}
\newtheorem{example}[theorem]{Example}
\newtheorem*{question*}{Question}
\newcommand{\bdry}{\partial}
\DeclareMathOperator{\PSL}{PSL}
\newcommand{\Z}{\mathbb{Z}}
\newcommand{\CC}{\mathbb{C}}
\newcommand{\K}{\mathcal{K}}
\newcommand{\mL}{\mathcal{L}}
\newcommand{\U}{\mathcal{U}}
\newcommand{\im}{\operatorname{Im}}
\newcommand{\lk}{\operatorname{lk}}
\newcommand{\Id}{\operatorname{Id}}
\newcommand{\G}{\mathcal{G}}
\DeclareMathOperator{\HE}{HE}
\DeclareMathOperator{\SO}{SO}
\newcommand{\Homeo}{\text{Homeo}}
\newcommand{\HMod}{\text{HMod}}
\newcommand{\Aut}{\text{Aut}}
\newcommand{\Out}{\text{Out}}
\newcommand{\Inn}{\text{Inn}}
\newcommand{\Mod}{\text{Mod}}
\newcommand{\Diff}{\text{Diff}}
\DeclareMathOperator{\Isom}{Isom}
\newcommand{\Hy}{\mathbb{H}}
\newcommand{\Sa}{\mathbb{S}}
\newcommand{\overl}{\overline}
\newcommand{\R}{\mathbb{R}}
\renewcommand{\S}{\mathscr{S}}
\newcommand{\into}{\hookrightarrow}
\newcommand{\Sum}{\displaystyle \sum}
\newcommand{\pref}[1]{(\ref{#1})}
\LetLtxMacro\Oldfootnote\footnote
\begin{document}

\title{Isotopy and equivalence of knots in $3$-manifolds}

\author[Aceto]{Paolo Aceto}
\address{Laboratorie Paul Painlevé - Université de Lille, France}
\email{paoloaceto@gmail.com}
\urladdr{https://sites.google.com/view/paoloaceto/home}

\author[Bregman]{Corey Bregman}
\address{Department of Mathematics, Tufts University, MA, USA}
\email{corey.bregman@tufts.edu}
\urladdr{https://sites.google.com/view/cbregman}

\author[Davis]{Christopher W.\ Davis}
\address{Department of Mathematics, University of Wisconsin--Eau Claire, WI, USA}
\email{daviscw@uwec.edu}
\urladdr{http://people.uwec.edu/daviscw}

\author[Park]{\\JungHwan Park}
\address{Department of Mathematical Sciences, KAIST, South Korea}
\email{jungpark0817@gmail.com}
\urladdr{https://mathsci.kaist.ac.kr/~jungpark0817}

\author[Ray]{Arunima Ray}
\address{School of Mathematics and Statistics, University of Melbourne, VIC, Australia}
\email{aru.ray@unimelb.edu.au}
\urladdr{https://aru-ray.github.io/}

\date{\today}

\def\subjclassname{\textup{2020} Mathematics Subject Classification}
\expandafter\let\csname subjclassname@1991\endcsname=\subjclassname
\subjclass{
57K10, 
57K30, 
20F34, 
20F65
}

\begin{abstract}
Two knots $K$ and $J$ in $S^3$ are isotopic if and only if they are related by an orientation-preserving diffeomorphism of $S^3$. This claim follows from the fact that any orientation-preserving self diffeomorphism of $S^3$ is isotopic to the identity.   We show that this same idea applies to any prime oriented closed 3-manifold.  More precisely, 
we show that a prime closed oriented 3-manifold contains a pair of
equivalent but non-isotopic knots if and only if the (orientation-preserving) mapping class group is non-trivial. 
When $M$ is additionally irreducible we show that an orientation-preserving diffeomorphism of $M$ is isotopic to the identity if and only if it preserves all homotopy classes of knots.  For knots in $S^1\times S^2$ (the only reducible prime oriented 3-manifold) we exhibit infinitely many knots whose isotopy classes are not preserved by the Gluck twist.
\end{abstract}

\maketitle

\section{Introduction}

Let $M$ be a smooth, closed, oriented  $3$-manifold. A knot $K$ in $M$ is a smooth embedding of the (oriented) circle in $M$. There are three natural equivalence relations on knots in $M$. We say that the knots $K,J\colon S^1\into M$ are \emph{equivalent} if there is an orientation-preserving diffeomorphism $f\colon M\to M$ such that $f\circ K=J$. On the other hand, we say that $K$ and $J$ are \emph{isotopic} if there exists a smooth map
\[
F \colon S^1 \times [0,1] \to M; \qquad (x,t) \mapsto F_t(x),
\]
such that each $F_t$, for $t\in [0,1]$, is a smooth embedding, with $F_0 = K$ and $F_1 = J$. Finally we say that $K$ and $J$ are \emph{ambient isotopic} if there exists a smooth map
\[
G \colon M \times [0,1] \to M; \qquad (x,t) \mapsto G_t(x),
\]
such that each $G_t$, for $t\in [0,1]$, is a diffeomorphism, with $G_0=\Id_M$ and $G_1 \circ K = J$. 
By the isotopy extension theorem, two knots in any fixed $M$ are isotopic if and only if they are ambient isotopic. It is clear from the definitions that ambient isotopy implies equivalence of knots. It is then interesting to ask to what extent equivalence implies isotopy for knots.

Let $\Mod^+(M)$ denote the \emph{mapping class group} of $M$, that is, the set of orientation-preserving diffeomorphisms of the smooth, closed, oriented $3$-manifold $M$, modulo isotopy.\footnote{While it is common to define the mapping class group using homeomorphisms modulo isotopy, in this article we work in the smooth category for simplicity. This is justified by the fact that the topological and smooth mapping class groups are naturally isomorphic. See Section~\ref{sec:proof-irred} for further details.} 
Every orientation-preserving diffeomorphism of $S^3$ is isotopic to the identity~\cite{Cerf:1968}; in other words, the mapping class group of $S^3$ is trivial, and thus, the notions of equivalence and isotopy of knots coincide. Plainly, equivalent knots are isotopic in any $3$-manifold with trivial mapping class group. The converse is the main result of this paper. 

\begin{theorem}\label{thm:main}
Let $M$ be a prime, smooth, closed, oriented $3$-manifold. If $f\colon M\to M$ is an orientation-preserving diffeomorphism which is not isotopic to the identity, then there exists a knot $K\subseteq M$ such that $K$ and $f(K)$ are not isotopic.

In particular, $M$ contains a pair of equivalent but not isotopic knots if and only if $\Mod^+(M)$ is non-trivial. 
\end{theorem}

In the case of irreducible $3$-manifolds, Theorem~\ref{thm:main} is a direct consequence of the following more general theorem. Throughout the article, a loop $\gamma$ in a $3$-manifold $M$ refers to a continuous map $\gamma\colon S^{1}\to M$. The homotopy class of $\gamma$ canonically determines a conjugacy class in $\pi_{1}(M)$.

\begin{theorem}\label{thm:irred-rigid}\label{thm:freehomotopy}
Let $M$ be an irreducible, smooth, closed, oriented $3$-manifold.  If $f\colon M\to M$ is an orientation-preserving diffeomorphism which is not isotopic to the identity, then there exists a loop $\gamma\subseteq M$ such that $\gamma$ and $f(\gamma)$ are not homotopic.
\end{theorem}

Equivalently, Theorem~\ref{thm:irred-rigid} says that if $f$ acts by the identity on the set of conjugacy classes of $\pi_1(M)$ then $f$ is isotopic to the identity. Theorem~\ref{thm:irred-rigid} implies Theorem~\ref{thm:main} for irreducible, closed, oriented $3$-manifolds, since isotopic knots are in particular homotopic. 

We now describe the proof of Theorem~\ref{thm:irred-rigid}. As usual, given a group $G$, the outer automorphism group $\Out(G)$ is the quotient $\Aut(G)/\Inn(G)$. Each self-diffeomorphism of a $3$-manifold $M$ induces an element of $\Out(\pi_1(M))$. Here, we must quotient out by the inner automorphism group since the diffeomorphism may not preserve the basepoint of the fundamental group. In particular, since an isotopy may not preserve the basepoint, a self-diffeomorphism of $M$ that is isotopic to the identity can only be said to induce the trivial element of $\Out(\pi_1(M))$, rather than the identity map on $\pi_1(M)$ itself. 

For irreducible, oriented $3$-manifolds, it is a consequence of the work of many authors, as explained in the proof of Theorem~\ref{thm:mappingclass},
that the above map is an injection, that is, $\Mod^+(M)\hookrightarrow \Out(\pi_1(M))$. Then the proof of Theorem~\ref{thm:irred-rigid} would be completed by showing that if an orientation-preserving diffeomorphism $f\colon M\to M$ on an irreducible, closed, oriented $3$-manifold $M$ preserves homotopy classes of loops, or equivalently, fixes every conjugacy class of $\pi_1(M)$, then $f_*$ is an inner automorphism. 

\begin{definition}[Property~A of groups]\label{def:propA}
An automorphism of a group $G$ is called \textit{class preserving} if it fixes every conjugacy class of $G$. A group is said to have \textit{Grossman's Property A} if every class preserving automorphism is inner.\footnote{This notion should not be confused with the weakening of amenability, also called Property A, introduced by Yu in~\cite{otherproperty}.}
\end{definition}

Clearly, inner automorphisms are class preserving.  
Grossman introduced Property A in~\cite{Gr75} and showed that free groups and surface groups have Property A.  She then used this to prove that the outer automorphism groups of free groups and surface groups are residually finite. 

Although infinite groups without Property A have been constructed (see e.g.~\cite{Segal, KimZhou}), Property A has been shown to hold when a group exhibits some degree of hyperbolicity.  First, Neshchadim proved that non-trivial free products have Property A~\cite{Neshchadim:1996}. This was greatly generalized by Minasyan--Osin to the setting of relatively hyperbolic groups, who in fact showed that for such groups any automorphism that preserves normal subgroups is inner~\cite{MO10}. Most recently, Antol\'in--Minasyan--Sisto~\cite{AMS2016} further extended the results of~\cite{MO10} to commensurating endomorphisms of acylindrically hyperbolic groups.  

We prove the following result, which completes the proof of Theorem~\ref{thm:irred-rigid}.

\begin{theorem}\label{thm:PropertyA}Let $M$ be a closed, orientable $3$-manifold. Then $\pi_1(M)$ has Property~A.  
\end{theorem}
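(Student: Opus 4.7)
The plan is to reduce, via the prime and geometric decompositions of $M$, to cases in which known structural properties of the fundamental group yield Property~A immediately.

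\textbf{Reduction to prime $M$.} Write $M = M_1 \# \cdots \# M_n$ as a connected sum of prime summands, so that $\pi_1(M) \cong \pi_1(M_1) * \cdots * \pi_1(M_n)$ by Kneser--Milnor and Seifert--van Kampen. If this presentation is a non-trivial free product (at least two non-trivial factors, or an $S^1 \times S^2$ summand contributing a free $\Z$-factor), Neshchadim's theorem~\cite{Neshchadim:1996} gives Property~A directly. Otherwise $M$ itself is prime, so either $M = S^1 \times S^2$, in which case $\pi_1(M) = \Z$ is abelian and Property~A is immediate, or $M$ is closed and irreducible.

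\textbf{Acylindrically hyperbolic cases.} Assuming $M$ closed and irreducible, apply geometrization. If $M$ is hyperbolic, then $\pi_1(M)$ is a non-elementary word hyperbolic group, hence acylindrically hyperbolic. If $M$ has a non-trivial JSJ decomposition (in particular, if $M$ is non-geometric), $\pi_1(M)$ is again acylindrically hyperbolic by the results of Minasyan--Osin~\cite{MO10}. In either case a class-preserving automorphism is commensurating and therefore inner by Antol\'in--Minasyan--Sisto~\cite{AMS2016}.

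\textbf{Small geometric cases.} The remaining possibilities are the geometries $S^3$, $\E^3$, $\Nil$, $\Solv$, $\Hy^2 \times \E$, and $\SLR$. For the four Seifert fibered geometries, $\pi_1(M)$ fits into a central extension
\[
1 \longrightarrow C \longrightarrow \pi_1(M) \longrightarrow \Gamma \longrightarrow 1,
\]
where $C = Z(\pi_1(M))$ is infinite cyclic and $\Gamma$ is the (Fuchsian or Euclidean) orbifold fundamental group of the base. A class-preserving automorphism $\phi$ preserves the characteristic subgroup $C$ and acts as the identity on it, and descends to a class-preserving automorphism $\bar\phi$ of $\Gamma$. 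Since Fuchsian groups have Property~A (by Grossman~\cite{Gr75} for surface groups and by~\cite{MO10, AMS2016} in the presence of torsion) and Euclidean orbifold groups can be checked by hand, $\bar\phi$ is inner. Lifting the conjugator to $g \in \pi_1(M)$ and replacing $\phi$ by $\operatorname{conj}_g^{-1}\circ \phi$, we reduce to the case that $\phi(x) = x \cdot z(x)$ for a homomorphism $z\colon \pi_1(M) \to C$, and the class-preserving hypothesis together with the explicit presentation of the Seifert fibered group forces $z \equiv 0$. The $\Solv$ case is handled by a direct analysis of the extension $1 \to \Z^2 \to \pi_1(M) \to \Z \to 1$ with Anosov monodromy, and the spherical case requires verifying Property~A for each finite subgroup of $\SO(4)$ that acts freely on $S^3$, using the classification.

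The main obstacle is this final step. Since finite groups do not in general satisfy Property~A, the spherical case cannot be dispatched by a single structural argument and genuinely requires a family-by-family verification for cyclic, binary dihedral, binary polyhedral, and the remaining freely acting subgroups of $\SO(4)$. The Seifert fibered step is also subtle, because $C \cap [\pi_1(M), \pi_1(M)]$ can be non-trivial, so the vanishing of $z$ must be extracted from finer group-theoretic structure than the abelianisation alone.
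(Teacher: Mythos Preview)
Your outline follows essentially the same route as the paper: reduce to prime, invoke acylindrical hyperbolicity via~\cite{MO2015, AMS2016} for the generic case, and treat Seifert-fibered and Sol manifolds by hand. The paper's proof is literally the three-line argument at the end of Section~\ref{sec:propA}, citing Theorem~\ref{thm:AcylindricalA}, Theorem~\ref{thm:SeifertA}, and Proposition~\ref{prop:AnosovA} for the three cases of~\cite[Corollary~2.9]{MO2015}. So the architecture is right, but two concrete points in your sketch are wrong as stated.

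\textbf{Sol is not always a torus bundle.} Your ``direct analysis of the extension $1 \to \Z^2 \to \pi_1(M) \to \Z \to 1$ with Anosov monodromy'' covers only the Anosov \emph{bundles}. A closed orientable Sol manifold can also be an \emph{Anosov double}: two twisted $I$-bundles over the Klein bottle glued along their torus boundaries by an Anosov map. For these the relevant extension is $1 \to \Z^2 \to \pi_1(M) \to \Z/2 * \Z/2 \to 1$, and a separate argument is needed. The paper treats both cases in Proposition~\ref{prop:AnosovA}; the double case uses Neshchadim on $\Z/2*\Z/2$ together with an explicit centralizer computation in $\mathrm{GL}_2(\Z)$.

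\textbf{The Seifert extension is not always central.} Your claim that $\pi_1(M)$ sits in a central extension $1 \to C \to \pi_1(M) \to \Gamma \to 1$ with $C = Z(\pi_1(M))$ infinite cyclic fails whenever the base orbifold is non-orientable (then conjugation by an orientation-reversing loop inverts the fiber class, so the fiber subgroup is normal but not central), and also for some flat manifolds where the center is larger than $\Z$. The paper sidesteps this entirely by quoting Allenby--Kim--Tang~\cite{AKT2003, ATK:2009} (Theorem~\ref{thm:AKTseifert}) for all Seifert bases except $S^2(p,q,r)$ and $T^2(n)$, both of which \emph{are} orientable, so that the central-extension mechanism (packaged as Lemma~\ref{lem:CentralExtension}) applies cleanly. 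Your argument, lacking that citation, would need to handle non-orientable bases directly.

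Finally, the paper does carry out exactly the computations you flag as obstacles: the key lemma forcing $z\equiv 0$ is Lemma~\ref{lem:CentralExtension}, which needs the extra hypothesis that $\Gamma$ is generated by elements with cyclic centralizer; the Euclidean triangle groups $G_0(2,4,4)$, $G_0(3,3,3)$, $G_0(2,3,6)$ are checked by explicit lattice calculations in Proposition~\ref{prop:EucOrb}; and the spherical groups are handled family-by-family in Proposition~\ref{prop:SphereOrb}.
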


This theorem was already known in certain cases. Compact 3-manifold groups are either acylindrically hyperbolic, Seifert-fibered, or virtually solvable \cite[Corollary~2.9]{MO2015}, and they are known to be conjugacy separable~\cite{HWZ13}. Here we say that a group is \emph{Seifert-fibered} if it is the fundamental group of a Seifert-fibered $3$-manifold. Antol\'in--Minasyan--Sisto's proof~\cite{AMS2016} that $\Out(\pi_1(M))$ is residually finite when $M$ is a compact 3-manifold in the acylindrically hyperbolic case uses Grossman's criterion and therefore establishes that they have Property A. They also prove that $\Out(\pi_1(M))$ is residually finite in the Seifert-fibered or virtually solvable cases, but the method is more direct and does not address Property A.

On the other hand, Allenby--Kim--Tang have shown that if $M$ is a Seifert-fibered $3$-manifold whose base orbifold is not a sphere with precisely $3$ cone points nor a torus with a single cone point, then $\pi_1(M)$ has Property A~\cite{AKT2003, ATK:2009}. In Theorem~\ref{thm:SeifertA} we establish Property A for fundamental groups of all Seifert-fibered $3$-manifolds without any restriction on the base of the fibering. Finally, the case of virtually solvable but not Seifert-fibered corresponds to those closed manifolds supporting Sol-geometry.  We treat Property A for these manifolds in Proposition~\ref{prop:AnosovA}, completing the proof of Theorem~\ref{thm:PropertyA}.

In light of Theorem~\ref{thm:freehomotopy}, the only remaining case needed to verify Theorem~\ref{thm:main} is that of $S^1\times S^2$.
The \emph{geometric winding number} of a knot $K\subseteq S^1\times S^2$ is the minimal number of times that $K$ intersects a non-separating sphere in $S^1\times S^2$ and its \emph{$($algebraic$)$ winding number} is the algebraic intersection number of $K$ and $\{pt\}\times S^2$. The latter quantity is a well-defined integer since 
both $K$ and
$\{pt\}\times S^2$ are oriented. The manifold $S^1\times S^2$ is the result of $0$-framed Dehn surgery on $S^3$ along the unknot. Thus, every knot $K\subseteq S^1\times S^2$ may be presented as a knot in such a surgery diagram as in Figure~\ref{fig:Generic-Twist}. The orientation of $\{pt\}\times S^2$ is indicated by the arrow on the $0$-framed unknot.

Define the \emph{Gluck twist} 
\[
\G\colon S^1\times S^2\to S^1\times S^2; \qquad (\theta, s) \mapsto (\theta, \rho_\theta(s)),
\]
where $\rho_\theta$ is given by rotating $S^2$ about the vertical axis by angle $\theta$. For a given diagram of a knot $K\subseteq S^1\times S^2$, the effect of $\G$ is to insert a full right-handed twist in all strands intersecting $\{pt\}\times S^2$, as shown in Figure~\ref{fig:Generic-Twist}. Note that for any knot $K\subseteq S^1\times S^2$ the resulting knot $\G(K)$ is equivalent to $K$, by definition. For future reference, we observe that the Gluck twist can be isotoped to be supported on a small bicollar neighbourhood of $\{*\} \times S^2\subseteq S^1\times S^2$. Via this observation it becomes possible to define Gluck twist diffeomorphisms on arbitrary 3-manifolds, by twisting in a neighbourhood of an embedded 2-sphere.

\begin{figure}[htb]
\begin{tikzpicture}
\node at (0,0) {\includegraphics[width=.9\textwidth]{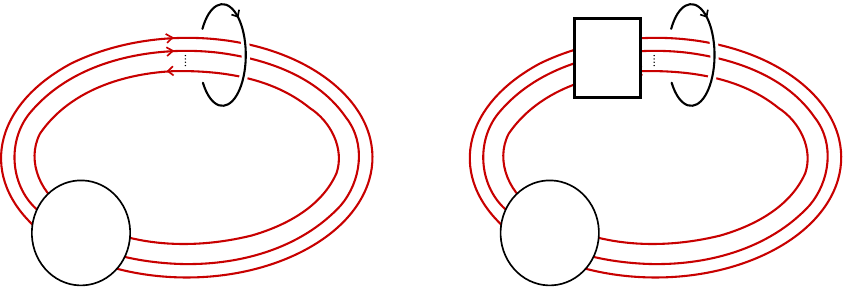}};
\node at (-6,-1.6) {\huge $P$};
\node at (2.28,-1.6) {\huge $P$};
\node at (-2.7,2.2) {$0$};
\node at (5.5,2.2) {$0$};
\node at (3.3,1.5) {\large $+1$};
\end{tikzpicture}
\caption{Left: A knot $K$ in $S^1\times S^2$.  Right: The image $\G(K)$ of $K$ under the Gluck twist $\G$.  The ``$+1$'' indicates one full right-handed twist.}\label{fig:Generic-Twist}\end{figure}

The mapping class group of $S^1\times S^2$ was shown to be $\Z/2\oplus \Z/2$\ by Gluck~\cite[Theorem 5.1]{Gluck-1962}, where the first $\Z/2$-factor is generated by the diffeomorphism given by reflection on both the $S^1$- and $S^2$-factors.
and the second is generated by the Gluck twist. The former reverses the orientation of the knot $S^1\times \{pt\}$. In contrast, the Gluck twist acts by the identity on homotopy classes of loops. Indeed, for knots in $S^1\times S^2$ with geometric winding number $0, 1$, or $2$, the Gluck twist preserves isotopy classes, since the effect of a Gluck twist is either trivial or can be undone by a slide (in particular, an isotopy) over a non-separating $2$-sphere (see Figure~\ref{fig:Wind-Num-2}). 
The same argument shows that for any even integer $w=2k$, the $(k,1)$-cable $J$ of the knot of Figure~\ref{fig:Wind-Num-2} has  winding number $w$ and is isotopic to $\G(J)$. Nevertheless, and in contrast to Theorem~\ref{thm:irred-rigid}, we show that there exist knots in $S^1\times S^2$ whose isotopy classes are changed under the Gluck twist.  We show a similar result for $\#^n (S^1\times S^2)$ in Corollary~\ref{cor:nS1xS2}.

\begin{figure}[htb]
\begin{tikzpicture}
\node at (0,0) {\includegraphics[width=.9\textwidth]{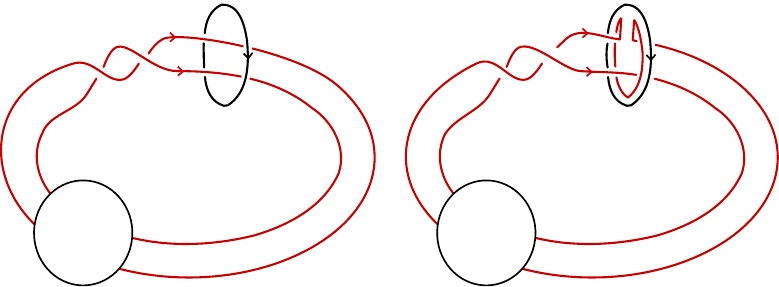}};
\node at (-5.8,-1.7) {\huge $P$};
\node at (1.9,-1.7) {\huge $P$};
\node at (-2.3,2.25) {$0$};
\node at (5.4,2.25) {$0$};
\end{tikzpicture}
\caption{Left: $\G(K)$ for some geometric winding number two knot $K$.  Right: A slide over the $0$-framed curve reduces $\G(K)$ back to $K$. }\label{fig:Wind-Num-2}
\end{figure}

\begin{theorem}\label{thm:glucktwist}
For each integer $w$, there exists a knot $K \subseteq S^1\times S^2$ with winding number $w$ such that $K$ is not isotopic to $\G(K)$.

Moreover, if $K\subseteq S^1\times S^2$ has odd winding number and is isotopic to $\G(K)$, then $K$ has geometric winding number $1$.\end{theorem}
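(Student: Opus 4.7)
The plan is to prove the ``moreover'' clause first and use it---together with a direct construction for even winding numbers---to obtain the existence statement.

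For the moreover, suppose $K$ has odd winding number $w$ and is isotopic to $\G(K)$ via an ambient isotopy $F_t$. Then $h:=\G^{-1}\circ F_1$ is a self-homeomorphism of the pair $(S^1\times S^2, K)$ whose class in $\Mod^+(S^1\times S^2)=\Z/2\oplus\Z/2$ equals $[\G]$. Let $n$ denote the geometric winding number of $K$, and choose a non-separating $2$-sphere $S$ with $|S\cap K|=n$. Since $w\neq 0$, the complement $(S^1\times S^2)\setminus\nu K$ is irreducible: any embedded $2$-sphere there is disjoint from $K$, hence null-homologous (because $[K]\cdot[\{\mathrm{pt}\}\times S^2]=w\neq 0$), hence bounds a ball on the side not containing $K$. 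Using this and an innermost-curve argument, $h$ may be isotoped within its mapping class so that $h(S)=S$ setwise. Cutting along $S$ presents $(S^1\times S^2, K)$ as the identity gluing of an $n$-strand tangle $(S^2\times I, T)$, and the condition $[h]=[\G]$ forces the induced boundary homeomorphism of $S^2$ to realize the nontrivial element of $\pi_1(\SO(3))=\Z/2$. Passing to the $|w|$-fold cyclic cover of $S^1\times S^2$---itself $S^1\times S^2$, in which the Gluck twist lifts to the Gluck twist precisely because $|w|$ is odd---reduces to an analogous statement about a winding-$1$ sublink, which is resolved by a light-bulb-type argument forcing $n=1$.

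For the existence statement, the odd case is a direct consequence of the moreover. When $|w|\geq 3$ is odd, every winding-$w$ knot automatically has geometric winding number at least $|w|\geq 3$, so by the moreover $K$ is not isotopic to $\G(K)$. When $w=\pm 1$, a winding-$1$ knot with geometric winding number $\geq 3$ can be exhibited directly---for instance by locally modifying $S^1\times\{\mathrm{pt}\}$ so that it is forced to meet every essential sphere in at least three points---and again the moreover applies. For even $w$ the cyclic-cover reduction fails (the Gluck twist lifts trivially to the $|w|$-fold cover), so a direct construction is required. My plan here is to present $K$ by a Kirby diagram $(L, U)\subset S^3$ with $U$ a $0$-framed unknot and $\lk(L, U)=w$, so that $\G(K)$ corresponds to $(L_\tau, U)$ where $L_\tau$ is obtained from $L$ by inserting a full positive twist through a disk bounded by $U$, and to distinguish $K$ from $\G(K)$ by a signature-type invariant of $L$---well-defined modulo isotopy in $S^3\setminus U$ and handle slides over $U$---computed for an explicit family such as twisted satellite patterns.

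The main obstacles are: (i) in the moreover, the sphere-preservation step (isotoping $h$ to fix $S$ setwise while remaining in the mapping class $[\G]$) followed by the tangle analysis and the light-bulb reduction in the cover; and (ii) in the even-$w$ case, verifying that the chosen invariant is simultaneously sensitive to the twist $L\mapsto L_\tau$ and invariant under handle slides over $U$.
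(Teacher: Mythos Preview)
Your proposal has genuine gaps in both halves.

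For the ``moreover'' clause, the sphere-preservation step does not go through as stated. Even granting irreducibility of the exterior (which is correct for $w\neq 0$), an innermost-circle argument in the complement of $K$ will at best make $S$ and $h(S)$ disjoint; it does not produce an isotopy of $h$ through pair-homeomorphisms carrying $h(S)$ to $S$. Once disjoint, $S$ and $h(S)$ cobound a product $S^2\times I$ containing $n$ arcs of $K$, and pushing $h(S)$ across to $S$ would require these arcs to form a trivial tangle---minimality of $n$ only rules out arcs with both endpoints on the same side, not knotting or braiding (compare non-uniqueness of bridge spheres). The subsequent cyclic-cover step is also unclear: in the $|w|$-fold cover $K$ lifts to a $|w|$-component link and each lifted sphere still meets that link in $n$ points total; no light-bulb mechanism is visible that converts this into $n=1$, and the appeal to $\pi_1(\SO(3))$ conflates a single boundary homeomorphism with a loop of diffeomorphisms. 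The paper's argument is entirely different and avoids all of this. The key observation (Proposition~\ref{prop:prelim}) concerns \emph{framings}: if $K$ is isotopic to $\G(K)$, then for every $f$ there is a self-homeomorphism of $(S^1\times S^2,K)$ carrying the $f$-framing on a fixed diagram to the $(f+w^2+2kw)$-framing, where $k\in\Z$ records the net number of handleslides in the isotopy. Restricting to the knot exterior gives a homeomorphism that is $(w^2+2kw)$ Dehn twists about the meridian on the boundary torus, and McCullough's theorem (Theorem~\ref{thm:mccullough}) then forces either $w^2+2kw=0$ or the meridian to bound a disk in the exterior, i.e.\ geometric winding number $1$. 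For odd $w$ the integer $w^2+2kw$ is odd, hence nonzero, and the moreover follows.

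For even $w$ your proposal is only a plan: you name no invariant, and the simultaneous requirements of handleslide-invariance and twist-sensitivity are precisely the difficulty---indeed Theorem~\ref{thm:Gluck-and-skein} shows that the Kauffman bracket skein module, an obvious candidate, fails. The paper again exploits Proposition~\ref{prop:prelim}, now as a source of self-homeomorphisms of surgered manifolds $M(D,f)$ with a prescribed nontrivial action on $H_1$. For the cable knots $K_w$ of Figure~\ref{fig:evenwinding} with $|w|\geq 4$ even, this becomes a self-homeomorphism of $S^3_{w^2}(T_{w,w+1})$ whose induced permutation of $\mathrm{Spin}^c$ structures is obstructed by an explicit computation of Heegaard--Floer correction terms (Propositions~\ref{prop:vitorus} and~\ref{prop:evenwinding}). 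The case $|w|=2$ uses a different explicit knot (Figure~\ref{fig:windingtwo}) and a SnapPy/Sage verification that the relevant surgery is hyperbolic with trivial symmetry group.
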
 

Observe that by the lightbulb trick (see e.g.~\cite[Exercise~9F4]{Rolfsen:1976-1}), there are only two isotopy classes of knots in $S^1\times S^2$ with geometric winding number 1.  The one with algebraic winding number $1$ is called the the \emph{Hopf knot} in~\cite{DNPR}.  The other is its reverse. 
 Since the only prime, reducible, oriented $3$-manifold is $S^1\times S^2$, Theorem~\ref{thm:glucktwist} and Theorem~\ref{thm:irred-rigid} together comprise the proof of Theorem~\ref{thm:main}.

For a knot $K\subseteq S^1\times S^2$, we may fix a diagram, such as in Figure~\ref{fig:Generic-Twist}. Such a diagram lies in $S^3$ and as such, has a preferred longitudinal framing. Then any framing of the diagram may be identified with an integer, by comparing with the preferred framing. The key observation for the proof of Theorem~\ref{thm:glucktwist} is that the Gluck twist and isotopies have differing effects on the framing of a given knot diagram. This is made precise in Proposition~\ref{prop:prelim}, which states that if $D$ is a diagram for a knot $K\subseteq S^1\times S^2$ with winding number $w$, such that $K$ is isotopic to $\G(K)$, then for any nonzero integer $f$, there is a 
diffeomorphism of $S^1\times S^2$ sending the $f$-framing of $D$ to the $(f+w^2+2kw)$-framing of $D$ for some integer $k$.  This observation along with a theorem of McCullough~\cite[Theorem~1]{McCullough06} regarding diffeomorphisms of $3$-manifolds restricting to nontrivial Dehn twists on the boundary implies that either $w^2+2kw=0$ or $K$ has geometric winding number $1$, proving the second half of Theorem~\ref{thm:glucktwist}.

For nonzero even winding numbers, the requirement that $w^2+2kw=0$ is not a contradiction. However, Proposition~\ref{prop:prelim} implies the existence of a non-trivial self-diffeomorphism of the manifold $M(D,f)$, for each $f$.  Here $M(D,f)$ is the result of Dehn surgery on $S^1\times S^2$ along $D$ with framing $f$.  For the knots obtained as $(2n,1)$ cables of the core curve $S^1\times \{pt\}\subseteq S^1\times S^2$, that is, as $(2n,1)$-cables of the Hopf knot, for $\lvert n\rvert>1$ (see Figure~\ref{fig:evenwinding}), we employ the Heegaard-Floer correction term defined by Ozsv\'{a}th and Szab\'{o}~\cite{Ozsvath-Szabo:2003-2} to obstruct the existence of such a diffeomorphism. For winding number $\pm 2$, we use SnapPy and Sage~\cite{SnapPy} to verify that for $D$ as in Figure~\ref{fig:windingtwo}, the manifold $M(D,1)$ is hyperbolic with no non-trivial self-diffeomorphisms. 

The arguments outlined in the preceding paragraphs are ineffectual when $w=0$.  In this case, we employ an approach using the satellite construction (Section~\ref{sec:zero}).  This idea was suggested to us by Charles Livingston after the preprint appeared on the arXiv.

\begin{remark}\label{rem:nonprime}
A key ingredient in our proof of Theorem~\ref{thm:freehomotopy}, and thereby Theorem~\ref{thm:main}, is that the map $\Phi\colon \Mod^+(M)\to \Out(\pi_1(M))$ is injective when $M$ is an irreducible $3$-manifold (Theorem~\ref{thm:mappingclass}). For general $M$, this is not the case, but the kernel of $\Phi$ is generated by Gluck twists along the essential spheres in $M$~\cite[Theorem~1.5]{mccullough:nonprime-mcg} (see also~\cite[Proposition~2.1]{hatcher-wahl:stabilization-mcg}). Laudenbach showed in~\cite{laudenbach:annals-1973,laudenbach:homotopie-et-isotopie} that if $S$ and $S'$ are homotopic $2$-spheres in a $3$-manifold, they are isotopic. So indeed the kernel of $\Phi$ is generated by the Gluck twists along (a) the connected sum spheres of $M$ and (b) the nonseparating spheres in each $S^1\times S^2$ summand of $M$. The case of nonseparating spheres could presumably be addressed as in the proof of Corollary~\ref{cor:nS1xS2}. Therefore it seems the main remaining challenge in trying to extend Theorem~\ref{thm:main} to reducible $3$-manifolds is the case of Gluck twists along separating $2$-spheres. In $\#^n S^1\times S^2$, such Gluck twists along separating $2$-spheres were shown to be isotopic to the identity by Laudenbach ~\cite{laudenbach:annals-1973,laudenbach:homotopie-et-isotopie}. But in general they are not even homotopic to the identity~\cite{hendriks:dehn-separating,friedman-witt:dehn-separating}.
\end{remark}

It is astonishing that Theorem~\ref{thm:glucktwist} can be proven by studying \textit{diffeomorphism} invariants of 3-manifolds, even though $K$ is sent to $\G(K)$ by a diffeomorphism. A more obvious approach to this problem is to begin with an isotopy invariant of knots which has no reason to be preserved under diffeomorphism.  One such invariant is the \emph{Kauffman bracket skein module} $\S(S^1\times S^2)$ associated to $S^1\times S^2$ \cite{Przytycki-1991}. This module is generated by 
unoriented framed links
in $S^1\times S^2$ and is explicitly computed in \cite{HoPr-1995}. Using their classification we prove that this invariant is unable to detect the difference between $K$ and $\G(K)$. Let $\K$ be a framed knot that corresponds to $K\subseteq S^1\times S^2$ when one forgets the framing. For an integer $f$, let $\K^f$ denote the framed knot obtained by adding $f$ full right-handed twists to the framing of $\K$. Observe that $\K$ and $\K^f$ have the same underlying knot type.

\begin{theorem}\label{thm:Gluck-and-skein}
Let $\K$ be a framed knot in $S^1\times S^2$ with geometric winding number $w$ and let $[\K]$ be its class in $\S(S^1\times S^2)$. If $w$ is even then $[\K] = [\G(\K)]$. If $w$ is odd, then $[\K]=[\G(\K)^f]$ for some $f\in \Z$.

In particular, the class in the Kauffman skein bracket module does not distinguish the knot type of $K$ from that of $\G(K)$.
\end{theorem}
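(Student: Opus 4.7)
The strategy is to reduce the claim to an explicit computation inside the Hoste--Przytycki presentation of $\S(S^1\times S^2)$~\cite{HoPr-1995}. They show that, with $\alpha = S^1\times\{N\}$ the core curve (where $N \in S^2$ is chosen to be the fixed point of the rotation defining $\G$), the classes $[\alpha^n]$ of $n$ parallel copies of $\alpha$ with product framing, for $n \geq 0$, generate $\S(S^1\times S^2)$ as a $\Z[A^{\pm 1}]$-module, and each $[\alpha^n]$ with $n \geq 1$ carries an explicit torsion annihilator.

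\emph{Parity grading.} First, observe that the Kauffman smoothings preserve the mod-$2$ geometric intersection number with a non-separating $2$-sphere, so $\S(S^1\times S^2)$ carries a $\Z/2$ grading by winding-number parity, and $[\alpha^n]$ lies in grading $n \bmod 2$. After isotoping $\K$ so that its $w$ intersections with $\{pt\}\times S^2$ cluster in a small disk around $N$, one expands $[\K] = \sum_i c_i [\alpha^{n_i}]$ in $\S(S^1\times S^2)$, with each $n_i \equiv w \pmod 2$.

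\emph{Action on generators.} The core calculation is to establish
\[
\G_*\bigl([\alpha^n]\bigr) = \begin{cases} [\alpha^n], & n \text{ even},\\ (-A^{3})^{f(n)}\,[\alpha^n], & n \text{ odd}, \end{cases}
\]
as elements of $\S(S^1\times S^2)$, for some integers $f(n)$. Since $\G$ fixes $\alpha$ pointwise while its differential rotates the transverse frame by $2\pi$ as $\theta$ traverses $S^1$, the image $\G(\alpha^n)$ is the union of the $n$ parallel copies of $\alpha$ with their framings shifted by $+1$, together with a full positive braid twist $\Delta_n^2$ inserted among them. Expanding $\Delta_n^2$ in the Temperley--Lieb algebra $TL_n$ via the Kauffman skein relation, closing up in $S^1\times S^2$, and iteratively applying the Hoste--Przytycki torsion relations to eliminate the lower-parity terms yields the displayed formula.

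\emph{Conclusion and main obstacle.} By $R$-linearity of $\G_*$, if $w$ is even then all $n_i$ are even, so $[\G(\K)] = \sum_i c_i [\alpha^{n_i}] = [\K]$. If $w$ is odd, then $[\G(\K)] = \sum_i c_i (-A^3)^{f(n_i)}[\alpha^{n_i}]$, and one uses the Hoste--Przytycki annihilators to absorb the $n_i$-dependence of $(-A^3)^{f(n_i)}$ into a single uniform framing factor $(-A^3)^f$, yielding $[\K] = (-A^3)^{-f}[\G(\K)] = [\G(\K)^{-f}]$. The main technical obstacle is precisely this: verifying the explicit $TL_n$ expansion of $\Delta_n^2$ closes up to a scalar multiple of $[\alpha^n]$ modulo the Hoste--Przytycki relations, and, in the odd case, that the various framing exponents $f(n_i)$ can be unified into a single shift of framing on $\G(\K)$.
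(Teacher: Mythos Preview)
Your outline follows the same high--level strategy as the paper: use the parity grading on $\S(S^1\times S^2)$, compute the action of $\G$ on a generating set, and in the odd case absorb the resulting framing factors into a single power of $-A^3$ using the Hoste--Przytycki torsion relations. So the architecture is right.

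What you are missing is the tool that dissolves your ``main technical obstacle.'' Rather than attacking $\G([\alpha^n])$ by a brute Temperley--Lieb expansion of the full twist $\Delta_n^2$, the paper (following Hoste--Przytycki) passes to the Chebyshev--type generators $e_0=1$, $e_1=z$, $e_i=ze_{i-1}-e_{i-2}$, which are already \emph{eigenvectors} of $\G$ in $\S(S^1\times D^2)$: one has $\G(e_i)=(-1)^i A^{i^2+2i}e_i$. Then a second change of basis to $e_i'=e_i+e_{i-2}'$ (with $e_j'=e_j$ for $j\le 2$) aligns the decomposition with the direct--sum splitting $\S(S^1\times S^2)\cong \Z[A^{\pm1}]\oplus\bigoplus_{i\ge1}\Z[A^{\pm1}]/(1-A^{2i+4})$, so that $e_i'$ generates the $i$th summand. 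A short induction using these two facts gives $\G(e_{2i}')=e_{2i}'$ and $\G(e_{2i+1}')=-A^{2i+3}e_{2i+1}'$. Since your parity argument shows $[\K]=\sum_{j\equiv w\ (2)} a_j e_j'$ with $j\le w$, the even case is immediate, and in the odd case one takes $f=\mathrm{lcm}\{2i+3:0\le i\le (w-1)/2\}$ (an odd integer) and checks $3f+2i+3\equiv 0\pmod{4i+6}$ for every such $i$, which yields $[\G(\K)^f]=[\K]$.

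In particular, your desired formula $\G_*([\alpha^n])=(-A^3)^{f(n)}[\alpha^n]$ for odd $n$ is true in $\S(S^1\times S^2)$, but it is a \emph{consequence} of the eigenbasis computation above rather than something one would want to establish by expanding $\Delta_n^2$ directly; and once you have the $e_i'$ basis there is no need to pass back to the $[\alpha^n]$ at all. The unification of the exponents $f(n_i)$ that worried you is exactly the lcm step just described.
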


\subsection*{Outline}
In Section~\ref{sec:propA} we prove Theorem~\ref{thm:PropertyA}. Section~\ref{sec:proof-irred} gives the proof of Theorem~\ref{thm:irred-rigid} using Theorem~\ref{thm:PropertyA}. In Section~\ref{sec:glucktwist} we establish Proposition~\ref{prop:prelim} and prove Theorem~\ref{thm:glucktwist} for odd winding numbers. The case of even, non-zero winding numbers are addressed in Section~\ref{sec:even}, and of zero winding number in Section~\ref{sec:zero}.
Theorem~\ref{thm:Gluck-and-skein} is proven in Appendix~\ref{appendixA}. 

\subsection*{Notation and conventions} We work in the smooth category, unless stated otherwise. Throughout this paper, all manifolds are assumed to be connected and either orientable or oriented. For any knot $K$ in $S^3$ and any integer $n \in \Z$, we denote by $S^3_n(K)$ the $3$-manifold obtained by $n$-framed Dehn surgery on $S^3$ along $K$. The symbol $\cong$ denotes either an isomorphism between groups or a diffeomorphism between manifolds.

\subsection*{Acknowledgments}
Much of this project was carried out while the authors were variously based at or visiting the Max Planck Institute for Mathematics and/or Georgia Tech. We thank both institutions for their hospitality and for providing an excellent environment for research. We also thank Stefan Friedl, Marco Golla, Tye Lidman, Lisa Piccirillo, Danny Ruberman, and Laura Starkston for helpful conversations. We thank Chuck Livingston for the idea of the proof of Theorem~\ref{thm:glucktwist} in the winding number zero case, as well as several helpful comments on the exposition. We are grateful to Yago Antol\'in for pointing out several references on Property A for acylindrically hyperbolic groups and conjugacy separability for 3-manifold groups. We are grateful to the anonymous referees for their thoughtful and detailed feedback.

The second author is supported by NSF grant DMS-1906269. The fourth author was partially supported by the Samsung Science and Technology Foundation (SSTF-BA2102-02) and by the NRF grant RS-2025-00542968.

\subsection*{Note} 
After hearing a talk on this paper, John Etnyre and Dan Margalit communicated to us that they have a different proof of Theorem~\ref{thm:main} which extends to the non-prime case.  Their proof is yet to appear. For the case of $\#^n S^1\times S^2$, see Corollary~\ref{cor:nS1xS2}.

\section{Property A of groups}\label{sec:propA}

The goal of this section is to prove Theorem~\ref{thm:PropertyA}, which we restate for the reader's convenience. 

\begin{reptheorem}{thm:PropertyA}Let $M$ be a closed, orientable $3$-manifold. Then $\pi_1(M)$ has Property~A.  
\end{reptheorem}
\begin{proof}
 Generalizing the work of Grossman~\cite{Gr75}, Minasyan and Osin showed that every torsion-free hyperbolic and relatively hyperbolic group has Property~A~\cite{MO10}. In particular, since a non-trivial free product is hyperbolic relative to its factors, every non-trivial free product has Property A. As mentioned in the introduction, this was originally proved by Neshchadim~\cite{Neshchadim:1996}. Since for $3$-manifolds $M, M'$, we have that $\pi_1(M\#M')\cong \pi_1(M)*\pi_1(M')$, this implies in particular that non-prime 3-manifold groups have Property A. Since $\Z=\pi_1(S^1\times S^2)$ has Property A, we may assume $M$ is irreducible.

Antol\'{i}n--Minasyan--Sisto \cite{AMS2016} generalized the results of~\cite{MO10} to the case of acylindrically hyperbolic groups. By definition, acylindrically hyperbolic groups admit an acylindrical action on a hyperbolic space.  From this action one can recover many hyperbolic--like properties of the group. In particular, they showed that acylindrically hyperbolic groups enjoy Property A.

By \cite[Corollary 2.9]{MO2015}, either $\pi_1(M)$ is acylindrically hyperbolic, Seifert-fibered or virtually polycyclic.  The class of acylindrically hyperbolic groups is disjoint from the other two classes and contains all non-prime 3-manifold groups as well as all those that have a non-trivial JSJ-decomposition but whose fundamental groups are not virtually polycyclic. If $\pi_1(M)$ is acylindrically hyperbolic then $\pi_1(M)$ has Property A by the result of Antol\'{i}n--Minasyan--Sisto \cite{AMS2016} quoted above. 

This leaves only two cases to check. The case of Seifert-fibered $M$ is addressed below in Theorem \ref{thm:SeifertA}. The virtually polycyclic but not Seifert-fibered case consists of those closed 3-manifolds supporting Sol-geometry.  Any such 3-manifold is either a torus bundle over the circle with Anosov monodromy or has a nontrivial double cover of that form (see e.g.\ \cite[Theorem 1.8.2]{Friedlbook}). We will refer to the former as an \emph{Anosov bundle} and the latter as an \emph{Anosov double}.  Anosov doubles are formed by gluing two twisted $I$-bundles over the Klein bottle together by a
 diffeomorphism along their common torus boundary. This case is addressed below in Proposition \ref{prop:AnosovA}. This completes the proof modulo the latter two quoted results, that is, Theorem \ref{thm:SeifertA} and Proposition \ref{prop:AnosovA}.
\end{proof}

First we address the case of Seifert fibered 3-manifolds.
Each Seifert-fibered 3-manifold fibers over a 2-dimensional orbifold.  In many cases, Property A is already known to hold:
\begin{theorem}[\cite{AKT2003, ATK:2009}]\label{thm:AKTseifert} If $G$ is a Seifert-fibered 3-manifold group whose base orbifold is not a sphere with precisely 3 cone points nor a torus with a single cone point, then $G$ has Property A.
\end{theorem}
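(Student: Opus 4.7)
The plan is to exploit the structure of $G = \pi_1(M)$ as a central extension. For a closed orientable Seifert-fibered 3-manifold, the fundamental group fits into a short exact sequence
$$1 \to \langle h \rangle \to G \to \pi_1^{\mathrm{orb}}(B) \to 1,$$
where $h$ is the homotopy class of a regular fiber and $B$ is the base 2-orbifold; after passing to an orientation cover of $B$ if necessary, $\langle h \rangle$ is central in $G$. Given a class-preserving automorphism $\phi$ of $G$, the element $\phi(h)$ is conjugate to the central element $h$ and hence equals $h$, so $\phi$ preserves $\langle h \rangle$ and descends to an automorphism $\bar\phi$ of $\pi_1^{\mathrm{orb}}(B)$. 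The descent is readily seen to remain class preserving: if $\bar g$ lifts to $g$ and $\phi(g) = ygy^{-1}$, then $\bar\phi(\bar g) = \bar y \bar g \bar y^{-1}$.

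Next I would invoke Property A for $\pi_1^{\mathrm{orb}}(B)$. Outside the two excluded families, $\pi_1^{\mathrm{orb}}(B)$ typically either contains a closed surface group of finite index (to which Grossman's theorem applies via a transfer-type argument) or splits nontrivially as an amalgamated free product or HNN extension, opening the way to Neshchadim-- or Minasyan--Osin-style arguments. The excluded orbifolds $S^2(p,q,r)$ and $T^2(p)$ are precisely the low-complexity Fuchsian cases where such decompositions are unavailable, which is why \cite{AKT2003, ATK:2009} must isolate them as exceptions. Assuming $\bar\phi$ is realized as conjugation by some $\bar k$, I would lift $\bar k$ to $k \in G$ and replace $\phi$ by $\phi'(g) = k^{-1}\phi(g)k$, which remains class preserving and now descends to the identity on $\pi_1^{\mathrm{orb}}(B)$.

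Finally, I would show that any class-preserving $\phi'$ acting trivially modulo $\langle h \rangle$ is inner. The assignment $\chi\colon g \mapsto \phi'(g)g^{-1}$ takes values in the central subgroup $\langle h \rangle$ and is readily checked to be a homomorphism $G \to \langle h \rangle \cong \Z$, hence factors through $G^{\mathrm{ab}}$. The class-preserving identity $\phi'(g) = ygy^{-1}$ passed to the abelianization forces $\chi(g) \in \langle h \rangle \cap [G,G]$. A presentation-level analysis using the standard Seifert presentation and the torsion relations $q_j^{\alpha_j} = h^{\beta_j}$ on the exceptional generators, together with the class-preserving condition applied to the surface generators, rules out nontrivial such $\chi$, yielding $\phi' = \Id$ and hence $\phi$ inner. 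The hard part will be the middle step: the boundary cases $S^2(p,q,r)$ and $T^2(p)$ are exactly those Fuchsian groups for which the standard hyperbolic/splitting arguments break down, and \cite{AKT2003, ATK:2009} must handle the complementary cases by a careful analysis of specific Fuchsian presentations rather than through a uniform geometric argument.
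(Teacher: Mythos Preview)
First, note that the paper does not prove this theorem: it is quoted from \cite{AKT2003, ATK:2009} and used as a black box, so there is no proof here to compare against directly. That said, your central-extension strategy is exactly what the paper codifies in Lemma~\ref{lem:CentralExtension} to handle the \emph{complementary} cases $S^2(p,q,r)$ and $T^2(n)$, so it is worth asking whether your sketch would actually cover the cases the cited theorem claims.

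There are genuine gaps. First, your invocation of Property~A for $\pi_1^{\mathrm{orb}}(B)$ is unjustified: a ``transfer-type argument'' from a finite-index surface subgroup to the ambient Fuchsian group is not a known mechanism, and Property~A is not in general inherited by finite extensions. For hyperbolic $B$ one can instead appeal to Minasyan--Osin directly (as the paper does in Proposition~\ref{prop:HypOrb}), but the Euclidean bases not on the excluded list---$T^2$, the Klein bottle, $S^2(2,2,2,2)$, $\mathbb{RP}^2(2,2)$, and so on---each need separate verification, and neither Grossman nor a free-product splitting applies uniformly to them. Second, your lifting step, showing that a class-preserving $\phi'$ trivial modulo $\langle h\rangle$ is the identity, is where the content lies, and ``presentation-level analysis'' hides the difficulty: when $h$ has finite order in $G^{\mathrm{ab}}$, the constraint $\chi(g)\in\langle h\rangle\cap[G,G]$ does not force $\chi=0$. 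The paper's Lemma~\ref{lem:CentralExtension} closes this cleanly via the extra hypothesis that the quotient be generated by elements with \emph{cyclic centralizer}, which forces the conjugating element to already commute with each generator; your abelianization argument lacks an analogous device. Finally, when $B$ is non-orientable the fiber class $h$ is not central (one has $ghg^{-1}=h^{-1}$ for orientation-reversing loops $g$), and ``passing to an orientation cover'' changes the manifold rather than the group, so the entire framework needs reworking in that case.
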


It remains to prove Property A when the base orbifold is a sphere with 3 cone points or a torus with 1 cone point.  For each of these, since the base is orientable, the 3-manifold group will be a central extension of the 2-dimensional orbifold group.

We will write $S^2(p,q,r)$ to denote the $2$-sphere with precisely three cone points of order $p,q,r\geq 2$, and $T^2(n)$ to denote the torus with a single cone point of order $n\geq2$.  We will denote the corresponding groups by $G_0(p,q,r)$ and $G_1(n)$, respectively. Recall that both $G_1(n)$ and $G_0(p,q,r)$ for $p,q,r,n\geq 2$ are non-elementary discrete subgroups of $\Isom^+(\Hy^2)$ {if  $\frac{1}{p}+\frac{1}{q}+\frac{1}{r}<1$}, and thus satisfy the following:
\begin{lemma} Let $\Gamma\leq \Isom^+(\Hy^2)$ be a non-elementary discrete group of isometries. Then:
\begin{enumerate}
\item The centralizer of every nontrivial element of $\Gamma$ is cyclic.
\item $\Gamma$ has trivial center.
\item $\Gamma$ does not contain nontrivial finite normal subgroups.
\end{enumerate}
\end{lemma}
\begin{proof} Point (1) follows from the classification of isometries of $\Isom^+(\Hy^2)=\PSL_2(\R)$ into elliptic, parabolic, and hyperbolic type (see, e.g. \cite[Chapter 2]{Katok-Fuchsian}). Any two commuting elements must have the same fixed point set in $\smash{\overline{\Hy}^2=\Hy^2\cup \partial_\infty \Hy^2}$, which is either a point in the interior of $\Hy^2$ (elliptic), a point on the boundary at infinity $\partial_\infty\Hy^2$ (parabolic), or a pair of points in $\partial_\infty\Hy^2$ (hyperbolic). In $\Isom^+(\Hy^2)$ the corresponding stabilizers are isomorphic to $\SO(2)$ (elliptic), $\R$ (parabolic), and $\R$ (hyperbolic). Since $\Gamma$ is discrete, in each case the centralizer must be cyclic. Statement (2) then follows since $\Gamma$ was assumed to be non-elementary hence  not itself virtually cyclic.  Statement (3) also follows from the fact that $\Gamma$ is non-elementary and that if $N\unlhd \Gamma$ is a finite normal subgroup then the centralizer $C_\Gamma(N)$ has finite-index in $\Gamma$.
\end{proof}
We thus reduce the proof of Property A for Seifert-fibered groups to the proof of Property A for 2-dimensional orbifolds via the following key lemma. 
\begin{lemma} \label{lem:CentralExtension}
Let $\Gamma$ be a group with center $Z(\Gamma)$. Suppose $\Gamma/Z(\Gamma)$ has Property A, trivial center, and is generated by elements with cyclic centralizer.  Then $\Gamma$ has Property A.  
\end{lemma}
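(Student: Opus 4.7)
Let $\phi\in\Aut(\Gamma)$ be class preserving; the goal is to show $\phi$ is inner, by descending to $G := \Gamma/Z(\Gamma)$, applying Property A there, and eliminating the residual ambiguity which lives in $Z(\Gamma)$. Since conjugacy classes of central elements are singletons, $\phi$ fixes $Z(\Gamma)$ pointwise and descends to an automorphism $\bar\phi$ of $G$, which is still class preserving. By Property A for $G$, $\bar\phi = c_{\bar\alpha}$ for some $\bar\alpha\in G$. Pick any lift $\alpha\in\Gamma$ of $\bar\alpha$ and replace $\phi$ by $\psi := c_\alpha^{-1}\circ\phi$. Then $\psi$ is a composition of class preserving automorphisms, hence class preserving, and $\bar\psi = \Id_G$; it suffices to prove $\psi = \Id_\Gamma$.

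Since $\bar\psi=\Id$, the function $\delta(g):=g^{-1}\psi(g)$ takes values in $Z(\Gamma)$, and a short calculation (using that the image of $\delta$ is central) shows $\delta\colon\Gamma\to Z(\Gamma)$ is a homomorphism. Moreover $Z(\Gamma)\subseteq\ker\delta$ because $\psi$ fixes $Z(\Gamma)$ pointwise. To conclude $\delta\equiv 1$, it therefore suffices to verify that $\delta$ vanishes on a lift to $\Gamma$ of each element of a generating set of $G$ consisting of elements with cyclic centralizer.

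This is the main step, and the place where the cyclic centralizer hypothesis really bites. Suppose $\bar g\in G$ has cyclic centralizer $C_G(\bar g)=\langle\bar y\rangle$. Since $\bar g$ commutes with itself, $\bar g\in\langle\bar y\rangle$, so $\bar g=\bar y^m$ for some integer $m$. Then any lifts $g,y\in\Gamma$ satisfy $g=y^m z$ for some $z\in Z(\Gamma)$, and consequently $y$ and $g$ commute in $\Gamma$. Now class preservation of $\psi$ gives $\psi(g)=hgh^{-1}$ for some $h\in\Gamma$, while $\bar\psi(\bar g)=\bar g$ forces $\bar h\in C_G(\bar g)=\langle\bar y\rangle$, so $h\in\langle y\rangle\cdot Z(\Gamma)$. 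Since $y$ commutes with $g$, so does $h$, and hence $\psi(g)=g$ and $\delta(g)=1$. Combining with the previous paragraph, $\delta$ vanishes identically, so $\psi=\Id$ and $\phi=c_\alpha$ is inner.

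The subtle point worth watching is exactly the upgrade in the third paragraph: a cyclic centralizer of $\bar g$ in $G$ yields a genuine commutation relation in $\Gamma$. Without cyclicity, one would only know $\bar h\in C_G(\bar g)$, and a generic lift of an element of $C_G(\bar g)$ need not commute with $g$ in $\Gamma$; the cyclicity forces $\bar h$ to be a power of a single element $\bar y$ all of whose lifts commute with $g$, precisely because $\bar g$ itself is already a power of $\bar y$. This is the only real obstacle in the argument and is cleanly resolved by the hypothesis.
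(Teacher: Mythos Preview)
Your proof is correct and follows essentially the same approach as the paper's: descend $\phi$ to $G=\Gamma/Z(\Gamma)$, use Property~A there to replace $\phi$ by $\psi=c_\alpha^{-1}\circ\phi$ inducing the identity on $G$, and then use the cyclic-centralizer hypothesis to show that any conjugator $h$ with $\psi(g)=hgh^{-1}$ actually commutes with $g$ in $\Gamma$ (since both $\bar h$ and $\bar g$ are powers of a common element, and this lifts to genuine commutation modulo the center). Your packaging via the homomorphism $\delta(g)=g^{-1}\psi(g)$ is a slightly cleaner way to organize the same computation the paper does directly on a fixed generating set, but the substance is identical.
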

\begin{remark}
As a result of the above lemma, if $\Gamma$ is a group with center $Z(\Gamma)$ and $G\le Z(\Gamma)$, such that $\Gamma/G$ has Property A, trivial center, and is generated by elements with cyclic centralizer, then $\Gamma$ has Property A. This follows since if $G\le Z(\Gamma)$ and $\Gamma/G$ has trivial center then $G=Z(\Gamma)$.
\end{remark}

\begin{proof}[Proof of Lemma~\ref{lem:CentralExtension}] Let $Z=Z(\Gamma)$ and let $q:\Gamma\rightarrow \Gamma/Z$ be the quotient map. Choose elements $\{x_i\}_{i\in I}\subseteq \Gamma$, where $I$ is an indexing set, such that $\{q(x_i)\}_{i\in I}$ generates $\Gamma/Z$ and each $q(x_i)$ has cyclic centralizer.

Suppose $\varphi:\Gamma\rightarrow \Gamma$ is class preserving. Clearly $\varphi|_Z=\text{Id}$.  Thus, $\varphi$ induces a class preserving automorphism $\overl{{\varphi}}:\Gamma/Z\rightarrow \Gamma/Z$.  Since $\Gamma/Z$ has Property A, $\overl{{\varphi}}$ is inner, i.e. $\overl{{\varphi}}=c_{\bar{g}}$ is conjugation by some $\bar{g}\in \Gamma/Z$.  As $\Gamma/Z$ has trivial center, $\Inn(\Gamma/Z)\cong \Gamma/Z\cong \Inn(\Gamma)$.  Choose any $g\in \Gamma$ such that $q(g)=\bar{g}$, and consider $\psi=c_g^{-1}\circ \varphi$. We know that $\psi$ preserves every coset $xZ$, $x\in \Gamma$, and acts by conjugation on $x$ for each $x\in \Gamma$. 
 
Fix $i\in I$. Then we have $\psi(x_i)=yx_iy^{-1}=x_iz$ for some $z\in Z$ and some $y\in \Gamma$.  Then $[y,x_i]\in Z$, hence $q(y)$, $q(x_i)$ commute.  Because the centralizer of $q(x_i)$ is cyclic, $q(x_i)$ and $q(y)$ are powers of the same element.  It follows that $y$ is in the centralizer of $x_i$ in $\Gamma$.  Thus, $yx_iy^{-1}=x_i$, and $\psi|_{x_iZ}=\text{Id}$.
Since the $q(x_i)$ generate $\Gamma/Z$, we conclude that $\psi=\text{Id}$, and that $\varphi$ is inner.
\end{proof}

Recall that 2-dimensional orbifolds are divided into 3 classes: spherical, Euclidean and hyperbolic.   For every $n\geq 2$, $T^2(n)$ is hyperbolic, but for the 2-sphere with 3 cone points we have the following trichotomy (see~\cite{Scott83})
\[\textrm{$S^2(p,q,r)$ is } \left\{\begin{array}{c}\textrm{spherical}\\ \textrm{Euclidean} \\\textrm{hyperbolic}\end{array}\right\} \textrm{ according as } \frac{1}{p}+\frac{1}{q}+\frac{1}{r} \textrm{ is }\left\{\begin{array}{c}>\\ =\\<\end{array}\right\} 1\]

For the spherical case, there is one infinite family and 3 exceptional cases, while for the Euclidean case there are only 3 cases. Everything else is hyperbolic:
\begin{itemize}
\item Spherical: $(2,2,n)$, $n\geq 2$; $(2,3,3)$; $(2,3,4)$; $(2,3, 5)$;
\item Euclidean: $(2,3,6)$; $(2,4,4)$; $(3,3,3)$;
\item hyperbolic: all others.
\end{itemize}

In order to deal with the hyperbolic orbifolds, we will rely on a result of Minasyan--Osin \cite{MO10}, concerning normal automorphisms of (relatively) hyperbolic groups.  Recall that an automorphism $\phi$ of a group $G$ is \emph{normal} if $\phi(N)=N$ for every $N\trianglelefteq G$. Observe that every class preserving automorphism of $G$ is normal since any normal subgroup of a group is a union of conjugacy classes.
For (relatively) hyperbolic groups, we have the following strengthening of Property A.  

\begin{theorem}[{\cite[Corollary 1.2(b)]{MO10}}]\label{thm:MOnormal}Suppose $G$ is a $($relatively$)$ hyperbolic group which is non-cyclic and which does not contain any non-trivial finite normal subgroups. Then every normal automorphism of $G$ is inner.  
\end{theorem}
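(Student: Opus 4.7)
The plan is to follow the strategy pioneered by Minasyan--Osin. Let $\phi\colon G\to G$ be a normal automorphism; I want to produce $x\in G$ with $\phi=c_x$. Since $G$ is non-cyclic and has no non-trivial finite normal subgroups, I may reduce to the case that $G$ is non-elementary (the elementary non-cyclic case, such as the infinite dihedral group, is handled by direct inspection).

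First I would invoke the abundance of loxodromic elements in a non-elementary (relatively) hyperbolic group without non-trivial finite normal subgroups: there exist infinitely many pairwise non-commensurable loxodromic elements $h\in G$ whose maximal elementary subgroup is $E(h)=\langle h\rangle$. The Ol'shanskii-style small cancellation theory, adapted to the relative setting, then implies that for sufficiently divisible $n$ the normal closure $\langle\langle h^n\rangle\rangle$ is free with basis the conjugates of $h^n$ by coset representatives of $E(h)$ in $G$, and moreover every nontrivial element of this normal closure has translation length at least that of $h^n$ on the (coned-off) Cayley graph.

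Second, because $\phi$ preserves every normal subgroup, $\phi(h^n)\in\langle\langle h^n\rangle\rangle$. Using the free-group structure together with translation-length rigidity and the bijectivity of $\phi$, one deduces that $\phi(h^n)=x h^{\pm n} x^{-1}$ for some $x\in G$, and hence $\phi(h)=x h^{\pm 1} x^{-1}$ by extracting roots inside the cyclic $E(h)$. The sign ambiguity is then removed by applying the same analysis to a product $h h'$ of two test elements with disjoint fixed-point pairs on the Bowditch boundary, which forces all signs to agree.

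Third, I would promote pointwise conjugation to global conjugation by a single element. Choose test elements $g_1, g_2$ with $E(g_1)\cap E(g_2)=\{1\}$, guaranteed by non-elementarity. The equalities $\phi(g_i)=x_i g_i x_i^{-1}$ together with $\phi(g_1 g_2)=y(g_1 g_2)y^{-1}$ force $x_1$ and $x_2$ to differ by an element centralizing both $g_1$ and $g_2$; by the intersection hypothesis this element is trivial. Since the set of such test elements generates $G$, a common conjugator $x$ emerges and $\phi=c_x$. The hard part is the second step: turning the bare statement that $\phi$ preserves $\langle\langle h^n\rangle\rangle$ into the rigid conclusion that $\phi(h)$ is a single conjugate of $h^{\pm 1}$. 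This rests on the technical small cancellation machinery (or, equivalently, the rotating-family approach of Dahmani--Guirardel--Osin) developed in Minasyan--Osin's paper, which is where the bulk of the work sits.
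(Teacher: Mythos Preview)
The paper does not give its own proof of this statement: Theorem~\ref{thm:MOnormal} is quoted directly from \cite[Corollary~1.2(b)]{MO10} and used as a black box in the proof of Proposition~\ref{prop:HypOrb}. So there is no ``paper's own proof'' to compare against.

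That said, your sketch is a fair outline of the argument actually carried out by Minasyan--Osin. The three-step architecture you describe---locate test loxodromics $h$ with $E(h)=\langle h\rangle$, use small cancellation over the normal closure $\langle\langle h^n\rangle\rangle$ to force $\phi(h)$ to be conjugate to $h^{\pm 1}$, then upgrade elementwise conjugation to a global conjugator via intersecting elementary closures---is precisely their strategy. A few of your intermediate claims are stated a bit loosely (for instance, the translation-length lower bound for elements of $\langle\langle h^n\rangle\rangle$ and the assertion that the test elements ``generate $G$'' both need more careful formulations; in \cite{MO10} one works instead with the fact that such elements are generic enough that a single conjugator on them pins down $\phi$ everywhere), but these are refinements rather than gaps. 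The substantive content---the small cancellation input that turns preservation of $\langle\langle h^n\rangle\rangle$ into the rigid conclusion $\phi(h^n)\sim h^{\pm n}$---is exactly what you identify as the hard step, and it is indeed the technical core of \cite{MO10}.
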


Note that this result is subsumed by the results of \cite{AMS2016}, but we only need (relative) hyperbolicity here. Using this theorem and Lemma \ref{lem:CentralExtension} we deduce 
\begin{proposition}\label{prop:HypOrb} If the base orbifold for a Seifert-fibered $M$ is hyperbolic, then $\pi_1(M)$ has Property A. 
\end{proposition}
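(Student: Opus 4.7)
The plan is to apply the remark following Lemma~\ref{lem:CentralExtension}. The Seifert fibration $M \to O$ with hyperbolic base orbifold $O$ (either $S^2(p,q,r)$ with $\frac{1}{p}+\frac{1}{q}+\frac{1}{r}<1$, or $T^2(n)$) induces a central extension
\[1 \to \langle h \rangle \to \pi_1(M) \to \pi_1^{\mathrm{orb}}(O) \to 1,\]
where $h$ is the regular fiber class. This class is central in $\pi_1(M)$ by the standard presentation of a Seifert-fibered $3$-manifold group. I would therefore set $G = \langle h \rangle \leq Z(\pi_1(M))$ and aim to verify that $\pi_1^{\mathrm{orb}}(O) = \pi_1(M)/G$ satisfies the three hypotheses of the remark: Property~A, trivial center, and generation by elements with cyclic centralizer.

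Since $O$ is a closed hyperbolic $2$-orbifold, $\pi_1^{\mathrm{orb}}(O)$ is a non-elementary cocompact Fuchsian group, and two of the three hypotheses are then standard. The center is trivial: a nontrivial central element $z$ would confine the whole group to the $1$-parameter subgroup of $\Isom^+(\Hy^2)$ fixing the fixed-point set of $z$, contradicting cocompactness. Similarly, every nontrivial $g \in \pi_1^{\mathrm{orb}}(O)$ has cyclic centralizer, since its centralizer in $\Isom^+(\Hy^2)$ lies in the $1$-parameter subgroup fixing the axis or unique fixed point of $g$, and the intersection of such a subgroup with any discrete subgroup of $\Isom^+(\Hy^2)$ is cyclic. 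In particular, any finite generating set consists of elements with cyclic centralizer.

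For Property~A of $\pi_1^{\mathrm{orb}}(O)$, I would invoke Theorem~\ref{thm:MOnormal}. The group $\pi_1^{\mathrm{orb}}(O)$ is a non-cyclic word-hyperbolic group, so it only remains to rule out nontrivial finite normal subgroups. Any finite subgroup of a discrete isometry group of $\Hy^2$ fixes a point, by the barycenter argument for finite isometry groups of $\mathrm{CAT}(0)$ spaces; a normal such subgroup would fix every point in the corresponding orbit, which is infinite in the cocompact setting, forcing the subgroup to be trivial. Theorem~\ref{thm:MOnormal} then implies that every normal---hence every class preserving---automorphism of $\pi_1^{\mathrm{orb}}(O)$ is inner, so $\pi_1^{\mathrm{orb}}(O)$ has Property~A. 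The remark following Lemma~\ref{lem:CentralExtension} then applies directly to conclude that $\pi_1(M)$ has Property~A. The main anticipated obstacle is simply the correct assembly of these standard facts about cocompact Fuchsian groups; no new ingredient appears to be required.
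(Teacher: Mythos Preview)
Your approach is essentially identical to the paper's: reduce to the orbifold fundamental group via the central extension, verify the Fuchsian-group hypotheses (trivial center, cyclic centralizers, no finite normal subgroups), apply Theorem~\ref{thm:MOnormal} to get Property~A for the quotient, and then invoke Lemma~\ref{lem:CentralExtension}.

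There is one gap worth flagging. Your parenthetical ``(either $S^2(p,q,r)$ with $\frac{1}{p}+\frac{1}{q}+\frac{1}{r}<1$, or $T^2(n)$)'' reads as though these exhaust the hyperbolic $2$-orbifolds, which is false---any closed surface of genus $\geq 2$, any surface with enough cone points, and many non-orientable orbifolds are hyperbolic as well. The paper handles this by first invoking Theorem~\ref{thm:AKTseifert}, which already establishes Property~A for Seifert-fibered groups over all base orbifolds \emph{except} $S^2(p,q,r)$ and $T^2(n)$; only then does it treat these two remaining families. This reduction is not cosmetic: for a non-orientable hyperbolic base the fiber class $h$ is normal but \emph{not} central (orientation-reversing loops conjugate $h$ to $h^{-1}$), so your central-extension argument via Lemma~\ref{lem:CentralExtension} would not apply directly there. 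Once you insert the appeal to Theorem~\ref{thm:AKTseifert} at the start, your proof is complete and matches the paper's.
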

\begin{proof}By Theorem \ref{thm:AKTseifert}, the only remaining cases are when $M$ fibers over $T^2(n)$, $n\geq 2$ or $S^2(p,q,r)$ with $\frac{1}{p}+\frac{1}{q}+\frac{1}{r}<1$. Each of these base spaces is orientable, hence $\pi_1(M)$ is a central extension of either $G_1(n)$ or $G_0(p,q,r)$ by $\Z$.

 As noted earlier, both $G_1(n)$ and $G_0(p,q,r)$ are nonelementary, discrete subgroups of $\Isom^+(\Hy^2)$, and so have trivial center, no finite normal subgroups and the centralizer of every nontrivial element is cyclic. By Theorem \ref{thm:MOnormal}, both $G_1(n)$ and $G_0(p,q,r)$ have Property A.  Hence, $\pi_1(M)$ verifies the hypotheses of Lemma \ref{lem:CentralExtension},  so we conclude that $\pi_1(M)$ has Property A, as desired.
\end{proof}

It remains to prove Property A for the spherical and Euclidean cases, each of which corresponds to a sphere with 3 cone points. We refer the reader to \cite[Chapter II.4]{Mag74} for the basic facts about triangle groups and their index two counterparts, von Dyck groups, that we will use here.  The latter are the orbifold fundamental groups of spheres with 3 cone points. The presentation of such a group is 
\begin{equation}\label{eqn:pres of G0}G_0(p,q,r)=\langle x,y,z \mid x^p=y^q=z^r=xyz=1\rangle.\end{equation}

\begin{proposition}\label{prop:EucOrb} If the base orbifold for a Seifert-fibered $M$ is Euclidean, then $\pi_1(M)$ has Property A.
\end{proposition}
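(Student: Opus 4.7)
The plan is to apply Lemma~\ref{lem:CentralExtension} exactly as in the proof of Proposition~\ref{prop:HypOrb}, now to the central extension $1\to\Z\to\pi_1(M)\to G_0(p,q,r)\to 1$ determined by the Seifert fibering. Since the base is Euclidean, $(p,q,r)$ is one of $(2,3,6)$, $(2,4,4)$, $(3,3,3)$, with point group cyclic of order $n\in\{6,4,3\}$ respectively. So I must check that $G_0(p,q,r)$ has trivial center, admits a generating set of elements with cyclic centralizer, and has Property~A.

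For the first two conditions I use the semidirect product description $G_0(p,q,r)\cong T\rtimes_\rho\Z/n$, where $T\cong\Z^2$ is the translation lattice and $\rho$ is the rotation generating the point group. Since $\rho$ has no nonzero fixed vectors in $T$, any central element must lie in $T$ and be fixed by $\rho$, hence the center is trivial. The standard generators $x,y,z$ are rotations about cone points; by the usual conjugation formula in $\Isom^+(\E^2)$, the centralizer of each is the full point-stabilizer in $G_0(p,q,r)$, which is cyclic of order $p$, $q$, or $r$.

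The main step is Property~A. Since the groups $G_0(p,q,r)$ here are virtually abelian and not relatively hyperbolic, Theorem~\ref{thm:MOnormal} does not apply, and I will argue directly in $T\rtimes_\rho\Z/n$. A short calculation gives two key facts: the conjugacy class of a translation $(w,0)$ equals its $\rho$-orbit $\{(\rho^k w,0):k\in\Z/n\}$, and the conjugacy class of the rotation generator $r=(0,1)$ equals $\{((1-\rho)v,1):v\in T\}$. Given a class preserving automorphism $\phi$, the first fact forces $\phi(w)=\rho^{k(w)}w$ for every $w\in T$. Setting $V_k:=\ker(\phi-\rho^k)\subset\Q^2$, each $V_k$ is a $\Q$-subspace, and their union contains $\Z^2$ and therefore equals all of $\Q^2$. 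Since $\Q^2$ is not a finite union of proper subspaces, some $V_k$ is all of $\Q^2$, so $\phi|_T=\rho^{k_0}$ for a fixed $k_0$. After composing with conjugation by a lift of $\rho^{-k_0}$, I may assume $\phi|_T=\Id$. The second fact then forces $\phi(r)=((1-\rho)v_0,1)$ for some $v_0\in T$, and a direct check shows that conjugation by $(v_0,0)\in T$ acts trivially on $T$ and sends $r$ to $((1-\rho)v_0,1)$; since $T\cup\{r\}$ generates $G_0(p,q,r)$, this inner automorphism equals $\phi$.

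The main obstacle I expect is the passage from the pointwise constraint $\phi(w)\in\{\rho^k w\}$ on $\Z^2$ to a uniform rotation $\phi|_T=\rho^{k_0}$; the linear-algebra observation that $\Q^2$ is not a finite union of proper subspaces is what substitutes for the (relative) hyperbolicity used in the analogous step for hyperbolic base orbifolds. Once that is in hand, the coincidence of the $T$-conjugacy orbit of $r$ with the full $G$-conjugacy class of $r$ (a special feature of the split structure) makes the second half of the argument essentially automatic.
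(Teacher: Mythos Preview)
Your proof is correct and takes a genuinely different route from the paper. Both proofs reduce via Theorem~\ref{thm:AKTseifert} and Lemma~\ref{lem:CentralExtension} to establishing Property~A for $G_0(p,q,r)$ with $(p,q,r)\in\{(2,3,6),(2,4,4),(3,3,3)\}$, and both verify trivial center and cyclic centralizers of generators from the Euclidean action. The difference lies in the Property~A step. The paper argues case by case: in each of the three tessellations it fixes one generating rotation, parametrizes the orbits of the other two cone points explicitly in lattice coordinates, imposes the relation $T_0T_1T_\omega=1$ to cut the parameters down, and then uses an area/index comparison of fundamental triangles to pin the automorphism down to an inner one. Your argument is uniform and structural: you work in the semidirect product $T\rtimes_\rho\Z/n$, use that the conjugacy class of a translation is its $\rho$-orbit to force $\phi|_T\in\langle\rho\rangle$ via the ``$\Q^2$ is not a finite union of proper subspaces'' trick, and then use that the conjugacy class of $r=(0,1)$ is exactly $\{((1-\rho)v,1):v\in T\}$---which coincides with the $T$-conjugation orbit of $r$---to finish. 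Your approach avoids the three separate coordinate computations and the area argument entirely, at the cost of invoking the crystallographic splitting and the linear-algebra lemma; the paper's approach is more hands-on and makes the geometry of each tessellation visible. One small point worth making explicit in your write-up: the reason $\phi$ preserves $T$ is that $T$ is characteristic (e.g., it is the set of infinite-order elements together with the identity, or the unique maximal abelian normal subgroup), and the reason a central element must lie in $T$ is that every nontrivial power $\rho^k$ has no nonzero fixed vectors on $T$ for $n\in\{3,4,6\}$.
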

\begin{proof} By Theorem \ref{thm:AKTseifert}, the only remaining case is when $M$ fibers over a Euclidean $S^2(p,q,r)$. Since $S^2(p,q,r)$ is orientable, $\pi_1(M)$ is a central extension of $G_0(p,q,r)$ by $\Z$. Further, the group $G_0(p,q,r)$ is an index 2 subgroup of the corresponding triangle group, and acts faithfully on the tessellation of $\R^2$ by triangles with angles $(\frac{\pi}{p},\frac{\pi}{q},\frac{\pi}{r})$.  We will exploit this action to verify the hypotheses of Lemma \ref{lem:CentralExtension}. 
The generators $x$, $y$, and $z$ appearing in the presentation \eqref{eqn:pres of G0} act by counterclockwise rotations by $\frac{2\pi}{p}$, $\frac{2\pi}{q}$, and $\frac{2\pi}{r}$ respectively about the three vertices of a fixed triangle in the tessellation.
  The sphere $S^2(p,q,r)$ is the union of two triangles in the tessellation with angles $(\frac{\pi}{p},\frac{\pi}{q},\frac{\pi}{r})$ and thus a fundamental domain for the action is the union of any two triangles that share an edge. 

$G_0(p,q,r)$ is a discrete subgroup of $\Isom^+(\R^2)=\R^2\rtimes \SO(2)$,  and every finite subgroup 
 therefore injects under the projection $G_0(p,q,r)\rightarrow \SO(2)$.  Each finite subgroup is therefore cyclic and 
stabilizes some point in $\R^2$.
We claim that this point is a vertex of the tessellation.  Indeed, if this point were on an edge of the tessellation, then each element $\sigma\in G$ would permute the vertices at the ends of this edge. Since the orbits of the vertices of a triangle in the tessellation are disjoint, $\sigma$ fixes this edge pointwise.  As $\sigma$ is an orientation-preserving isometry of the plane, $\sigma$ is the identity.  A similar argument applies if the fixed point in interior to a triangle.
 The fact that there are 3 orbits of vertices now implies that there are exactly 3 conjugacy classes of maximal, finite cyclic subgroups.

If we identify $\R^2$ with $\CC$, in each case the vertices will be the points of a lattice $\Lambda\subseteq \CC$, and each element of $G_0(p,q,r)$ can be thought of as an affine transformation of the form $z\mapsto az+b$, where $a, b\in \CC$ and $a\overline{a}=1$. If $g\colon z\mapsto az+b$ is nontrivial and has nonempty fixed set, then $a\neq 1$ and hence {$\text{Fix}(g)=b/(1-a)\in \CC$} is the unique fixed point. It follows that if two nontrivial elements $g,h$ commute then either they are both translations, hence have no fixed points, or they have the same fixed point. Indeed, if $\text{Fix}(h)=z_0$ then $g(z_0)=g(h(z_0))=h(g(z_0))$, so $g(z_0)$ is fixed by $h$ and therefore $g(z_0)=z_0$.  We conclude that $G_0(p,q,r)$ has trivial center and that the centralizers of the the three generators are cyclic. Thus, $\pi_1(M)$ satisfies two of the hypotheses of Lemma \ref{lem:CentralExtension}, hence to prove $\pi_1(M)$ has Property A it suffices to show that $G_0(p,q,r)$ has Property A.\\

\begin{figure}[htb]
\includegraphics[width=.5\textwidth]{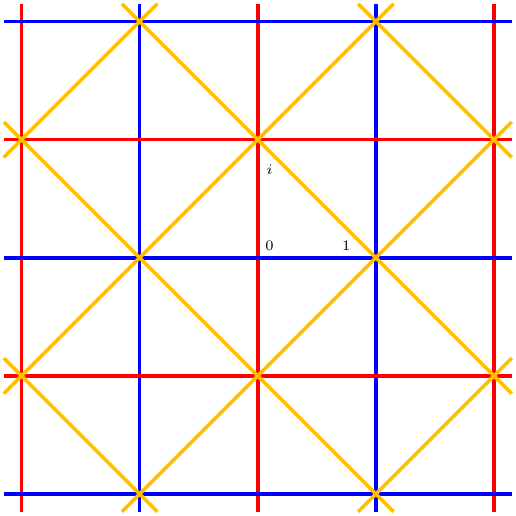}
\caption{The tessellation of $\R^2$ by $(\frac{\pi}{2},\frac{\pi}{4},\frac{\pi}{4})$-triangles. }\label{fig:244}
\end{figure}

\noindent
\textbf{(2,4,4):} In this case $\Lambda=\Z[i]$, and we choose our base triangle to be the one with vertices $0$, $1$, $i$. See Figure \ref{fig:244}. The generators are then given by rotations about $0$, $1$, and $i$, respectively. They have the form 
\begin{eqnarray*}T_0&\colon& z\mapsto -z\\
T_1&\colon& z\mapsto i(z-1)+1=iz+(1-i)\\
T_i&\colon& z\mapsto i(z-i)+i=iz+(1+i)
\end{eqnarray*}

Suppose $\phi:G_0(2,4,4)\rightarrow G_0(2,4,4)$ is class preserving.  After post-composing with an inner automorphism, we may assume that $\phi(T_0)=T_0$.  Since $\phi(T_1)$ is conjugate to $T_1$, $\phi(T_1)$ is a counterclockwise rotation by $\frac{\pi}{2}$ at another vertex in the orbit of the point $1$. The orbit of 1 is all of the points with $x$ coordinate odd and $y$ coordinate even, i.e., all of the points of the form $(2a+1) +(2b)i$ for some $a,b\in \Z$.
We can thus assume that \[\phi(T_1)\colon z \mapsto iz+(2a+2b+1)+ (2b-2a-1)i.\]
Similarly, $\phi(T_i)$ is a counterclockwise rotation by $\frac{\pi}{2}$ at another vertex in the orbit of the point $i$, which are all the points of the form $(2c)+(2d+1)i$ for some $c,d\in\Z$. Hence \[\phi(T_i)\colon z\mapsto iz+(2c+2d+1)+ (2d-2c+1)i.\]
The fact that $\phi$ is an automorphism
 means that \[\phi(T_0)\circ\phi(T_1)\circ\phi(T_i)=T_0\circ\phi(T_1)\circ\phi(T_i)=\Id.\]  Since $T_0^{-1}=T_0$, we therefore must have 
\begin{eqnarray*}-z&=&\phi(T_1)\circ\phi(T_i)(z)\\
&=&\phi(T_1)(iz+(2c+2d+1)+ (2d-2c+1)i)\\
&=& i(iz+(2c+2d+1)+ (2d-2c+1)i)+(2a+2b+1)+ (2b-2a-1)i\\
&=& -z+(2a+2b+1-2d+2c-1)+ (2c+2d+1+2b-2a-1)i\\
&=& -z+ 2(a+b-d+c) +2i(c+d+b-a)
\end{eqnarray*}
We thus obtain that \begin{eqnarray*}
a+b-d+c&=&0\\
c+d+b-a &=&0
\end{eqnarray*}
It follows that $d=a$ and $c=-b$. Substituting, we see that $\phi(T_i)$ is a rotation about the point $(-2b,2a+1)$. The points $(0,0)$, $(2a+1,2b)$, $(-2b,2a+1)$ are the vertices of an isosceles right triangle which is one half of a fundamental domain for the action of $\langle T_0,~\phi(T_1),~ \phi(T_i)\rangle$ on $\R^2$.
The ratio of the area of this triangle to the one with vertices 0, 1, and $i$ is the index of $\langle T_0,~\phi(T_1),~ \phi(T_i)\rangle$. Since $T_0,~\phi(T_1),~ \phi(T_i)$ generate the whole group, that triangle has area $\tfrac{1}{2}$. This is only possible if
$a=0,~b=0$ or $a=-1,~b=0$. In the first case, $\phi$ is the identity.  In the second case, $\phi$ is conjugation by $T_0$. Hence $\phi$ is inner, as desired.  \\

\begin{figure}[htb]
\includegraphics[width=.5\textwidth]{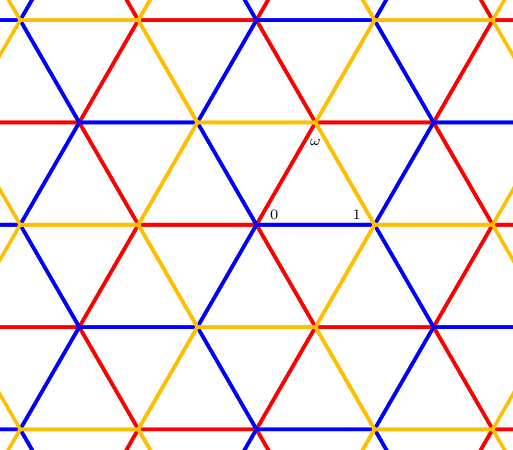}
\caption{The tessellation of $\R^2$ by $(\frac{\pi}{3},\frac{\pi}{3},\frac{\pi}{3})$-triangles. }\label{fig:333}
\end{figure}

\noindent
\textbf{(3,3,3):} In this case $\Lambda=\Z[\omega]$, where $\omega=e^{\frac{\pi i}{3}}$ and we choose our base triangle to be the one with vertices $0$, $1$, $\omega$. See Figure~\ref{fig:333}. Recall that $\omega$ satisfies $\omega^2-\omega+1=0$ and thus $\omega^3=-1$. The generators are 
\begin{eqnarray*}T_0&\colon& z\mapsto \omega^2z\\
T_1&\colon& z\mapsto \omega^2(z-1)+1=\omega^2z+(1-\omega^2)=\omega^2z+ (2-\omega)\\
T_\omega&\colon& z\mapsto \omega^2(z-\omega)+\omega=\omega^2z+(1+\omega)
\end{eqnarray*}
 
Suppose $\phi$ is class preserving.  After post-composing with an inner automorphism, we may assume that $\phi(T_0)=T_0$.  The lattice points in the orbit of $1$ are all the points of the form $a+b\omega$,
such that $a+2b\equiv 1\mod 3$, and the lattice points in the orbit of $\omega$ are  all the points of the form $c+d\omega$, such that $c+2d\equiv 2 \mod 3$.  Since $\phi(T_1)$ is conjugate to $T_1$, $\phi(T_1)$ is a counterclockwise rotation by $\frac{2\pi}{3}$ at another vertex in the orbit of the point $1$. We can thus assume that \[\phi(T_1)\colon z \mapsto \omega^2z +(2a+b) +(b-a)\omega.\]
Similarly, $\phi(T_\omega)$ is a counterclockwise rotation by $\frac{2\pi}{3}$ at another vertex in the orbit of the point $\omega$, hence has the form  \[\phi(T_\omega)\colon  z \mapsto \omega^2z +(2c+d) +(d-c)\omega\]
The fact that $\phi$ is an automorphism means that \[\phi(T_0)\circ\phi(T_1)\circ\phi(T_\omega)=T_0\circ\phi(T_1)\circ\phi(T_\omega)=\Id.\] Since $T_0^{-1}(z)=\omega^4z$, we must have an equality 

\begin{eqnarray*}\omega^4z&=&\phi(T_1)\circ\phi(T_\omega)(z)\\
&=&\phi(T_1)(\omega^2z +(2c+d) +(d-c)\omega)\\
&=& \omega^2(\omega^2z +(2c+d) +(d-c)\omega)+(2a+b) +(b-a)\omega \\
&=& \omega^4z + \omega^2(2c+d) +(d-c)\omega^3+ (2a+b) +(b-a)\omega\\
&=& \omega^4z + (\omega-1)(2c+d)- (d-c)+ (2a+b) +(b-a)\omega\\
&=& \omega^4z + (2a+b-c-2d) +(2c+d+b-a)\omega
\end{eqnarray*}
We thus obtain that \begin{eqnarray*}
2a+b-c-2d&=&0\\
2c+d+b-a &=&0
\end{eqnarray*}
It follows that $d=a+b$ and $c=-b$. The points $0$, $a+b\omega$, $-b+(a+b)\omega$ form the vertices of an equilateral triangle which is one half the fundamental domain for the action of $T_0,\phi(T_1), \phi(T_\omega)$ on $\R^2$.  The only way this triangle can have area equal to $\frac{\sqrt{3}}{4}$ is if $(a,b) \in \{(1,0),(-1,1),(0,-1) \}$. But then $\phi$ is the identity, conjugation by $T_0$, or conjugation by $T_0^2$, respectively. \\ 

\begin{figure}[htb]
\includegraphics[width=.5\textwidth]{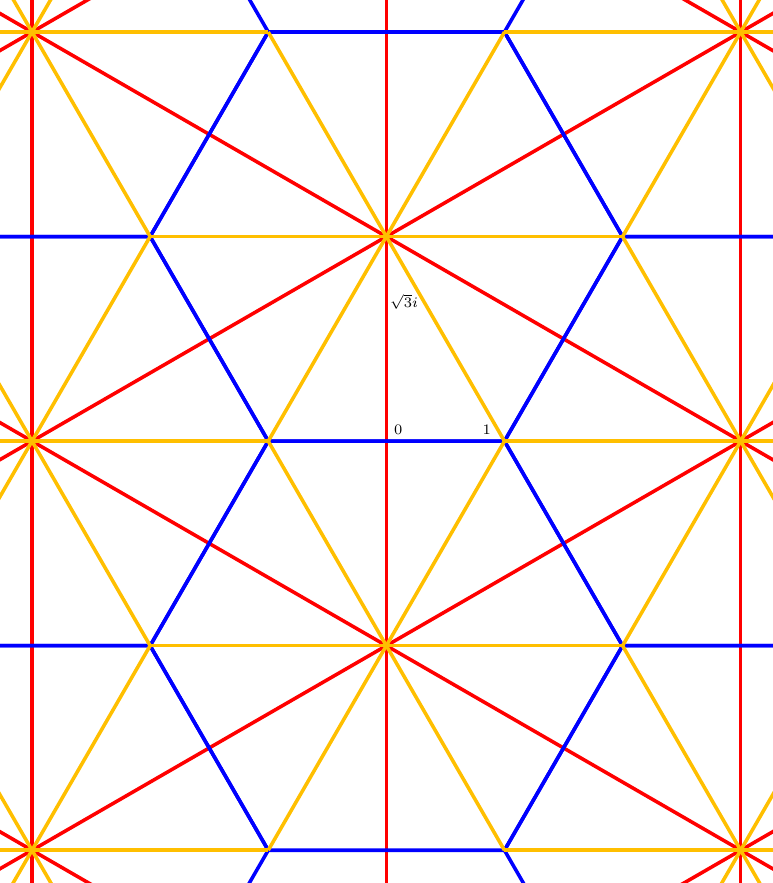}
\caption{The tessellation of $\R^2$ by $(\frac{\pi}{2},\frac{\pi}{3},\frac{\pi}{6})$-triangles. }\label{fig:236}
\end{figure}

\noindent
\textbf{(2,3,6):} In this case the vertices of the triangle are at $0$, $1$ and $i\sqrt{3}$, as in Figure~\ref{fig:236}. The generators are the rotation $T_0$ by $\pi$ at $0$, the rotation $T_1$ by $\frac{2\pi}{3}$ at $1$ and the rotation $T_{i\sqrt3}$ by $\frac{\pi}{3}$ at $i\sqrt{3}$, all counterclockwise. 

As in the previous cases, given a class preserving automorphism $\phi$, we can assume after postcomposing with an inner automorphism that $\phi(T_0)=T_0$. Then $\phi(T_1),\phi(T_{i\sqrt 3})$ are rotations about lattice points in the orbit of $1$ and $i\sqrt{3}$, respectively. We let $\smash{\xi=e^{\frac{\pi i}{6}}}$, a $12^{th}$ root of unity. Then $\xi^2=\omega$ is a $6^{th}$ root of unity.  The points in the orbit of $1$ are all of the form $ \pm 1+2ai\sqrt{3} +2b\xi\sqrt{3}$, while those in the orbit of $i\sqrt 3$ are of the form $(1+2c)i\sqrt3+2d\xi\sqrt{3}$, for some $a,b,c,d\in \Z$. Using the relations $i\sqrt{3}=2\omega-1$ and $\xi\sqrt{3}=\omega+1$ we rewrite these to get that a point in the orbit of 1 has the form $(\pm1-2a+2b)+(4a+2b)\omega$ and a point in the orbit of $i\sqrt3$ has the form $(-1-2c+2d)+(2+4c+2d)\omega$. We first consider the case where $\phi(T_1)$ is a rotation about a point of the form $(1-2a+2b)+(4a+2b)\omega$.  As above, since $T_0^{-1}=T_0$, we have
\begin{eqnarray*}
-z &=& \phi(T_1) \circ \phi(T_{i\sqrt{3}})(z) \\
&=& \phi(T_1)\Big( \omega(z - (-1-2c+2d)-(2+4c+2d)\omega) + (-1-2c+2d)+(2+4c+2d)\omega \Big) \\
&=& \phi(T_1)\Big(\omega z+(1+2c+4d)+(1+2c-2d)\omega\Big)\\
&=& \omega^3z + (1 + 2c+4d)\omega^2+(1+2c-2d)\omega^3-(1-2a+2b)\omega^2 -(4a+2b)\omega^3\\
&&\quad +~ (1-2a+2b)+(4a+2b)\omega\\
&=& -z + (6b-4c-2d)+(6a+2c+4d)\omega
\end{eqnarray*}

Therefore we must have that $3b-2c-d=0$ and $3a+c+2d=0$. Solving for $d$ and $c$ we obtain $c=a+2b$ and $d=-2a-b$. Therefore, $T_0$, $\phi(T_1)$ and $\phi(T_{i\sqrt3})$ are rotations by $\pi$, $\frac{\pi}{3}$, and $\frac{\pi}{6}$, respectively, about the corners of a triangle with vertices at $0$, $(1-2a+2b)+ (4a+2b)\omega$, and $(-1-6a-6b)+(2+6b)\omega$. Twice the area of this triangle (and hence the area of a fundamental domain for the action) is \[(1+3b)^2\sqrt{3}+(b+2a)^23\sqrt3,\]  which is only equal to $\sqrt3$ when $a=b=0$.
We conclude that $\phi(T_0)=T_0$, $\phi(T_1)=T_1$ and $\phi(T_{i\sqrt3})=T_{i\sqrt3}$ so $\phi=\Id$, as desired. The other case, where $\phi(T_1)$ has the form $(-1-2+2b)a+(4a+2b)\omega$, is similar but the conclusion is that $\phi$ is conjugation by $T_0$, hence inner. This concludes the proof for all 3 cases, and the proposition follows.
\end{proof}

Next, we turn to the spherical case.

\begin{proposition}\label{prop:SphereOrb}If the base orbifold for a Seifert-fibered $M$ is spherical, then $\pi_1(M)$ has Property~A.
\end{proposition}
\begin{proof}If the base orbifold is spherical, then $M$ supports $\Sa^3$-geometry. Thus $\pi_1(M)$ is a finite subgroup of $\SO(4)$ which acts freely on $\Sa^3$. These are all of the form $G\times \Z/k$ where $k$ is coprime to $|G|$, and $G$ is either trivial or on the following list (see \cite[Section~1.7]{Friedlbook}):
\begin{enumerate}
\item $Q_{8n}=\langle x, y~|~x^2=(xy)^2=y^{2n}\rangle,$ $n\geq 1$

\item $P_{48}$, $P_{120}$

\item $D_{2^m(2n+1)}=\langle x, y~|~x^{2^m}=y^{2n+1}=1, ~xyx^{-1}=y^{-1}\rangle,$ $m\geq 2,$ $n\geq1$

\item $P'_{8\cdot3^m}=\langle x, y,z ~|~x^2=(xy)^2=y^2, ~zxz^{-1}=y, ~zyz^{-1}=xy,~ z^{3^m}=1\rangle,$ $m\geq 1$

\end{enumerate}

The groups $P_{48}$ and $P_{120}$ correspond to the orbifolds $S^2(2,3,4)$ and $S^2(2,3,5)$, respectively.  The groups are central $\Z/2$-extensions of $S_4$ and $A_5$.  The  outer automorphism group of $S_4$ is trivial and the outer automorphism group of $A_5$ is $\Z/2$ generated by conjugation by a transposition. There are two conjugacy classes of 5-cycles in $A_5$ but only one in $S_5$.  The non-trivial outer automorphism of $A_5$ exchanges these two, hence is not class preserving.  Thus, both $S_4$ and $A_5$ have Property A. For $n\geq 3$, $A_n$ is generated by 3-cycles, and when $n\leq 5$, each of these has cyclic centralizer.  To generate all of $S_4$, we add a 4-cycle, which also has cyclic centralizer.  Since both $S_4$ and $A_5$ have trivial center, Lemma \ref{lem:CentralExtension} applies, so we conclude that $P_{48}$ and $P_{120}$ also have Property A.

Recall that $A_4$ admits the presentation 
$\langle x, z\mid x^2=z^3=(xz)^3=1\rangle$. From this presentation,
  one can see that the subgroup generated by $z^3$ and $x^2$ is central, and therefore normal. Thus, there is a short exact sequence \[1\rightarrow \left\langle z^3 , x^2\right\rangle\rightarrow P'_{8\cdot3^m}\rightarrow A_4\rightarrow 1.\]
Since the center of $A_4$ is trivial, it is easy to verify that $\langle z^3,x^2\rangle$ is the center of $P'_{8\cdot3^m}$. The outer automorphism group of $A_4$ is $\Z/2$ generated by conjugation by a transposition in $S_4$.  There are two conjugacy classes of 3-cycles in $A_4$, which are exchanged by this automorphism. Hence this automorphism is not class preserving and $A_4$ has Property A. As in the $A_5$ case, $A_4$ is generated by 3-cycles, each of which have cyclic centralizer. We apply Lemma \ref{lem:CentralExtension} again to obtain that $P'_{8\cdot3^m}$ has Property A.

For any $m\geq 2, n\geq 1$, the center of  $D_{2^m(2n+1)}$ is generated by $x^2$. We prove the same for $Q_{8n}$.   It follows from the relation $x^2=y^{2n}$ that $x^2$ commutes with both generators $x$ and $y$, and so it is central. As observed in \cite[Section~1.7]{Friedlbook}, $Q_{8n}$ is an extension of $D_{4n}$ by $\Z/2$, $1\to\Z/2\to Q_{8n}\to D_{4n}\to 1$ where $Q_{8n}\to D_{4n}$ has kernel $\langle x^2\rangle$ and is defined by sending $x$ to the reflection and $y$ to the rotation, $\rho$, of order $2n$.  As the center of $D_{4n}$ is $\{1,\rho^{n}\}$, it follows that the center of $Q_{8n}$ is a subgroup of $\langle x^2, y^n\rangle$.  We need only show that $y^n$ is not central. We can use the relation $x^2=(xy)^2$ to see that $xy^nx^{-1} = y^{-n}$. Since $y^{2n}=x^2\neq 1$ $y$ is not order $2n$ and so cannot be central.

We now prove Property $A$ for $Q_{8n}$ and $D_{2^m(2n+1)}$ directly.
 From the relation $x^2=xyxy$ we obtain $xyx^{-1}=y^{-1}$ for $Q_{8n}$, while this relation is explicit in the presentation of $D_{2^m(2n+1)}$ above. Hence in each case the conjugacy classes for nontrivial powers of $y$ are exactly $\{\{y^i, y^{4n-i}\}\mid ~1\leq i\leq 2n\}$ for $Q_{8n}$ and $\{\{y^j, y^{2n+1-j}\}\mid~1\leq j\leq n\}$ for $D_{2^m(2n+1)}$.  Let $\phi:Q_{8n}\rightarrow Q_{8n}$ be class preserving. By composing $\phi$ with a conjugation we may assume $\phi(x)=x$. Then $\phi(y)=y$ or $\phi(y)=y^{-1}$; either way $\phi$ is inner.  The case for $D_{2^m(2n+1)}$ is completely analogous. 

Finally, since abelian groups have Property A and Property A is closed under taking products, we conclude that $\pi_1(M)= G\times \Z/k$ has Property A, where $G$ is one of the groups on the list above.  This completes the proof.
\end{proof}

Combining these results we obtain:
\begin{theorem}\label{thm:SeifertA}Let $\Gamma$ be the fundamental group of an orientable Seifert-fibered  $3$-manifold.
Then $\Gamma$ has Property A. 
\end{theorem}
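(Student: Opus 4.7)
The plan is essentially to assemble the theorem from the ingredients already in place, since all the substantive work has been carried out in Theorem~\ref{thm:AKTseifert} and Propositions~\ref{prop:HypOrb}, \ref{prop:EucOrb}, and~\ref{prop:SphereOrb}. Given any orientable Seifert-fibered $3$-manifold $M$, I would first invoke the fibering structure $M \to \mathcal{O}$, where $\mathcal{O}$ is the base $2$-orbifold. The underlying strategy is to dispatch every possibility for $\mathcal{O}$ using one of the four already-proven results.

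First, if $\mathcal{O}$ is neither a sphere with exactly three cone points nor a torus with one cone point, then Theorem~\ref{thm:AKTseifert} of Allenby--Kim--Tang immediately gives Property~A for $\pi_1(M)$. This subsumes, in particular, all cases in which $\mathcal{O}$ has non-empty boundary or is non-orientable, as well as all closed orientable base orbifolds outside the two exceptional families. So it remains only to handle $\mathcal{O}=S^2(p,q,r)$ and $\mathcal{O}=T^2(n)$.

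Next, I would split the remaining cases according to the geometry of $\mathcal{O}$. For $T^2(n)$ (which is always hyperbolic for $n\geq 2$) and for those $S^2(p,q,r)$ with $\tfrac1p+\tfrac1q+\tfrac1r<1$, Proposition~\ref{prop:HypOrb} delivers Property~A. For the three Euclidean sphere orbifolds $S^2(2,3,6)$, $S^2(2,4,4)$, $S^2(3,3,3)$, Proposition~\ref{prop:EucOrb} applies. Finally, for the spherical cases $S^2(2,2,n)$, $S^2(2,3,3)$, $S^2(2,3,4)$, $S^2(2,3,5)$, Proposition~\ref{prop:SphereOrb} applies. Since the list $\{$hyperbolic, Euclidean, spherical$\}$ exhausts the possibilities for a closed $2$-orbifold, every remaining $\mathcal{O}$ is covered.

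Because the entire proof is a routine case check against pre-established results, there is no genuine obstacle at this stage; the hard work lies in the propositions themselves, particularly the explicit lattice computations of Proposition~\ref{prop:EucOrb} and the finite-group case analysis of Proposition~\ref{prop:SphereOrb}. The only small verification I would make sure to state clearly is that, when $M$ is orientable, its base orbifold is allowed to be non-orientable, but such $\mathcal{O}$ is neither $S^2(p,q,r)$ nor $T^2(n)$, so it falls under Theorem~\ref{thm:AKTseifert}. This observation closes the argument.
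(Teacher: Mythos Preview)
Your proposal is correct and takes essentially the same approach as the paper. The paper's proof is even terser: it simply invokes the trichotomy that the base orbifold is hyperbolic, Euclidean, or spherical and then cites Propositions~\ref{prop:HypOrb}, \ref{prop:EucOrb}, and~\ref{prop:SphereOrb} directly, since each of those propositions already appeals to Theorem~\ref{thm:AKTseifert} internally to dispose of the non-exceptional base orbifolds; your version just unpacks that reduction one level higher.
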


\begin{proof} Let $M$ be a Seifert-fibered manifold. Then $M$ fibers over a hyperbolic, Euclidean, or spherical orbifold.  Thus $\pi_1(M)$ has Property A by one of Proposition \ref{prop:HypOrb}, \ref{prop:EucOrb}, or \ref{prop:SphereOrb}.  
\end{proof}

We now prove Property A for $M$ supporting Sol-geometry. Recall that in this case $M$ is either an Anosov bundle or an Anosov double.

\begin{proposition}\label{prop:AnosovA} Let $M$ be an Anosov bundle or an Anosov double.   Then $\pi_1(M)$ has Property A. 
\end{proposition}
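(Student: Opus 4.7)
The plan is to treat the Anosov bundle and Anosov double cases separately, exploiting in both the characteristic Fitting subgroup $F \cong \Z^2$ inside $G := \pi_1(M)$. The quotient $Q := G/F$ is $\Z$ in the bundle case and $D_\infty = \Z/2 * \Z/2$ in the double case (from the two Klein bottle factors); the conjugation action of $G$ on $F$ gives an injection $Q \hookrightarrow GL(2, \Z)$ whose image is $\langle A\rangle$ for an Anosov matrix $A$ (bundle), or $\langle A_1, A_2\rangle$ for two involutions with $A_1 A_2$ Anosov (double). In particular, the $G$-conjugacy class of any $v \in F$ equals the $Q$-orbit of $v$ under this action.

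For an Anosov bundle, given a class-preserving $\phi$, I would first compose with an inner automorphism so that $\phi(t) = t$ for a generator $t$ of $Q$ (possible since $\phi(t)$ is a $G$-conjugate of $t$). The restriction $B := \phi|_F$ then commutes with $A$, so lies in $\text{Cent}_{GL(2, \Z)}(A) = \langle u, -I\rangle$ for a fundamental unit $u$ of the real quadratic order $\Z[A]$, by Dirichlet's unit theorem. Class preservation forces $Bv \in \langle A\rangle v$ for every $v \in F$, and combined with the observation that $A^m = -I$ has no solution (since the eigenvalues of $A^m$ are real and cannot equal $-1$), this forces $B = A^n$ for some $n$. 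Hence $\phi$ is conjugation by $t^n$.

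For an Anosov double $G = K_1 *_F K_2$, I would first verify directly that $D_\infty$ has Property A, by enumerating its two conjugacy classes of reflections. Given a class-preserving $\phi$, the descended $\bar\phi$ on $D_\infty$ is thus inner, so after composing $\phi$ with an inner automorphism of $G$ we may assume $\phi$ preserves every $F$-coset. The restriction $B := \phi|_F$ commutes with both $A_1$ and $A_2$; since $\text{Cent}_{GL(2,\Z)}(A_1) \cap \text{Cent}_{GL(2,\Z)}(A_1 A_2) = \{\pm I\}$ with $-I$ ruled out as above, $B = \text{Id}$. Writing $\phi(a_i) = a_i w_i$ for each Klein bottle generator $a_i \in K_i$, class preservation of $a_i$ (using that the centralizer of $\bar a_i$ in $D_\infty$ is $\{1, \bar a_i\}$) forces $w_i \in (I - A_i) F$, and class preservation of $t := a_1 a_2$ forces $A_2 w_1 + w_2 \in (I - A) F$ with $A := A_1 A_2$. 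The main step, and the principal obstacle, is to show that these constraints precisely characterize the inner automorphisms --- the pairs of the form $(w_1, w_2) = ((A_1 - I) z, (A_2 - I) z)$ for some $z \in F$. Working in the basis $(a_2^2, b_2)$ of $F$, both the class-preservation condition and the inner condition reduce to the same congruence $m \equiv \nu k \pmod \delta$, where $\nu, \delta$ are entries of the unimodular Anosov gluing matrix $P$; these congruences agree because $\gcd(\nu, \delta) = 1$, a direct consequence of $\det P = \pm 1$.
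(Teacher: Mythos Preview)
Your approach is correct and shares the essential structure with the paper's proof, but you take a more circuitous route in both cases.

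For the Anosov bundle, the paper also arranges $\phi(t)=t$ and notes that $B:=\phi|_{\Z^2}$ commutes with $A$. Rather than invoking Dirichlet's unit theorem, however, the paper further composes with conjugation by a power of $t$ to arrange $\phi(x)=x$ for one basis vector $x\in\Z^2$ (possible precisely because $\phi(x)\in\langle A\rangle x$). Then $B$ fixes both $x$ and $Ax$; since $A$ is Anosov and $x$ is integral, these are linearly independent, so $B=I$ immediately. This bypasses the unit-group analysis entirely.

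For the Anosov double, the key simplification you are missing is this: rather than arranging only $\bar\phi=\mathrm{Id}$ on $D_\infty$, the paper uses class preservation of the generator $t_1$ itself (not just of its image in the quotient) to normalize $\phi(t_1)=t_1$ exactly. This single move does two things at once. First, it yields $\phi(t_1^2)=t_1^2$, a nonzero fixed vector of $C=\phi|_F$, which rules out $C=-I$ without any orbit argument. Second, it pins down $\phi$ on one Klein-bottle factor, so that once $C=I$ is established one already has $\phi$ equal to the identity on $t_1$, $x_1$, $x_2$, and $t_2^2$, leaving only $\phi(t_2)$ to determine. Your ``principal obstacle'' --- the coordinate computation matching the class-preservation constraints on $(w_1,w_2)$ against the inner automorphisms --- is a genuine step, and your outline via the congruence $m\equiv\nu k\pmod\delta$ is plausible, but the paper's stronger normalization absorbs most of that work up front. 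The lesson: exploit class preservation on individual generators, not merely on their images in $Q$, to fix them exactly; the endgame then collapses.
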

\begin{proof}Let $\Gamma:=\pi_1(M)$. If $M$ is an Anosov bundle then $\Gamma$ fits into a short exact sequence\[1\rightarrow \Z^2\rightarrow \Gamma\rightarrow\Z\rightarrow 1.\] Since $\Z$ is free the sequence splits and we can write $\Gamma=\Z^2\rtimes_A \Z$, where the $\Z$-factor conjugates $\Z^2$ by an Anosov matrix $A\in \text{SL}_2(\Z)$. 

Let $x,y$ be the standard basis elements generating $\Z^2$ and $t$ be the standard generator of $\Z$. Then $x,y$ and $t$ together generate $\Gamma$ and we have $tvt^{-1}=Av$ for all $v\in \Z^2$.  First observe that for any $v, w\in \Z^2$, $v$ is conjugate to $w$ in $\Gamma$ if and only if for some $k\in \Z$ we have $A^k(v)=w$. Indeed, we have that $t^kvt^{-k}=A^k(v)$. Conversely, suppose that $w=gvg^{-1}$ for some $g\in \Gamma$. Since $\Gamma$ is a semi-direct product, we can write $g=at^m$ for some $a\in \Z^2$ and $m\in \Z$. Then 
\[w=gvg^{-1}=a(t^mvt^{-m})a^{-1}=a(A^m(v))a^{-1}=a+A^m(v)-a=A^m(v),\]
where we have used the fact that $a$ and $A^m(v)$ commute since both lie in $\Z^2$.

Let $\phi$ be a class preserving automorphism of $\Gamma$. Post-composing by conjugation by some element of $\Gamma$ we may assume $\phi(t)=t$.  Since $\Z^2$ is normal, we know that $\phi|_{\Z^2}$ can be represented by some matrix $P\in \text{GL}_2(\Z)$. \textit{A priori} $P$ sends $x\mapsto A^kx$ for some $k\in \Z$, and by post-composing $\phi$ with conjugation by $t^{-k}$, we can arrange that $\phi(x)=x$ as well. Note that this does not change the fact that $\phi(t)=t$. Observe that $A$ commutes with $P$ since \[P(Av)=\phi(Av)=\phi(tvt^{-1})=t(Pv)t^{-1}=A(Pv)\] for each $v\in\Z^2$. In particular, we see that $P(Ax)=Ax$. In other words, $P$ fixes both $x$ and $Ax$. Since $A$ is Anosov, it has two distinct and irrational eigenvectors.  As $x$ is integral, $Ax$ cannot be a multiple of $x$.
We now have that $P$ fixes the two linearly independent vectors $x, A(x)\in \Z^2$. This implies that $P=I$ and that $\phi=\Id$, proving the proposition for Anosov bundles.

Suppose now $M$ is an Anosov double with gluing map represented by \[A=\left(\begin{array}{cc}a&b\\c&d \end{array}\right)\in \text{SL}_2(\Z).\]  By van Kampen's theorem, we get a presentation
\[\Gamma=\left\langle t_1,x_1,t_2,x_2\mid t_1x_1t_1^{-1}=x_1^{-1},~t_2x_2t_2^{-1}=x_2^{-1},~t_2^2=(t_1^2)^ax_1^c,~x_2=(t_1^2)^bx_1^d\right\rangle\]
The first two relations correspond to the two twisted $I$-bundles, each of whose fundamental group is  that of the Klein bottle.  The boundary tori are generated by $\langle t_1^2,x_1\rangle$ and $\langle t_2^2,x_2\rangle$, which are then identified by $A$, giving the last two relations. Since $\Z^2=\langle t_1^2,x_1\rangle=\langle t_2^2,x_2\rangle$ is normal, 
$\Gamma$ fits into a short exact sequence:
\[1\rightarrow \Z^2\rightarrow \Gamma\rightarrow\Z/2*\Z/2\rightarrow 1\]
where the $t_i$, $i=1,2$ map to the two generators of the $\Z/2$ factors. As the quotient is a free product, in order to determine the conjugation action on the kernel, it is enough to know how $t_1$ and $t_2$ act. Let $T_i$ be the involution of $\Z^2$ induced by conjugation by $t_i$. By definition, the fact that $M$ is an Anosov double means that  $T_1T_2$ is Anosov. In particular, since $T_1T_2$ has infinite order but $T_1$ and $T_2$ have order two, we see that $T_1$ and $T_2$ do not commute.  If we take $\{t_1^2,x_1\}$ as a basis for $\Z^2$, then  $T_1$ has matrix representation \[B:=\left(\begin{array}{cc}1&0\\0&-1 \end{array}\right)\]
while $T_2$ has matrix representation $A^{-1}BA$, hence $BA^{-1}BA$ is an Anosov matrix. 

Let $\phi$ be a class preserving automorphism of $\Gamma$.  Since $\Z^2$ is normal, $\phi(\Z^2)=\Z^2$. Thus, $\phi$ induces a class preserving automorphism $\overl{\phi}$ of $\Z/2*\Z/2$.  The latter, being a non-trivial free product, has Property A by the result of~\cite{Neshchadim:1996}. After conjugating $\Gamma$ by an alternating word in $\{t_1, t_2\}$, possibly followed by a power of $x_1$, we may assume that $\overline{\phi} = \Id$, $\phi(t_1) = t_1$, and $\phi(t_2) = t_2 w$ for some $w \in \mathbb{Z}^2$. Now, the restriction $\phi|_{\mathbb{Z}^2}$ is an automorphism $S$ of $\Z^2$, which must satisfy
\begin{align*}
S(T_1(v)) &= \phi(t_1 v t_1^{-1}) = t_1 \phi(v) t_1^{-1} = T_1(S(v)), \\
S(T_2(v)) &= \phi(t_2 v t_2^{-1}) = t_2 w \phi(v) w^{-1} t_2^{-1} = T_2(S(v)),
\end{align*}
for all $v \in \mathbb{Z}^2$, since $w \in \mathbb{Z}^2$ commutes with $\phi(v)$.
  In particular, $S$ is in the centralizer of $T_1$ and $T_2$. A straightforward calculation shows that the centralizer $C(T_1)$ of $T_1$ is given by the four diagonal matrices:
\[\left(\begin{array}{cc}\pm1&0\\0&\pm1 \end{array}\right)=\{\pm I,\pm T_1\}.\]
Thus, $C(T_2)=A^{-1}C(T_1)A =\{\pm I, \pm T_2\}$. 
Since $T_1$ and $T_2$ do not commute, the intersection of the centralizers of $T_1$ and $T_2$ is just $\{\pm I\}$.  The fact that $\phi(t_1)=t_1$ implies that $\phi(t_1^2)=t_1^2$.  Hence, $S$ must be the identity so $\phi(x_1)=x_1$, $\phi(x_2)=x_2$, and $\phi(t_2^2)=t_2^2$.

On the other hand, if $t_2$ is conjugate to $\phi(t_2)=t_2w$ then there exists $v\in \Z^2$ such that $t_2w=vt_2v^{-1}=t_2((T_2-I)v)$, where we have used that $T_2=T_2^{-1}$. The image of $(T_2-I)$ is $\langle x_2^2\rangle$ so  $w\in \langle x_2^2\rangle$. Observe that conjugating by $t_1^2$ is the identity on  $t_1$ and $x_1$, and sends  $t_2$ to $t_2((T_2-I)t_1^2)=t_2((T_2-I)A^{-1}t_2^2)$. With respect to the basis $\{t_2^2, x_2\}$, we can represent $(T_2-I)A^{-1}t_2^2$ as the vector \[\left(\begin{array}{cc}0&0\\0&-2 \end{array}\right)\left(\begin{array}{cc}d&-b\\-c&a \end{array}\right)\left(\begin{array}{c}1\\0 \end{array}\right)=\left(\begin{array}{c}0\\2c \end{array}\right),\] which represents $(x_2)^{2c}$.  Thus, to prove the proposition, it suffices to show that $w\in \langle (x_2)^{2c}\rangle$.

For any $v\in \Z^2$, we have $v(t_1t_2)v^{-1}=t_1t_2((T_2T_1-I)v)$. Since $\phi(t_1t_2)=t_1t_2w$ and $\phi$ is conjugacy-preserving, this means that $w\in \im(T_2T_1-I)$. With respect to the basis $\{t_2^2, x_2\}$, $T_2T_1-I$ is represented by the matrix:\[N:=BABA^{-1}-I= \left(\begin{array}{cc}ad+bc-1&-2ab\\-2cd&ad+bc-1 \end{array}\right)=\left(\begin{array}{cc}2bc&-2ab\\-2cd&2bc \end{array}\right), \] where we have used that $ad-bc=1$. Then $\det(N)=4(b^2c^2-abcd)=4bc(bc-ad)=-4bc$ which is nonzero since $T_2T_1=(T_1T_2)^{-1}$ is Anosov. To find the vector $y_2$ whose image is $x_2$ we compute \[y_2=N^{-1}\left(\begin{array}{c}0\\1 \end{array}\right)=\frac{1}{-4bc}\left(\begin{array}{cc}2bc&2ab\\2cd&2bc \end{array}\right)\left(\begin{array}{c}0\\1 \end{array}\right)=\frac{1}{-4bc}\left(\begin{array}{c}2ab\\2bc \end{array}\right)=\frac{1}{-2c}\left(\begin{array}{c}a\\c \end{array}\right).\] Since $ad-bc=1$, $a$ and $c$ are coprime. 
Thus any integral vector that is a multiple of $y_2$ is also a multiple of $2cy_2$.
But this implies $w\in \langle (x_2)^{2c}\rangle$, as desired.      \end{proof}

\section{Proof of Theorem~\ref{thm:irred-rigid}}\label{sec:proof-irred}

Let $M$ be a closed, smooth, orientable manifold. 
We will consider the following topological groups of self-maps of $M$, and briefly review the various relationships between them:
\begin{itemize}
    \item $\Diff(M)$: the group of self-diffeomorphisms of $M$ endowed with the $C^\infty$ topology.
    \item $\Homeo(M)$: the group of self-homeomorphisms of $M$, endowed with the compact open topology.
    \item $\HE(M)$: the group of self-homotopy equivalences of $M$, endowed with the compact open topology.
\end{itemize}
Recall that for any topological group $G$, the path component of the identity, denoted by $G_0$, is a normal subgroup and thus the set of path-components $\pi_0( G)=G/G_0$ inherits a group structure. The natural inclusions \[\Diff(M)\hookrightarrow \Homeo(M)\hookrightarrow \HE(M)\] induce homomorphisms on the groups of path components \[\pi_0(\Diff(M))\to\pi_0(\Homeo(M))\to\pi_0(\HE(M)).\]

Fixing a basepoint $p\in M$, a homotopy equivalence $f\colon M\rightarrow M$ 
induces an isomorphism $f_*\colon \pi_1(M,p)\rightarrow\pi_1(M,f(p))$. 
 Choosing a path $\gamma$ from $p$ to $f(p)$, we obtain an automorphism $\phi_\gamma \circ f_* \in \Aut(\pi_1(M,p))$ where for $[\alpha]\in \pi_1(M,f(p))$, we let $\phi_\gamma([\alpha])=[\bar{\gamma}\cdot\alpha\cdot\gamma]$. Different choices of $\gamma$ lead to automorphisms that differ by an inner automorphism of $\pi_1(M,p)$; hence we obtain a well-defined map $\HE(M)\to \Out(\pi_1(M))$ which does not depend on the basepoint $p$. Furthermore, since homotopic maps induce the same automorphism up to conjugacy, this homomorphism descends to a homomorphism $\pi_0(\HE(M))\to\Out(\pi_1(M))$. Thus we may extend our sequence one step further to obtain:
\[ \pi_0(\Diff(M))\xrightarrow[]{i}\pi_0(\Homeo(M))\xrightarrow[]{j}\pi_0(\HE(M))\xrightarrow[]{k} \Out(\pi_1(M))\]

When $M$ is a 3-manifold, $i$ is an isomorphism. Indeed, Moise \cite{Moise-affineV,MoiseVIII} and Bing \cite{Bing-LocallyTame} showed that every 3-manifold admits a PL triangulation and that every homeomorphism of $M$ is approximable by a PL homeomorphism. Cairns \cite{Cairns-triangulation} showed that any PL 3-manifold is smooth, and that any PL homeomorphism of $M$ can be approximated by a diffeomorphism (see also the work of Munkres \cite{Munkres1959,Munkres1960}). These results imply that  $\Diff(M)$ is dense in $\Homeo(M)$. Building on these results Cerf \cite{Cerf-embeddings} showed that if the Smale conjecture holds, then $\Diff(M)\hookrightarrow \Homeo(M)$ is a weak homotopy equivalence.  The Smale conjecture was later confirmed by Hatcher \cite{Hatcher-Smale}, so in fact $\pi_k(\Diff(M))\to\pi_k(\Homeo(M))$ is an isomorphism for all $k$ and in particular, for $k=0$.

As in the introduction, we denote by $\Mod^+(M)$ the \emph{mapping class group} of $M$, that is, the set of orientation-preserving diffeomorphisms of the smooth, closed, oriented $3$-manifold $M$, modulo isotopy.
We define the \emph{full mapping class group of $M$} to be $\Mod(M):=\pi_0(\Diff(M))$ or equivalently, $ \pi_0(\Homeo(M))$ according to the preceding discussion. Similarly, we define the \emph{homotopy mapping class group of $M$} to be $\HMod(M):=\pi_0(\HE(M))$.

The following is a summary of the work of many authors, referring to the maps in the above diagram.  For more details, see \cite[Section~3]{HM:2013}.

\begin{theorem}\label{thm:mappingclass}Let $M$ be an irreducible, closed, oriented $3$-manifold.  Then
\begin{enumerate}[font=\upshape]
\item\label{item: mapping class main} If $M\neq S^3,\R P^3$, then $\Mod(M)$ injects into  $\Out(\pi_1(M))$.
\item\label{item: mapping class S3 or RP3} If $M=S^3$ or $\R P^3$, then $\Mod(M)\cong \HMod(M)\cong \Z/2$.
\end{enumerate}
In either case, $\Mod^+(M)\hookrightarrow \Out(\pi_1(M))$.
\end{theorem}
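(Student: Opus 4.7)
The plan is to use the factorization $\Mod(M)\to\HMod(M)\to\Out(\pi_1(M))$ exhibited in the diagram preceding the theorem, establishing injectivity of each arrow away from the two exceptional manifolds, and then computing $\Mod$ and $\HMod$ directly for $S^3$ and $\R P^3$.

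For the second arrow $\HMod(M)\to\Out(\pi_1(M))$, I would first observe that an irreducible closed $3$-manifold with infinite fundamental group is aspherical: the sphere theorem together with irreducibility gives $\pi_2(M)=0$, the universal cover is a noncompact simply connected $3$-manifold and so has $H_3=0$, and an iterated Hurewicz argument then forces the universal cover to be contractible. For such a $K(\pi,1)$, standard obstruction theory identifies $\HMod(M)$ with $\Out(\pi_1(M))$. When $\pi_1(M)$ is finite, elliptization (Perelman) forces $M$ to be a spherical space form, and one appeals to the explicit descriptions of $\HMod$ for such manifolds.

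For the first arrow $\Mod(M)\to\HMod(M)$, the content is that homotopic homeomorphisms are isotopic. In the Haken case this is Waldhausen's theorem. For non-Haken aspherical $M$, geometrization leaves the hyperbolic case (handled by Mostow rigidity together with work of Gabai--Meyerhoff--Thurston) and the small Seifert-fibered case with infinite fundamental group (handled by work of Bonahon--Siebenmann, Scott, and Boileau--Otal). For spherical space forms other than $S^3$ and $\R P^3$, the work of McCullough and collaborators---surveyed in the reference pointed to in the excerpt---yields the required injectivity, by combining the computation of $\Mod(M)$ in each case with the fact that $\Out(\pi_1(M))$ is also determined explicitly.

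The exceptional cases would be treated directly. Fisher's theorem gives $\Mod^+(S^3)=1$, and reflection across an equator generates the remaining $\Z/2$; since self-maps of $S^3$ are classified up to homotopy by degree, $\HMod(S^3)\cong\Z/2$ as well. For $\R P^3$ a direct analysis (essentially due to Bonahon) gives $\Mod(\R P^3)\cong\HMod(\R P^3)\cong\Z/2$. In both cases $\Mod^+(M)$ is trivial while $\Out(\pi_1(M))$ is trivial, so the final injection $\Mod^+(M)\hookrightarrow\Out(\pi_1(M))$ holds vacuously. The principal obstacle is that the theorem does not reduce to a single uniform argument: Waldhausen's theorem, Mostow rigidity, the Seifert-fibered analysis, and the spherical space form computations must each be invoked separately, and the exceptional cases $S^3$ and $\R P^3$ demand their own direct treatment.
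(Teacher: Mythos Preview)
Your proposal is correct and follows essentially the same approach as the paper: both factor through $\Mod(M)\to\HMod(M)\to\Out(\pi_1(M))$, invoke asphericity for infinite $\pi_1$, split into Haken (Waldhausen), non-Haken hyperbolic (Gabai--Meyerhoff--Thurston), non-Haken small Seifert-fibered (Scott, Boileau--Otal), and spherical space forms (McCullough), and handle $S^3$ and $\R P^3$ directly via Fisher and Bonahon. Your organization by the two arrows rather than by the finite/infinite $\pi_1$ dichotomy is a cosmetic difference only; the cited inputs and logical structure match the paper's proof.
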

\begin{proof} 
If $\Gamma = \pi_1(M)$ is finite, then by geometrization $M$ is finitely covered by $S^3$ \cite{Perelman:2002-1, Perelman:2003-2, Perelman:2003-1}.  If $M\neq S^3,\R P^3$, then $\Mod(M)$ injects into $\Out(\Gamma)$ \cite{McCullough:2002-1}. When $M = S^3$ or $\R P^3$, the group $\Out(\Gamma)$ is trivial. Theorems of Fisher~\cite{Fisher} and Cerf~\cite{Cerf:1968} in the case of $S^3$, and Bonahon~\cite{Bonahon:1983} in the case of $\R P^3$, imply that $\Mod(M) \cong \HMod(M) \cong \Z/2$, generated by an orientation-reversing diffeomorphism.
  This gives result \pref{item: mapping class S3 or RP3}. Additionally, $\Mod^+(M)$ is trivial and thus injects into $\Out(\Gamma)$. 

If $M$ is irreducible and $\Gamma$ is infinite, then $M$ is aspherical, hence $\HMod(M)\cong \Out(\Gamma)$, by, for example,~\cite[Theorem~1B.9]{Hatcher:2002}.    There are two cases to consider, when $M$ is Haken and when $M$ is non-Haken.  If $M$ is Haken, a well-known theorem of Waldhausen \cite{W3} implies that any homotopy equivalence is homotopic to a homeomorphism, and that any two homotopic homeomorphisms are isotopic.  This implies that $\Mod(M)\cong\HMod(M)$, proving \pref{item: mapping class main} in this case. 

When $M$ is non-Haken, it is either hyperbolic or Seifert-fibered over $S^2$ with 3 exceptional fibers. Suppose that $M$ is hyperbolic and non-Haken. Then Gabai--Meyerhoff--Thurston \cite{GMT2003} showed that $\Mod(M)\cong\HMod(M)$ and by Mostow rigidity~\cite{mostow-rigidity}, we know $\HMod(M)\cong\Out(\pi_1(M))$; hence \pref{item: mapping class main} holds in this case. In the non-Haken Seifert-fibered case, work of Scott \cite{Scott85} and Boileau--Otal \cite{BO1991} proved that $\Mod(M)\cong \HMod(M)$ and that $\Mod(M)$ injects into $\Out(\Gamma)$, proving \pref{item: mapping class main}.

For the final statement when $M\neq S^3, \R P^3$, note that either $\Mod^+(M)$ is an index two subgroup of $\Mod(M)$ if $M$ admits an orientation-reversing self-diffeomorphism, or equal to $\Mod(M)$ otherwise. In either case, we see that $\Mod^+(M)\hookrightarrow \Out(\Gamma)$.  
\end{proof}

\noindent We now use Theorem~\ref{thm:PropertyA} and Theorem \ref{thm:mappingclass} to prove Theorem \ref{thm:freehomotopy}. 

\begin{reptheorem}{thm:irred-rigid}
Let $M$ be an irreducible, smooth, closed, oriented $3$-manifold. If an orientation-preserving diffeomorphism $f\colon M\to M$ preserves  homotopy classes of loops, that is, for any loop $\gamma\subseteq M$ the loop $f(\gamma)$ is  homotopic to $\gamma$, then $f$ is isotopic to the identity. 
\end{reptheorem}

\begin{proof}Let $M$ be an irreducible, smooth, closed, oriented 3-manifold, and let $\Gamma=\pi_1(M)$. Suppose $f$ is  an orientation-preserving diffeomorphism of $M$ which preserves  homotopy classes of loops, and denote by $\overl{f}\in \Mod(M)$ the isotopy class of $f$. Since $f$ preserves  homotopy classes of loops, it induces a class preserving element of $\Out(\pi_1(M))$. By Theorem~\ref{thm:PropertyA} this induced map is an inner automorphism and thus trivial in $\Out(\pi_1(M))$. By Theorem~\ref{thm:mappingclass}, we know that $\Mod^+(M)\hookrightarrow \Out(\pi_1(M))$ and thus $\overline{f}$ is trivial in $\Mod^+(M)$, as required. 
\end{proof}

\section{\texorpdfstring{The Gluck twist on knots in $S^1\times S^2$ with odd winding number}{The Gluck twist on knots in S1xS2 with odd winding number}}\label{sec:glucktwist}\label{sec:odd}
We now turn our attention to the effect of the Gluck twist on knots in $S^1\times S^2$.  Recall that the winding number of a knot $K\subseteq S^1 \times S^2$ is the algebraic intersection number of $K$ and $\{pt\}\times S^2$.  

In order to consistently draw pictures of knots in $S^1\times S^2$ we introduce some notation.  Let $U$ be the standard unknot in $S^3$ and $D$ be a knot in $S^3\smallsetminus U$.  Performing $0$-framed Dehn surgery on $S^3$ along $U$ results in $S^1\times S^2$.  If the image of $D$ is isotopic to $K$ in $S^1\times S^2$ then we will say that $D$ is a \emph{diagram} for $K$.  We will denote by $m_D$ the meridian of $D$ and by $h_D$, the meridian of $U$ (see Figure~\ref{fig:BasisCurves}). Recall from the introduction that adding a full right-handed twist to $D$ gives a diagram of $\G(K)$.

\begin{figure}[htb]
\begin{tikzpicture}
\node at (0,0) {\includegraphics[width=.3\textwidth]{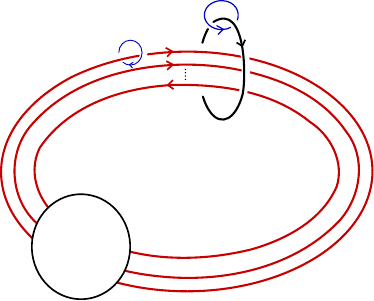}};
\node at (-0.1,1.9) {\textcolor{blue}{$h_{D}$}};
\node at (-1.3,1.4) {\textcolor{blue}{$m_{D}$}};
\node at (.6,0.2) {$0$};
\node at (-1.4,-1.3) {\huge $P$};
\end{tikzpicture}
\caption{A diagram $D$ of a knot $K\subseteq S^1\times S^2$ together with curves $m_{D}$ and $h_{D}$ forming a basis for the first homology of $S^1\times S^2\smallsetminus K$.}\label{fig:BasisCurves}\end{figure}

Note that $D$ and ${D'}$ are diagrams for isotopic knots in $S^1\times S^2$ if and only if $D$ and $D'$ are related in $S^3$ by a sequence of isotopies in the exterior of $U$ and  slides over the $0$-framed knot $U$.  We refer to the latter move as a \emph{handleslide} for convenience, even though, technically speaking, it is an isotopy of the knot $K$ in $S^1\times S^2$. See Figure~\ref{fig:slides} for a depiction of positive and negative handleslides.

\begin{figure}[htb]
\begin{tikzpicture}
\node at (0,0) {\includegraphics[width=.75\textwidth]{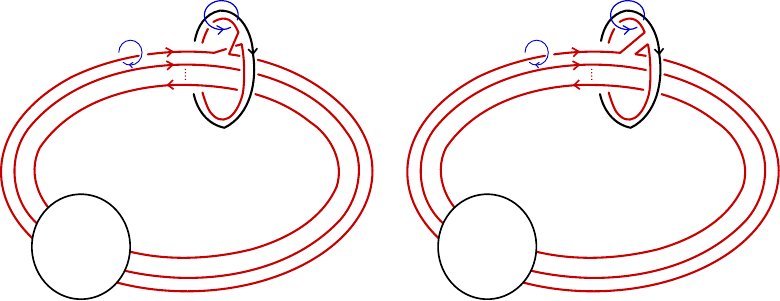}};
\node at (-4.7,1.7) {\textcolor{blue}{$m_{D}$}};
\node at (1.7,1.7) {\textcolor{blue}{$m_{D}$}};
\node at (3.1,2.25) {\textcolor{blue}{$h_{D}$}};
\node at (-3.4,2.25) {\textcolor{blue}{$h_{D}$}};
\node at (4.2,.2) {$0$};
\node at (-2.2,.2) {$0$};
\node at (-4.9,-1.5) {\huge $P$};
\node at (1.55,-1.5) {\huge $P$};
\end{tikzpicture}
\caption{Positive (left) and negative (right) handleslides performed from $D$ together with the image of $m_{D}$ and $h_{D}$ after the handleslides. To make sense of positive versus negative, observe that after a positive (respectively, negative) handleslide, the new knot $D'$ has linking number $+1$ (respectively, $-1$) with $h_D$. Moreover, we remark that a handleslide is not a well-defined move on a diagram, even after fixing the sign, since there are various choices involved in performing band surgery on $D$ with a $0$-framed push-off of $U$.}\label{fig:slides}
\end{figure}


A framing of a knot $K$ is an identification of a regular neighbourhood $\nu(K)$ with $S^1\times D^2$. Equivalently, a framing is simply a choice of push-off $K^+$ of $K$.  Given a choice of diagram $D$ for $K$ and an integer $a$, the \emph{$a$-framing of $K$ with respect to $D$} is specified by taking the $a$-framing on $D$.  In other words the $a$-framing is given by the push-off $D^+$ so that the linking number $\lk(D,D^+)=a$, where the latter is computed in $S^3$. 
Note that this really does depend on the choice of $D$, as a handleslide changes $a$
(see Figure~\ref{fig:framing}), but we avoid this by fixing a diagram of a knot. Let $M(D, a)$ denote the $3$-manifold obtained by modifying $S^1\times S^2$ by $a$-framed surgery along $D$ where $D$ is a diagram for $K$. Explicitly, this is the $3$-manifold obtained by performing $(0,a)$ framed surgery on $S^3$ along the $2$-component link $(U,D)$. For simplicity, we still use the letters $m_D$ and $h_D$ to denote the images of these curves in the surgered manifold.

\begin{figure}[htb]
\begin{tikzpicture}
\node at (0,0) {\includegraphics[width=.75\textwidth]{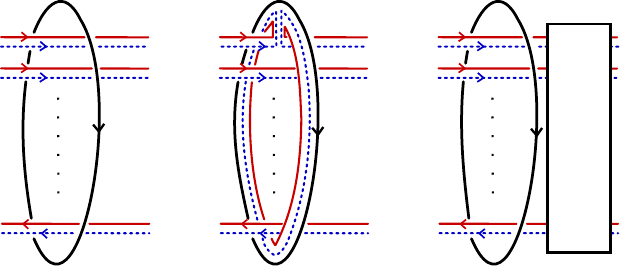}};
\node at (-4.3,2.3) {$0$};
\node at (0.1,2.3) {$0$};
\node at (4.5,2.3) {$0$};
\node at (5.4,0) {\Large{+1}};
\end{tikzpicture}
\caption{Left: The push-off (blue, dashed) corresponding to the $a$-framing on a diagram $D$ (red, solid) for some knot $K$ with winding number $w$. Center: The $a$-framed push-off is sent to to the $(a-2w)$-framed push-off of a diagram $D'$ for $K$ by a negative handleslide. 
Right: The $a$-framed push-off is sent to the $(a+w^2)$-framed push-off of a diagram $\widetilde{D}$ for $\G(K)$ by the Gluck twist.}\label{fig:framing}
\end{figure}

\begin{proposition}\label{prop:prelim}
Let $K$ be a knot in $S^1\times S^2$ with winding number $w$ and $D$ be a diagram of $K$. Suppose $\widetilde{D}$ is the diagram of $\G(K)$ obtained from $D$ by adding a full right-handed twist. Suppose that $D$ can be obtained from $\widetilde{D}$ by a sequence of isotopies in the exterior of $U$, $k_+$ positive handleslides, and $k_-$ negative handleslides. Let $k=k_+-k_-$.


Then there is a self-diffeomorphism of $S^{1}\times S^{2}$ that fixes $D$ and, for every integer $a$, sends the $a$-framing of $D$ to its $(a+w^{2}+2kw)$-framing.

Consequently, for any integer $a$, there is a diffeomorphism 
\[
\phi\colon M(D,a)\to M\left(D,{a+w^2+2kw}\right).
\]
Moreover, we have that $$\phi_*:H_1(M(D,a);\Z)\to H_1\left(M\left(D,{a+w^2+2kw}\right);\Z\right)$$ is given by 
\[
\phi_*([m_{D}]) = [m_{D}] \text{ and }\phi_*([h_{D}]) = [h_{D}]+(w+k)\cdot [m_{D}].
\]
\end{proposition}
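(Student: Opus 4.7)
The plan is to set $\phi = \iota \circ \G$, where $\G$ is the Gluck twist and $\iota\colon S^1\times S^2 \to S^1\times S^2$ is an ambient isotopy realizing the prescribed sequence of exterior isotopies and handleslides that carries $\widetilde{D}$ back to $D$. Both pieces are self-homeomorphisms of $S^1\times S^2$, and by construction $\phi(K)=K$ setwise. Once the framing statement is verified, $\phi$ extends across the surgery solid torus to the desired homeomorphism $\widehat{\phi}\colon M(D,f)\to M(D,f+w^2+2kw)$, and the induced map on $H_1$ can then be computed by following $[m_D]$ and $[h_D]$ through the two pieces.

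For the framing claim, I would analyze $\G$ and $\iota$ separately. By definition the Gluck twist inserts a full positive twist in the $w$ algebraic strands of $D$ crossing $\{pt\}\times S^2$; an elementary linking computation (pairwise linkings between the $w$ strands of $D$ and the $w$ strands of a pushoff each gain $+1$ from the twist) shows that the self-linking of the pushoff gains exactly $w^2$, so $\G$ sends the $f$-framing of $D$ to the $(f+w^2)$-framing of $\widetilde{D}$. A standard band-sum linking-number computation using $\lk(D,U)=w$ and the $0$-framing on $U$ shows that a positive (resp.\ negative) handleslide of $D$ over $U$ changes the framing integer by $+2w$ (resp.\ $-2w$); this is recorded in Figure~\ref{fig:framing}. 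Adding $k_+$ positive and $k_-$ negative contributions gives a total framing shift of $w^2+2kw$.

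For the homology computation, $\phi_*([m_D])=[m_D]$ is immediate, since $\phi$ preserves $K$ and any two meridians of $K$ are isotopic in its complement. For $\phi_*([h_D])$, I would track contributions from $\G$ and from each handleslide. The curve $h_D$, being (isotopic to) the core of the glued-in surgery solid torus, meets the twisting sphere $\{pt\}\times S^2$ exactly once; when $\G$'s full positive twist is applied to all $w+1$ strands meeting the sphere (the $w$ from $D$ together with the one from $h_D$), the one strand of $h_D$ acquires $w$ additional signed crossings with the strands of $\widetilde{D}$, giving $\G_*([h_D]) = [h_D] + w[m_{\widetilde{D}}]$. Each handleslide of $D$ over $U$ is realized by an ambient isotopy supported near a band meeting the meridian disk of the surgery solid torus, and since that meridian disk is transverse to $h_D$ in a single point, tracking signs (as in Figure~\ref{fig:slides}) yields a contribution of $+[m_D]$ (resp.\ $-[m_D]$) per positive (resp.\ negative) handleslide. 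Using $\iota_*([m_{\widetilde{D}}])=[m_D]$ and $\iota_*([h_D]) = [h_D]+k[m_D]$, we assemble
\[
\phi_*([h_D]) = \iota_*\bigl(\G_*([h_D])\bigr) = \iota_*\bigl([h_D]+w[m_{\widetilde{D}}]\bigr) = [h_D]+(w+k)[m_D].
\]

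The main technical difficulty is justifying the two local contributions: that $\G$ contributes $w[m_{\widetilde{D}}]$ and that each handleslide contributes $\pm[m_D]$. Both reduce to signed intersection counts, but neither $\G$ nor an arbitrary handleslide isotopy can be chosen to literally fix $h_D$, so some care is required to place $\G$ in a normal form supported in a small neighborhood of $\{pt\}\times S^2$ and to realize each handleslide by an isotopy supported in a thin band, so that the effect on $h_D$ can be read off directly from the supports intersecting $h_D$.
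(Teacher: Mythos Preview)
Your proposal is correct and follows essentially the same approach as the paper's proof: decompose the self-homeomorphism as the Gluck twist followed by the ambient isotopy realizing the handleslides, then compute the effect of each piece on the framing integer (yielding $+w^2$ from $\G$ and $\pm 2w$ per handleslide) and on the homology classes $[m_D]$, $[h_D]$ (yielding $+w[m]$ from $\G$ and $\pm[m]$ per handleslide). The paper reads these contributions off from Figures~\ref{fig:framing}, \ref{fig:slides}, and \ref{fig:Twist-With-Basis}, whereas you phrase them as linking-number and signed-crossing counts, but the content is identical.
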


\begin{proof} As in the hypothesis, let $D$ and $\widetilde{D}$ be diagrams of $K$ and $\G(K)$, respectively, where $\widetilde{D}$ is obtained from $D$ by adding a full right-handed twist. The far right panel of Figure~\ref{fig:framing} reveals that the Gluck twist sends the $a$-framing of $K$ with respect to $D$ to the $(a+w^2)$-framing of $\G(K)$ with respect to $\widetilde{D}$. Moreover, Figure~\ref{fig:Twist-With-Basis} shows that $\G$ sends $m_{D}$ to $m_{\widetilde{D}}$ and sends the homology class $[h_{D}]$ to $[h_{\widetilde{D}}]+w\cdot [m_{\widetilde{D}}]$.

\begin{figure}[htb]
\begin{tikzpicture}
\node at (0,0) {\includegraphics[width=.75\textwidth]{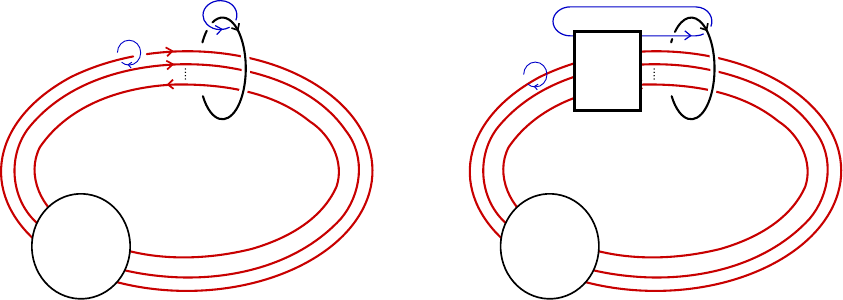}};
\node at (-4.9,1.6) {\textcolor{blue}{$m_{D}$}};
\node at (0.8,1.4) {\textcolor{blue}{$\G(m_{D})$}};
\node at (-3.5,2.1) {\textcolor{blue}{$h_{D}$}};
\node at (1.2,2.1) {\textcolor{blue}{$\G(h_{D})$}};
\node at (3.6,.2) {$0$};
\node at (-2.3,.2) {$0$};
\node at (-5,-1.4) {\huge $P$};
\node at (1.9,-1.4) {\huge $P$};
\node at (2.75,1.2) {\Large $+1$};
\end{tikzpicture}
\caption{The Gluck twist together with the image of basis curves $m_{D}$ and $h_{D}$.}\label{fig:Twist-With-Basis}
\end{figure}

Let $D$ be a diagram for a knot $K$ in $S^1\times S^2$.  Let $D'$ be the result of modifying $D$ by a negative
 handleslide.  Figures~\ref{fig:slides} and \ref{fig:framing} show that this handleslide sends the $a$-framing of $K$ with respect to $D$ to the 
 $(a-2w)$-framing
 of $K$ with respect to the new diagram $D'$, sends the meridian $m_D$ to the meridian $m_{D'}$, and sends the homology class $[h_D]$ to 
 $[h_{D'}]-[m_{D'}]$.
If instead $D'$ is the result of a positive handleslide, then the $a$-framing is sent to the $(a+2w)$-framing, $[m_D]$ is sent to $[m_{D'}]$, and $[h_D]$ is sent to $[h_{D'}]+[m_{D'}]$.
 Naturally, isotopies of a diagram in the exterior of $U$ do not change the framing.

By assumption, the diagram $\widetilde{D}$ can be changed to $D$ by a sequence of isotopies in the exterior of $U$, $k_+$ positive handleslides, and $k_-$ negative handleslides. Recall that $k=k_+-k_-$. Composing $\G$ with the diffeomorphism of $S^1\times S^2$ induced by the claimed isotopy, we get the desired self-diffeomorphism of $S^1\times S^2$ sending $D$ to itself, and specifically the $a$-framing of $D$ to the $(a+w^2+2kw)$-framing of $D$. By construction this map sends $[m_D]$ to $[m_D]$, and sends $[h_D]$ to $[h_D]+(w+k)\cdot[m_D]$. The desired map $\phi$ is obtained by performing surgeries dictated by the framings.\end{proof}

We now begin the proof of Theorem~\ref{thm:glucktwist}. In this section we address the case of odd winding numbers. The following is the goal of the remainder of this section. 


\begin{proposition}\label{prop:w=-2k}
Let $K$ be a knot in $S^1\times S^2$ with winding number $w$ and $D$ be a diagram of $K$. Suppose $\widetilde{D}$ is the diagram of $\G(K)$ obtained from $D$ by adding a full right-handed twist. Suppose that $D$ can be obtained from $\widetilde{D}$ by a sequence of isotopies in the exterior of $U$, $k_+$ positive handleslides, and $k_-$ negative handleslides. Let $k=k_+-k_-$.

Then either $w^2+2kw=0$ or $K$ is the Hopf knot or its reverse.
\end{proposition}

The above will settle Theorem~\ref{thm:glucktwist} for odd winding numbers since $w^2+2kw=0$ implies that $w$ is even. 

We will need the following definitions. A diffeomorphism $f$ of a compact $3$-manifold $M$ is said to be \emph{Dehn twists on the boundary} if its restriction to the boundary $\partial M$ is isotopic to the identity on the complement of a collection of disjoint simple closed curves in $\partial M$. If this collection is nonempty, and
the restricted diffeomorphism is not isotopic to the identity on the complement of any
proper subset of the collection, then we say that $f$ is Dehn twists \emph{about} the collection. The restriction of $f$ to $\partial M$ is then isotopic to a composition of nontrivial (powers of) Dehn twists about the curves. We will use the following result of McCullough.

\begin{theorem}[{\cite[Theorem~1]{McCullough06}}]\label{thm:mccullough}
Let $M$ be a smooth, compact, orientable $3$–manifold which admits a diffeomorphism which is Dehn twists on the boundary about the collection $C_1,\dots, C_n$ of simple closed curves in $\partial M$. Then for each $i$, either $C_i$ bounds a disk in $M$, or for some $j\neq i$, $C_i$ and $C_j$ cobound an incompressible annulus in M.
\end{theorem}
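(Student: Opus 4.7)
My plan is to argue by contradiction. Fix an index $i$ and suppose that $C_i$ does not bound a disk in $M$ and does not cobound an incompressible annulus with any $C_j$, $j \neq i$. I aim to show that under these assumptions the Dehn twist contribution of $f$ along $C_i$ can be isotoped away, contradicting the minimality of the collection.

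The principal tool I would invoke is the Jaco--Shalen--Johannson characteristic submanifold $V \subset M$: a canonical submanifold, unique up to ambient isotopy, whose components are Seifert-fibered spaces and $I$-bundles and that carries every essential annulus and torus of $M$ up to isotopy. Using uniqueness of $V$ together with isotopy extension, I would first replace $f$ by an isotopic homeomorphism with $f(V) = V$. Since $f|_{\partial M}$ is supported, up to isotopy, in an annular neighborhood of $C_1 \cup \cdots \cup C_n$, a further isotopy arranges that each $C_i$ lies on $\partial V \cap \partial M$ parallel to a regular fiber of the adjacent Seifert piece, or to the endpoint curve of an $I$-bundle fiber. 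This step is where the equivariant form of the JSJ theorem is essential.

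The key geometric step is then: because $f|_{\partial M}$ restricts to a nontrivial power of $\tau_{C_i}$ near $C_i$, and because such a twist is generated by a full turn along a regular fiber of the adjacent piece of $V$, one can push $C_i$ along fibers to produce a properly embedded annulus or disk $A \subset V \subset M$ with $C_i \subset \partial A$. If $A$ is a disk, then $C_i$ bounds a disk, contradicting our standing assumption. If $A$ is a compressible annulus, a standard compression disk argument yields a disk bounded by $C_i$ or by its other boundary $C' \subset \partial M$; the first again contradicts the assumption, and the second forces $C'$ to be one of the $C_j$ bounding a disk (by the analysis below), so that $C_j$ could be removed from the collection, contradicting minimality.

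Thus $A$ is an incompressible annulus, and it remains to identify $C' := \partial A \setminus C_i$ with some $C_j$. The mechanism is that the twist $\tau_{C_i}^{n_i}$ on one side of $A$ propagates through the annulus to a compensating twist $\tau_{C'}^{\pm n_i}$ on the other side, since any homeomorphism of an annulus that acts as a nontrivial Dehn twist on one boundary and isotopically trivially on the other cannot be realized fiber-wise. Because $f|_{\partial M}$ is isotopic to the identity off a neighborhood of $\bigcup C_j$, the curve $C'$ must be isotopic to some $C_j$, producing an incompressible annulus cobounding $C_i$ and $C_j$ and contradicting our standing assumption. The main technical obstacle will be justifying that $f$ can be isotoped to respect $V$ strongly enough that the twist curves $C_i$ become fiber-parallel on $\partial V \cap \partial M$; once that is in place, the argument reduces to careful bookkeeping of how Dehn twists interact with saturated annuli and $I$-fibers of the characteristic submanifold.
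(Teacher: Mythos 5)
The paper does not prove this statement; it is cited directly from McCullough~\cite{McCullough06}, so there is no internal argument to compare yours against, and I must assess your proposal on its own merits. The most serious gap is the step where you claim that, after equivariantly isotoping the characteristic submanifold $V$, each $C_i$ can be further isotoped to lie on $\partial V\cap\partial M$ parallel to a regular fiber or an $I$-fiber endpoint. This is not a routine preliminary normalization: the assertion that a twist curve of a boundary Dehn twist must be compatible with the fibered structure of $V$ is essentially the content of the theorem, and your argument assumes it rather than proving it. A second, equally basic issue is that the Jaco--Shalen--Johannson characteristic submanifold exists for irreducible, boundary-irreducible manifolds, while the theorem is stated for arbitrary compact orientable $M$. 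For $M$ with compressible boundary --- a handlebody, say --- there is no characteristic submanifold in the sense you invoke, yet this is precisely the regime in which twist curves bounding compressing disks occur; McCullough's proof handles this first, via the characteristic compression body, before reducing to the boundary-irreducible case.

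Even setting those aside, the construction of the annulus $A$ by ``pushing $C_i$ along fibers'' is not justified: on a boundary torus of a Seifert piece the regular fiber is simply a circle, and following the fibration does not produce a properly embedded annulus with both boundaries on $\partial M$ unless the base orbifold happens to contain a suitable arc; in an $I$-bundle component the vertical annulus through $C_i$ has its other boundary on the frontier of $V$, which in general is not in $\partial M$. Finally, in the compressible-annulus subcase your argument that the other boundary curve bounds a disk and ``so $C_j$ could be removed from the collection, contradicting minimality'' does not hold: a curve $C_j$ bounding a disk in $M$ does not make $\tau_{C_j}$ isotopically trivial on $\partial M$, so removing $C_j$ does not leave a restriction isotopic to the identity, and minimality (in the sense used in the paper's definition of ``Dehn twists about'') is not violated.
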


%

\begin{proof}[Proof of Proposition~\ref{prop:w=-2k}]
Proposition~\ref{prop:prelim} implies that there is a self-diffeomorphism of $S^{1}\times S^{2}$ that fixes $D$ and, for every integer $a$, sends the $a$-framing of $D$ to its $(a+w^{2}+2kw)$-framing. By
construction, this diffeomorphism preserves a regular neighbourhood of $D$. On the boundary of this solid torus neighbourhood, the induced map is the identity if $w^2+2kw=0$ and the composition of $w^2+2kw$ Dehn twists along $m_D$ otherwise.  

By Theorem~\ref{thm:mccullough}, if $w^2+2kw\neq 0$, then $m_D$ bounds an embedded disk $\Delta$ in $S^1\times S^2\smallsetminus K$. The union of $\Delta$ with a meridional disk for $\nu(K)$ is a nonseparating sphere in $S^1\times S^2$ which intersects $K$ precisely once. Since the sphere intersects $K$ precisely once, it represents a generator of $H_2(S^1 \times S^2;\Z)\cong \Z \cong \pi_2(S^1 \times S^2)$. We choose its orientation so that it agrees with the class represented by the inclusion $\{\mathrm{pt}\}\times S^2 \hookrightarrow S^1 \times S^2$. It follows that the sphere is homotopic to $\{\mathrm{pt}\}\times S^2$, and hence isotopic to it as well~\cite{laudenbach:annals-1973}. By the isotopy extension theorem and the lightbulb trick, $K$ is isotopic to the Hopf knot or its reverse.
\end{proof}


The above arguments can also be used to prove the analogue of Theorem~\ref{thm:main} for connected sums of $S^1\times S^2$, as follows. 

\begin{corollary}\label{cor:nS1xS2}
    Let $M=\#^n (S^1\times S^2)$ for some $n\geq 1$ and suppose that $f\colon M\to M$ is an orientation-preserving diffeomorphism which is not isotopic to the identity. Then there is a knot $K\subseteq M$ such that $K$ and $f(K)$ are not isotopic. 
\end{corollary}

\begin{proof}
If $f$ does not preserve homotopy classes of loops, then we may choose $K$ to be any knot whose homotopy class is not preserved by $f$. Here we used the fact that since the ambient manifold is 3-dimensional, every element of the fundamental group can be represented by an embedded loop. For the rest of the proof, assume that $f$ preserves homotopy classes of loops, i.e.~the induced map on $\pi_1(M)$ is class preserving. By Theorem~\ref{thm:PropertyA}, the fundamental group $\pi_1(M)$ has Property~A, so $f$ lies in the kernel of $\Phi\colon\Mod^+(M)\to \Out(\pi_1(M))$. Laudenbach showed in~\cite{laudenbach:annals-1973,laudenbach:homotopie-et-isotopie} that the kernel of $\Phi$ is isomorphic to $(\Z/2)^n$, generated by Gluck twists along the nonseparating $2$-spheres $*\times S^2$ in each summand. So, up to isotopy, $f$ is the composition of the Gluck twists along some nontrivial subset of the standard nonseparating $2$-spheres $*\times S^2$ in each summand. Up to reordering, we assume that $S$ lies in this subset, where $S$ denotes the nonseparating sphere in the first $S^1\times S^2$ summand in $M$.


Choose a knot $K\subseteq M$ in the homology class $(w_1, 0, \dots, 0)\in \Z^n\cong H_1(M;\Z)$ with $w_1\geq 3$ and odd. Then arguing as in Propositions~\ref{prop:prelim} and \ref{prop:w=-2k}, we will see that $K$ is not isotopic to $f(K)$. Consider $M$ as obtained from $0$-framed surgery on the $n$-component unlink. Then $K$ is represented by a diagram $D$ with linking number $w_1$ with the first component, and zero with every other component. Fixing a framing $a$ on $D$, by the proof of Proposition~\ref{prop:prelim}, we see that if $K$ were isotopic to $f(K)$, then there would be a self-diffeomorphism of $M$ taking the $a$-framing of $D$ to the $(a+w_1^2+2kw_1)$-framing, for some integer $k$. Note that $f$ may involve Gluck twists along other nonseparating spheres in $M$ and the putative isotopy might involve handleslides interacting with other parts of the surgery diagram. But since $K$ has trivial winding number with all but the first component of the surgery diagram, there is no contribution of such moves on the framing. Returning to the proof, we see, as in Proposition~\ref{prop:w=-2k}, using Theorem~\ref{thm:mccullough}, that either $w_1^2+2kw_1=0$ or $K$ intersects $S$ precisely once, both of which conditions contradict our hypotheses on $K$. Therefore $K$ is not isotopic to $f(K)$.
\end{proof}

Corollary~\ref{cor:nS1xS2} for $n=1$ and Theorem~\ref{thm:freehomotopy} together imply Theorem~\ref{thm:main}. At present Corollary~\ref{cor:nS1xS2} is our only application to reducible $3$-manifolds. The following remark explains the difficulty in extending our proof techniques to general reducible $3$-manifolds.

\begin{example}
While our paper is focused primarily on knots, our techniques may be applied to the study of links. We illustrate this principle now. Consider the diagram $D$ of the $n$-component link $L$ in $S^1\times S^2$, consisting of the closure of the trivial $n$-strand braid (see Figure~\ref{fig:trivial-braid-link}).
\begin{figure}[htb]
\begin{tikzpicture}
\node at (0,0) {\includegraphics[width=.3\textwidth]{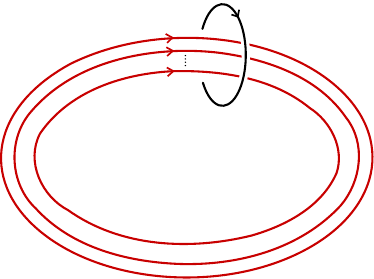}};
\node at (1.1,1.6) {$0$};
\end{tikzpicture}
\caption{A diagram $D$ of a link $L$ in $S^1\times S^2$ obtained as the closure of the trivial $n$-strand braid.}\label{fig:trivial-braid-link}
\end{figure}
Let $M(D;f_1,f_2,\dots, f_n)$ denote the manifold obtained by performing $(f_1,f_2,\dots, f_n)$-framed surgery on $S^1\times S^2$ along $D$, or more explicitly, by performing $(0;f_1,f_2,\dots, f_n)$-framed surgery on $S^3$ along the link $(U;L)$. By the proof of Proposition~\ref{prop:prelim} if the isotopy class of $L$ is preserved under the Gluck twist, there is a diffeomorphism $M(D; 0,0,\dots,0)\cong M(D;1+2x_1,1+2x_2,\dots,1+2x_n)$ for integers $x_1,x_2,\dots,x_n\in \Z$. By a sequence of handleslides and removing a 0-framed Hopf link, it is clear that $M(D; 0,0,\dots,0)\cong \#^{n-1} S^1\times S^2$. On the other hand, the manifold $M(D;1+2x_1,1+2x_2,\dots,1+2x_n)$ is a Seifert-fibered space (see e.g.\ \cite{Neumann:1978-1}). The only non-prime orientable Seifert-fibered space is $\mathbb{R}P^3\# \mathbb{R}P^3$~\cite[p.\ 9]{Friedlbook}. Thus for $n\geq 3$, the isotopy class of $L$ is nontrivially altered by the Gluck twist. This completes the isotopy classification of closed surface braids by Grant-Sienicka~\cite{Grant_2020} (see~\cite[Remark~1.4]{Grant_2020}).
\end{example}
\section{Non-zero even winding numbers}\label{sec:even}

In this section, we show that for each positive even integer $w$, there exists a knot $K$ with winding number $w$ whose isotopy class is not preserved under the Gluck twist. This will imply Theorem~\ref{thm:glucktwist} for all $w\neq 0$ since the result of changing the orientation of any knot with winding number $w$ has winding number $-w$.

First, we consider the case when $w$ is greater than $2$ and at the end of the section we deal with the winding number $2$ case. For relatively prime integers $p,q$, we denote the $(p,q)$ torus knot by $T_{p,q}$. As in Section~\ref{sec:glucktwist}, given a knot $K$ in $S^1\times S^2$ and a diagram $D$ for $K$, the meridian of $K$ is denoted by $m_{D}$ and the meridian of the unknotted surgery curve $U$ is denoted by $h_{D}$. 

\begin{figure}[htb]
\includegraphics[width=.4\textwidth]{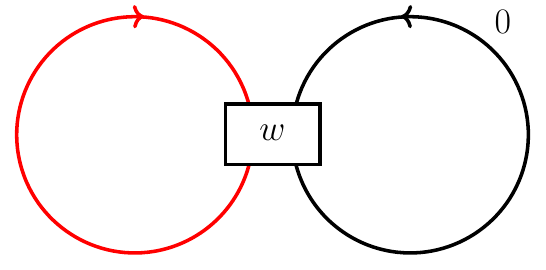}
\caption{A diagram $D_w$ for the knot $K_w$.  The box containing $w$ indicates $w$ full right-handed twists. 
}\label{fig:evenwinding}
\end{figure}

\begin{lemma}\label{lem:surgtorus}Let $K_w\subseteq S^1\times S^2$ be the knot with diagram $D_w$ and winding number $w$ shown in Figure~\ref{fig:evenwinding}. If $w$ is an even integer and $K_w$ is isotopic to $\G(K_w)$, then there exists a diffeomorphism
$$\psi \colon S^3_{w^2}(T_{w,w+1}) \to S^3_{w^2}(T_{w,w+1}),$$
such that, if $\mu$ is the meridian of $T_{w,w+1}$, then $$\psi_*\colon H_1\left(S^3_{w^2}(T_{w,w+1});\Z\right)\to H_1\left(S^3_{w^2}(T_{w,w+1});\Z\right)$$ is given by $$\psi_*([\mu]) = \left(\frac{w^2}{2}+1\right)\cdot[\mu].$$
\end{lemma}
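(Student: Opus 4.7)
The plan is to apply Proposition~\ref{prop:prelim} to $K_w$ and then convert the resulting homeomorphism of a Dehn surgery of $S^1\times S^2$ into a self-homeomorphism of $S^3_{w^2}(T_{w,w+1})$ via an explicit surgery identification.

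First, I would show the surgery identification $M(D_w,1)\cong S^3_{w^2}(T_{w,w+1})$. Since $K_w$ is the $(w,1)$-cable of the Hopf knot together with the full twists indicated by the box in Figure~\ref{fig:evenwinding}, in the surgery presentation for $S^1\times S^2$ the curve $D_w$ sits on the boundary of a tubular neighborhood of the $0$-framed unknot $U$. A suitable sequence of handleslides of $D_w$ over $U$, together with either blowing down $U$ or using a Rolfsen twist, would convert $(0,1)$-framed surgery on the link $(U,D_w)$ into $w^2$-framed surgery on the torus knot $T_{w,w+1}$ in $S^3$. A quick check on linking matrices confirms that $H_1(M(D_w,1))\cong \Z/w^2$, matching $H_1(S^3_{w^2}(T_{w,w+1}))$. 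In the course of the handleslide calculation I would express the meridian $\mu$ of $T_{w,w+1}$ as a specific $\Z$-linear combination $[\mu]=a[m_{D_w}]+b[h_{D_w}]$ in $H_1(M(D_w,1))$.

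Second, under the hypothesis $K_w\simeq\G(K_w)$, Proposition~\ref{prop:prelim} furnishes an integer $k$ and, for every integer $f$, a homeomorphism $\phi\colon M(D_w,f)\to M(D_w,f+w^2+2kw)$. Applied with $f=1$, this gives a homeomorphism whose source is $S^3_{w^2}(T_{w,w+1})$ by the first step, and whose target must consequently also be homeomorphic to $S^3_{w^2}(T_{w,w+1})$. A comparison of the Kirby identifications of source and target, using the classification of surgeries on the torus knot $T_{w,w+1}$ (namely that $S^3_{p}(T_{w,w+1})\cong S^3_{p'}(T_{w,w+1})$ only when $p=p'$ or $p+p'=2w(w+1)$), forces $k$ to equal $-w/2$, an integer because $w$ is even, so that $w^2+2kw=0$ and the framings on the two ends of $\phi$ agree. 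Then $\phi$ descends to a self-homeomorphism $\psi$ of $M(D_w,1)\cong S^3_{w^2}(T_{w,w+1})$.

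Third, the induced map on $H_1\cong \Z/w^2$ is read off from the second part of Proposition~\ref{prop:prelim}. With $k=-w/2$, one has $\phi_*([m_{D_w}])=[m_{D_w}]$ and $\phi_*([h_{D_w}])=[h_{D_w}]+\tfrac{w}{2}[m_{D_w}]$. Substituting the expression $[\mu]=a[m_{D_w}]+b[h_{D_w}]$ determined in the first step and reducing modulo $w^2$ should yield $\psi_*([\mu])=\left(\tfrac{w^2}{2}+1\right)[\mu]$.

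The principal obstacle is the handleslide computation in the first step, specifically performing the Kirby moves that pass from the link $(U,D_w)$ with framings $(0,1)$ to $T_{w,w+1}$ with framing $w^2$, and carefully tracking how the meridian $\mu$ sits in the resulting surgery manifold. Once that surgery identification is in hand, the remainder is a direct application of Proposition~\ref{prop:prelim} together with elementary modular arithmetic in $\Z/w^2$.
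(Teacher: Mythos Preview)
Your overall architecture is right, but there are two concrete problems.

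First, the surgery identification. You propose to work with $M(D_w,1)$ and to reach $S^3_{w^2}(T_{w,w+1})$ by ``handleslides of $D_w$ over $U$, together with either blowing down $U$ or using a Rolfsen twist''. This is the wrong end of the link: $U$ carries framing $0$ and cannot be blown down. The point of the diagram in Figure~\ref{fig:evenwinding} is that $D_w$ itself is an unknot in $S^3$. The paper therefore takes $f=-1$, so that $D_w$ is a $(-1)$-framed unknot, and blows \emph{that} down. The blowdown adds one full positive twist to the $w$ strands of $U$ passing through the spanning disk of $D_w$, converting $U$ into $T_{w,w+1}$ with framing $0+w^2=w^2$. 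Thus $M(D_w,-1)\cong S^3_{w^2}(T_{w,w+1})$, not $M(D_w,1)$. Under this blowdown $h_{D_w}$ is carried to the meridian $\mu$ of $T_{w,w+1}$, so there is no need to solve for unknown coefficients $a,b$: one has $[\mu]=[h_{D_w}]$ together with the surgery relation $[m_{D_w}]=w[h_{D_w}]$.

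Second, your plan to pin down $k$ by recognising both $M(D_w,f)$ and $M(D_w,f+w^2+2kw)$ as integral surgeries on $T_{w,w+1}$ does not go through as stated. Once the framing on $D_w$ is not $\pm1$ you lose the blowdown, and there is no evident identification of $M(D_w,f+w^2+2kw)$ with any $S^3_{p'}(T_{w,w+1})$ to which your cosmetic-surgery criterion could be applied. Handleslides of $D_w$ over $U$ do shift the framing by multiples of $2w$, but they also change the diagram, and the new diagram need not be unknotted in $S^3$. The paper sidesteps this entirely by invoking Proposition~\ref{prop:w=-2k}: since $K_w$ has even winding number it is not the Hopf knot or its reverse, so the hypothesis $K_w\simeq\G(K_w)$ already forces $w^2+2kw=0$, hence $k=-w/2$. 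With $k$ in hand, Proposition~\ref{prop:prelim} yields a self-homeomorphism of $M(D_w,-1)$ directly, and the homology computation is immediate:
\[
\psi_*[\mu]=\phi_*[h_{D_w}]=[h_{D_w}]+\tfrac{w}{2}[m_{D_w}]=[h_{D_w}]+\tfrac{w^2}{2}[h_{D_w}]=\Bigl(\tfrac{w^2}{2}+1\Bigr)[\mu].
\]
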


\begin{proof} Suppose $K_w$ is isotopic to $\G(K_w)$ and let $D_w$ be the diagram of $K_w$ described in Figure~\ref{fig:evenwinding}. Then by Propositions~\ref{prop:prelim} and~\ref{prop:w=-2k}, there is a diffeomorphism 
\[
\phi\colon M(D_w,a)\to M\left(D_w,a\right)
\]
for each $a\in \Z$, where $$\phi_*:H_1(M(D_w,a);\Z)\to H_1\left(M\left(D_w,{a}\right);\Z\right)$$ satisfies $$\phi_*([m_{D_w}]) = [m_{D_w}] \qquad \text{ and } \qquad \phi_*([h_{D_w}]) = [h_{D_w}]+\frac{w}{2}\cdot [m_{D_w}].$$
Set $a=-1$. Now observe that we have a surgery description of $M(D_w,-1)$ as the $(-1,0)$ framed surgey on $S^3$ along the link $(D_w,U)$ (see Figure~\ref{fig:evenwinding}). In particular, we may blow down $D_w$ since it is an unknotted circle with framing $-1$. The second component is transformed into $T_{w,w+1}\subseteq S^3$ with framing $w^2$ (see, for example,~\cite[p.\ 150-151]{gompf-stipsicz}). 

To see the second statement, it suffices to note that we have the relation $[m_{D_w}]=w[h_{D_w}]$ in $H_1(M(D_w,-1);\Z)$ and the curve $h_{D_w}$ is mapped to $\mu$ under the blowdown.
\end{proof}

In order to show that the diffeomorphisms claimed in the above lemma do not exist, we use the Heegaard-Floer correction term $d(M,\mathfrak{t})\in \mathbb{Q}$ associated to a rational homology sphere $M$ with a Spin$^c$ structure~$\mathfrak{t}$~\cite{Ozsvath-Szabo:2003-2}. Recall that for any knot $K$ in $S^3$, there is a non-increasing sequence of non-negative integers $\{V_i(K)\}_{i \geq 0}$, introduced by Rasmussen \cite{Rasmussen:2003-1}. Ni and Wu showed that the correction term of $3$-manifolds obtained by surgeries on knots can be computed using this sequence \cite{Ni-Wu:2015-1}. Here we only state the formula for the integral surgery.

\begin{proposition}[{\cite[Proposition 1.6 and Remark 2.10]{Ni-Wu:2015-1}}]\label{proposition:Viformula} If $n$ is a positive integer and $U$ is the unknot, then for any knot $K$,
$$ d(S^3_n(K),\mathfrak{t}_i) = d(S^3_n(U),\mathfrak{t}_i)-2\max \{V_{i}(K),V_{n-i}(K)\} .$$
\end{proposition}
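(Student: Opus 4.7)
Since Proposition~\ref{proposition:Viformula} is taken directly from \cite{Ni-Wu:2015-1}, the proof in this paper presumably amounts to a citation. To outline how I would reprove it from scratch, the plan is to apply Ozsv\'ath--Szab\'o's integer surgery formula, which identifies $HF^+(S^3_n(K),\mathfrak{t}_i)$ with the homology of a mapping cone
\[\mathbb{A}^+_\bullet \xrightarrow{D_i^+} \mathbb{B}^+_\bullet\]
built from the knot Floer complex $CFK^\infty(K)$. Here $\mathbb{A}^+_\bullet = \bigoplus_{s\in\Z} A^+_{i+ns}$ with each $A^+_s$ a specific subquotient of $CFK^\infty$ (the one corresponding to $\max(i,j-s)\geq 0$ in the $(i,j)$-plane), and $\mathbb{B}^+_\bullet = \bigoplus_{s\in\Z} B^+_{i+ns}$ with each $B^+_s$ a copy of $CF^+(S^3)$. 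The differential has a ``vertical'' component $v_s\colon A^+_s \to B^+_s$ and a ``horizontal'' component $h_s\colon A^+_s \to B^+_{s+n}$, both induced by the natural inclusion maps on $CFK^\infty$.

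The first step is to reinterpret $V_s(K)$ algebraically: it is the smallest non-negative integer such that $U^{V_s}$ carries the bottom of the tower of $H_*(A^+_s)$ into the image of $HF^+(S^3)$ under $v_s$. Equivalently, $V_s(K)$ records the $U$-power by which the vertical map fails to be a grading-preserving quasi-isomorphism onto the tower of $B^+_s$.

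The second step is a mapping-cone chase to read off $d(S^3_n(K),\mathfrak{t}_i)$. For $|s|$ large, both $v_{i+ns}$ and $h_{i+n(s-1)}$ are isomorphisms on the tower part, so their contributions cancel in pairs and only finitely many summands near $s=0$ matter. The single surviving infinite tower has its bottom grading pulled down from the corresponding unknot grading by the more severe of the two obstructions feeding into $B^+_i$, namely $V_i(K)$ from $v_i\colon A^+_i \to B^+_i$ and, via the conjugation symmetry of $CFK^\infty$, $V_{n-i}(K)$ from $h_{i-n}\colon A^+_{i-n} \to B^+_i$. This yields the $-2\max\{V_i(K), V_{n-i}(K)\}$ correction relative to $d(S^3_n(U),\mathfrak{t}_i)$.

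The main obstacle is the grading bookkeeping: one has to track the absolute $\mathbb{Q}$-grading shifts on the summands $A^+_s$ and $B^+_s$ (which depend on $n$, $i$, and $s$), verify that the unknot base case has $d(S^3_n(U),\mathfrak{t}_i)$ matching the formula with $V_i(U)=V_{n-i}(U)=0$, and invoke the conjugation symmetry at precisely the right place so that the horizontal obstruction at $s=i-n$ is identified with $V_{n-i}(K)$ rather than a different invariant.
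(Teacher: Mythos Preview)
Your assessment is correct: the paper does not prove this proposition at all but simply quotes it from \cite{Ni-Wu:2015-1}, so there is no argument to compare against. Your sketch of the underlying Ni--Wu argument via the integer surgery mapping cone, with the tower correction governed by $\max\{V_i,V_{n-i}\}$ and the horizontal map identified with $V_{n-i}$ through conjugation symmetry, is an accurate outline of how that result is established.
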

Here we are using the identification $\varphi\colon \operatorname{Spin}^c(S^3_{n}(K))\to \{0,1, \dots, n-1\}$ given in \cite{Ozsvath-Szabo:2011-1} so that the $\operatorname{Spin}^c$ structure that corresponds to $i\in  \{0,1, \dots, n-1\}$ under $\varphi$ is denoted by $\mathfrak{t}_i$. We recall some facts about this identification (see \cite[Appendix~B]{Cochran-Horn:2015-1}). A free transitive action by $H^2
(S^3_{n}(K); \Z)$ on $\operatorname{Spin}^c(S^3_{n}(K))$, denoted by $\mathfrak{t} + x$ for $x \in H^2
(S^3_{n}(K); \Z)$ and $\mathfrak{t} \in  \operatorname{Spin}^c(S^3_{n}(K))$, is given as follows
$$\mathfrak{t}_j = \mathfrak{t}_i + (j-i)\cdot PD[[\mu]],$$
where $\mu$ is the positively oriented meridian of $K$. When $n$ is even, the spin structures of $S^3_{n}(K)$ are $\mathfrak{t}_0$ and $\mathfrak{t}_{\frac{n}{2}}$ among the $\operatorname{Spin}^c$ structures of $S^3_{n}(K)$. Moreover, the correction terms for lens spaces are computed in \cite{Ozsvath-Szabo:2003-2} for all Spin$^c$ structures. In particular, we have the following.

\begin{proposition}[{\cite[Proposition 4.8]{Ozsvath-Szabo:2003-2}}]\label{prop:dlens} If $n$ is a positive integer and $U$ is the unknot, then
$$ d(S^3_n(U),\mathfrak{t}_i) = \frac{(n-2i)^2}{4n}-\frac{1}{4}.$$
\end{proposition}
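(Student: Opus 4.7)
My plan is to apply the standard negative-definite cobordism technique of \cite{Ozsvath-Szabo:2003-2}. First I would reverse orientation: $-S^3_n(U) = -L(n,1)$ bounds the negative-definite $4$-manifold $X$ obtained by attaching a $2$-handle to $D^4$ along the unknot with framing $-n$; equivalently, $X$ is the disk bundle over $S^2$ with Euler number $-n$. Then $b_2(X) = 1$, the intersection form is $(-n)$, and since $X$ is simply connected every $\operatorname{Spin}^c$ structure on $\partial X$ extends to one on $X$.

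Next I would invoke the correction term inequality of \cite{Ozsvath-Szabo:2003-2}: for any $\operatorname{Spin}^c$ extension $\mathfrak{s}$ on $X$ of $\mathfrak{t} \in \operatorname{Spin}^c(\partial X)$,
$$c_1(\mathfrak{s})^2 + b_2(X) \leq 4\, d(\partial X, \mathfrak{t}),$$
with equality realized by some extension. I would parametrize $H^2(X; \Z) \cong \Z$ using a dual generator $g^*$ with $(g^*)^2 = -1/n$; the characteristic classes are exactly the $m g^*$ with $m \equiv n \pmod{2}$. A short Poincar\'e--Lefschetz calculation shows that two extensions of the same boundary $\operatorname{Spin}^c$ structure have $c_1$'s differing by a multiple of $2n g^*$, because the image of $H^2(X, \partial X; \Z) \to H^2(X; \Z)$ is $n\Z \cdot g^*$. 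Hence each boundary $\operatorname{Spin}^c$ class has a unique extremal representative $m g^*$ with $m \in (-n, n]$, and the maximum value $c_1^2 = -m^2/n$ yields
$$d(-L(n,1), \mathfrak{t}) = \frac{1}{4} - \frac{m^2}{4n}.$$

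Finally I would apply the identity $d(-Y, \mathfrak{t}) = -d(Y, \mathfrak{t})$ and verify that under the labelling $\operatorname{Spin}^c(S^3_n(U)) \cong \Z/n$ of \cite{Ozsvath-Szabo:2011-1} adopted in the excerpt, the $\operatorname{Spin}^c$ structure $\mathfrak{t}_i$ corresponds to the extremal $m(i) = n - 2i$, whence
$$d(S^3_n(U), \mathfrak{t}_i) = \frac{(n-2i)^2}{4n} - \frac{1}{4},$$
as claimed. The main subtlety, and the step most prone to sign errors, is reconciling the two natural enumerations of $\operatorname{Spin}^c$ structures on $L(n,1)$: the intrinsic one via the action of $PD[\mu]$ used in the statement, and the one induced by extensions across the $2$-handle. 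Matching the two requires tracking the identification through the handle attachment, but once this is done the remaining computation is routine.
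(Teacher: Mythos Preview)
The paper does not give its own proof of this proposition; it is simply quoted from \cite{Ozsvath-Szabo:2003-2}. In that reference the formula for $d(L(p,q),\mathfrak{t}_i)$ is obtained recursively from the surgery exact triangle relating $L(p,q)$ to $L(q,r)$, with the base case $d(S^3)=0$; specialising to $q=1$ yields the stated expression.

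Your cobordism approach is natural but, as written, proves only one inequality. The Ozsv\'ath--Szab\'o bound for a negative-definite filling $X$ of $Y$ says that $c_1(\mathfrak{s})^2+b_2(X)\le 4\,d(Y,\mathfrak{t})$ for \emph{every} extension $\mathfrak{s}$; maximising over extensions gives a \emph{lower} bound for $d(-L(n,1),\mathfrak{t})$, hence an \emph{upper} bound
\[
d\bigl(S^3_n(U),\mathfrak{t}_i\bigr)\ \le\ \frac{(n-2i)^2}{4n}-\frac14.
\]
Your assertion ``with equality realized by some extension'' is not a general theorem: sharpness of the disk bundle is a special fact that requires independent justification. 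One way to supply it is to use that both $S^3$ and $L(n,1)$ are $L$-spaces and that the $2$-handle cobordism induces a nonzero map on $\widehat{HF}$ in each $\operatorname{Spin}^c$ structure, so the absolute grading shift formula computes $d$ exactly; another is to invoke the recursive computation that constitutes the original proof. Without one of these inputs your argument is incomplete. The $\operatorname{Spin}^c$ bookkeeping you flag at the end is a genuine (if routine) point, but it is not the main gap.
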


Recall that for the $(p,q)$-torus knot $T_{p,q}$, we can define the gap
counting function $I_{p,q}$ of the semigroup $\Gamma_{p,q} = \langle p,q \rangle =\{ip+jq\mid i,j\geq 0\}
\subseteq \mathbb{Z}_{\geq 0}$ as follows
$$I_{p,q}(j) = \#\left(\mathbb{Z}_{\geq j} \smallsetminus \Gamma_{p,q}\right).$$

Borodzik and Livingston \cite{Borodzik-Livingston:2014-1} proved that $V_i$ of torus knots can be computed using the gap counting function.

\begin{proposition}[{\cite[Propositions 4.4 and 4.6]{Borodzik-Livingston:2014-1}}]\label{prop:vigap} If $p$ and $q$ are coprime positive integers, then
$$ V_j(T_{p,q}) = I_{p,q}\left(j+\frac{(p-1)(q-1)}{2}\right).$$
\end{proposition}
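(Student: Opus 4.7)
The plan is to exploit the L-space property of positive torus knots in order to reduce the computation of $V_j(T_{p,q})$ to a purely combinatorial statement about the numerical semigroup $\Gamma_{p,q}$. Specifically, $T_{p,q}$ is an L-space knot, since positive integer surgery of sufficiently large coefficient on $T_{p,q}$ yields a lens space. By the Ozsv\'{a}th--Szab\'{o} classification of L-space knots, the full knot Floer chain complex $CFK^\infty(T_{p,q})$ is determined by the Alexander polynomial and takes the form of a ``staircase'' in the $(i,j)$-plane, whose successive step sizes are encoded by the exponents appearing in the symmetrized Alexander polynomial.

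I would next use the classical identity
$$\Delta_{T_{p,q}}(t)=\frac{(1-t^{pq})(1-t)}{(1-t^p)(1-t^q)}$$
to observe that after multiplying by $t^{-g}$ with $g=(p-1)(q-1)/2$, the nonzero exponents of the symmetrized Alexander polynomial correspond bijectively to the elements of $\Gamma_{p,q}$ inside $\{0,\dots,2g\}$, with coefficients alternating in sign. Consequently the corners of the $CFK^\infty$ staircase biject with the elements (and dually the gaps) of $\Gamma_{p,q}$. For L-space knots, $V_j$ has a direct reading from the staircase: it counts the corners lying above height $j$ in the appropriate quadrant, which under the semigroup dictionary becomes exactly the count of $n\notin\Gamma_{p,q}$ with $n\ge j+g$, i.e., $I_{p,q}(j+g)$.

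The main obstacle will be the careful bookkeeping needed to reconcile the Alexander indexing of the staircase with the semigroup indexing of $\Gamma_{p,q}$. The required shift by $g$ is dictated by the normalization of $\Delta_{T_{p,q}}$ and by Rasmussen's definition of $V_j$ via the sub-quotient complex $A_j^-$ of $CFK^\infty$. Verifying that the alternation of elements and gaps of $\Gamma_{p,q}$ produces exactly the alternating signs in $\Delta_{T_{p,q}}$ uses the classical symmetry that $\Gamma_{p,q}$ is a symmetric semigroup, namely $n\in\Gamma_{p,q}\iff 2g-1-n\notin\Gamma_{p,q}$ for $0\le n\le 2g-1$. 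As an alternative route, one could bypass the staircase entirely by applying the Ni--Wu formula (Proposition~\ref{proposition:Viformula}) to reduce $V_j(T_{p,q})$ to a difference of $d$-invariants of large integral surgeries on $T_{p,q}$ (which are Seifert fibered over $S^2$) and then simplifying this difference into $I_{p,q}(j+g)$ using the recursive formula for lens space $d$-invariants from Proposition~\ref{prop:dlens} together with the Dedekind-type combinatorial identities associated to $\Gamma_{p,q}$.
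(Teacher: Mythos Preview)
The paper does not give its own proof of this proposition: it is stated as a citation of \cite[Propositions~4.4 and~4.6]{Borodzik-Livingston:2014-1} and used as a black box. There is therefore no argument in the paper to compare your proposal against.

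That said, your sketch is a faithful outline of the Borodzik--Livingston argument. The essential inputs are exactly the ones you identify: $T_{p,q}$ is an L-space knot, so $CFK^\infty(T_{p,q})$ is a staircase complex determined by the Alexander polynomial; the exponents of $\Delta_{T_{p,q}}$ are governed by the semigroup $\Gamma_{p,q}$; and for a staircase complex the invariants $V_j$ are read off directly as the number of vertical steps above the relevant height, which translates into the gap count $I_{p,q}(j+g)$. The shift by $g=(p-1)(q-1)/2$ and the use of the symmetry $n\in\Gamma_{p,q}\iff 2g-1-n\notin\Gamma_{p,q}$ are the correct bookkeeping devices. Your alternative route via Ni--Wu and $d$-invariants of Seifert fibered surgeries would also work in principle, but is considerably more circuitous than the direct staircase reading; the first approach is the one actually taken in the cited source.
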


Using the above proposition we get the following.
\begin{proposition}\label{prop:vitorus} Let $w$ be a positive even integer. If $T_{w,w+1}$ is the $(w,w+1)$-torus knot, then
$$V_0(T_{w,w+1})= V_1(T_{w,w+1})+1 =\frac{w^2+2w}{8}\ \text{ and }\ V_{\frac{w^2}{2}-1}(T_{w,w+1})=V_{\frac{w^2}{2}}(T_{w,w+1})=0.$$
\end{proposition}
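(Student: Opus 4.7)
The plan is to apply the gap-counting formula of Proposition~\ref{prop:vigap}, which reduces the problem to the explicit computation of $I_{w,w+1}$ at the four inputs
\[
\tfrac{w(w-1)}{2}, \quad \tfrac{w(w-1)}{2} + 1, \quad w^2 - \tfrac{w}{2} - 1, \quad w^2 - \tfrac{w}{2},
\]
corresponding to $V_0$, $V_1$, $V_{w^2/2 - 1}$, and $V_{w^2/2}$, respectively. To organize the count we first record a convenient description of $\Gamma_{w,w+1}$: writing $n = qw + r$ with $0 \le r < w$, the identity $aw + b(w+1) = (a+b)w + b$ shows that $n \in \Gamma_{w,w+1}$ if and only if $q \ge r$. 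Hence $n$ is a gap of $\Gamma_{w,w+1}$ precisely when $q < r$.

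With $w = 2m$, I would compute $V_0 = I_{w,w+1}(w(w-1)/2)$ by counting pairs $(q, r)$ with $0 \le q < r < w$ and $qw + r \ge w(w-1)/2$. The inequality rearranges to $q \ge m - 1/2 - r/(2m)$, and splitting on $r = 0$, $1 \le r \le m - 1$, $r = m$, and $m + 1 \le r \le 2m - 1$ shows that the number of gaps strictly less than $w(w-1)/2$ equals
\[
\sum_{r=1}^{m-1} r \;+\; (m-1) \;+\; \sum_{r=m+1}^{2m-1}(m-1) \;=\; \tfrac{(m-1)m}{2} + (m-1) + (m-1)^2 \;=\; \tfrac{3m(m-1)}{2}.
\]
Subtracting from the total number of gaps $\binom{w}{2} = m(2m-1)$ (which is the classical Sylvester count $(p-1)(q-1)/2$) yields $V_0 = m(m+1)/2 = (w^2 + 2w)/8$, as required.

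The identity $V_0 = V_1 + 1$ then reduces to checking that $w(w-1)/2$ is itself a gap. Indeed, $w(w-1)/2 = (m-1)w + m$ satisfies the gap condition $q = m - 1 < m = r$, so passing from \emph{gaps $\ge w(w-1)/2$} to \emph{gaps $\ge w(w-1)/2 + 1$} drops exactly one element. This gives $V_1 = V_0 - 1$.

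Finally, the Frobenius number of $\Gamma_{w,w+1}$ equals $w(w+1) - w - (w+1) = w^2 - w - 1$, so every integer at least $w^2 - w$ belongs to $\Gamma_{w,w+1}$ and $I_{w,w+1}$ vanishes on such inputs. For $w \ge 2$ both $w^2 - w/2 - 1$ and $w^2 - w/2$ exceed $w^2 - w$, whence $V_{w^2/2 - 1} = V_{w^2/2} = 0$. The only mildly delicate step in the whole argument is the casework in the $V_0$ computation, where one must be careful that the boundary case $r = m$ (the unique $r$ for which $(w(w-1)/2 - r)/w$ is an integer) contributes $m-1$ rather than $m$ valid values of $q$; once the parity of $w$ is tracked, the remainder of the argument is entirely elementary.
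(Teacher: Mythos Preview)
Your proof is correct and follows essentially the same approach as the paper: both reduce to Proposition~\ref{prop:vigap}, give an explicit description of $\Gamma_{w,w+1}$ (the paper writes it as $\{iw+j : 0\le j\le i\le w-2\}\cup\Z_{\ge w^2-w}$, which is equivalent to your division-with-remainder criterion), and then perform an elementary count. The only cosmetic difference is in the $V_0$ computation: the paper counts elements of $\Gamma$ in the interval $[(w^2-w)/2,\,w^2-w-1]$ and subtracts from the interval length, whereas you count gaps below $(w^2-w)/2$ and subtract from the total $\binom{w}{2}$; both arrive at $(w^2+2w)/8$ by the same arithmetic in complementary form.
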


\begin{proof}It is a routine computation to see that 
$$\Gamma_{w,w+1} = \{i\cdot w+j \mid 0 \leq j \leq i \leq w-2\} \cup \mathbb{Z}_{\geq w^2-w}.$$
Combining the above equation and Proposition~\ref{prop:vigap}, we have
\begin{align*}
V_0(T_{w,w+1}) &= I_{w,w+1}\left(\frac{w^2-w}{2}\right)\\
&=\#\left(\mathbb{Z}_{\geq\frac{w^2-w}{2}} \smallsetminus \Gamma_{w,w+1}\right)\\
&=\#\bigg(\left\{i \mid \frac{w^2-w}{2} \leq i \leq w^2-w-1\right\}\\
&\phantom{=\#\bigg(} \smallsetminus \left\{i\cdot w+j \mid \frac{w}{2} \leq i \leq w-2,~ 0\leq j\leq i\right\}\bigg)\\
&=\frac{w^2-w}{2} - \frac{3w^2-6w}{8}\\ 
&=\frac{w^2+2w}{8}.
\end{align*}
\noindent Moreover, since 
$$\frac{w-2}{2}
\cdot w+\frac{w-2}{2} < \frac{w-1}{2}\cdot w <\frac{w}{2}\cdot w,$$ we see that $\frac{w-1}{2}\cdot {w} \notin \Gamma_{w,w+1}$. Hence we have
$$
V_1(T_{w,w+1}) = I_{w,w+1}\left(1+\frac{w^2-w}{2}\right)=\#\left(\mathbb{Z}_{\geq1+\frac{w^2-w}{2}} \smallsetminus \Gamma_{w,w+1}\right)=V_0(T_{w,w+1})-1.
$$
Lastly, since $w^2-w\leq \frac{2w^2-w-2}{2}$, we have  $\Z_{\geq \frac{2w^2-w-2}{2}}\subseteq\Z_{\geq w^2-w}\subseteq \Gamma_{w,w+1}$
so that
\[
V_{\frac{w^2}{2}-1}(T_{w,w+1}) =\#\left(\mathbb{Z}_{\geq\frac{2w^2-w-2}{2}} \smallsetminus \Gamma_{w,w+1}\right)=0,
\]
and since $\{V_i(K)\}_{i \geq 0}$ forms a non-increasing sequence of non-negative integers, it follows that  $V_{\frac{w^2}{2}}(T_{w,w+1})=0$.
\end{proof}

Now we are ready to show that $K_w$ and $\G(K_w)$ are not isotopic whenever $w$ is even and greater than $2$.

\begin{proposition}\label{prop:evenwinding} Let $K_w$ be the knot shown in Figure~\ref{fig:evenwinding} with even winding number $w>2$. Then $K_w$ is not isotopic to $\G(K_w)$.
\end{proposition}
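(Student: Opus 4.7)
The plan is to derive a contradiction from the self-homeomorphism $\psi$ produced by Lemma~\ref{lem:surgtorus} by comparing Heegaard Floer correction terms on $N := S^3_{w^2}(T_{w,w+1})$. Suppose for contradiction that $K_w$ is isotopic to $\G(K_w)$. Then $\psi$ is an orientation-preserving self-homeomorphism of $N$ with $\psi_*([\mu]) = c\,[\mu]$ in $H_1(N;\Z) \cong \Z/w^2$, where $c := \tfrac{w^2}{2}+1$. Since $w$ is even, a direct computation shows $c^2 \equiv 1 \pmod{w^2}$.

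The first step is to pin down the action of $\psi^*$ on $\operatorname{Spin}^c(N)$. Via Poincaré duality, the induced action on $H^2(N;\Z) \cong \Z/w^2$ is again multiplication by $c$ (using $c = c^{-1}$), so under the identification $\varphi$ the action takes the form $\psi^*(\mathfrak{t}_i) = \mathfrak{t}_{ci + i_0}$ for some offset $i_0 \in \Z/w^2$. Any self-homeomorphism must permute the set of spin structures of $N$, which is $\{\mathfrak{t}_0,\mathfrak{t}_{w^2/2}\}$, and this forces $i_0 \in \{0,\, w^2/2\}$.

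The second step is to compute the four correction terms $d(N,\mathfrak{t}_0)$, $d(N,\mathfrak{t}_{w^2/2})$, $d(N,\mathfrak{t}_1)$, $d(N,\mathfrak{t}_{w^2/2+1})$. For this I would combine Proposition~\ref{prop:dlens} (giving the lens-space values $d(S^3_{w^2}(U),\mathfrak{t}_i) = \tfrac{(w^2-2i)^2}{4w^2} - \tfrac{1}{4}$), Proposition~\ref{proposition:Viformula} (subtracting $2\max\{V_i,V_{w^2-i}\}$), the explicit values from Proposition~\ref{prop:vitorus}, and the observation that the genus $(w^2-w)/2$ of $T_{w,w+1}$ is less than $w^2-1$, so $V_j(T_{w,w+1}) = 0$ for $j \ge w^2/2-1$. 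A short calculation gives
\[
d(N,\mathfrak{t}_0) = -\tfrac{2w+1}{4}, \quad d(N,\mathfrak{t}_{w^2/2}) = -\tfrac{1}{4},
\]
\[
d(N,\mathfrak{t}_1) = -\tfrac{w}{2} + \tfrac{3}{4} + \tfrac{1}{w^2}, \quad d(N,\mathfrak{t}_{w^2/2+1}) = -\tfrac{1}{4} + \tfrac{1}{w^2}.
\]

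The third and final step is to rule out both choices of $i_0$. If $i_0 = w^2/2$, then $\psi^*$ swaps $\mathfrak{t}_0$ and $\mathfrak{t}_{w^2/2}$, forcing $-(2w+1)/4 = -1/4$, which fails for $w > 0$. If $i_0 = 0$, then $\psi^*(\mathfrak{t}_1) = \mathfrak{t}_{c} = \mathfrak{t}_{w^2/2+1}$, so the correction terms must agree; the above values reduce this equality to $-\tfrac{w}{2}+1 = 0$, i.e., $w=2$, contradicting $w>2$. Thus $K_w$ is not isotopic to $\G(K_w)$. The main obstacle is the first step: carefully tracking the action of $\psi^*$ on $\operatorname{Spin}^c(N)$ from the $H_1$-action given by Lemma~\ref{lem:surgtorus}, and in particular using the involution $c^2\equiv 1$ to control the two spin structures. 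Once that is in place, everything reduces to a direct numerical comparison.
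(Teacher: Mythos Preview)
Your proposal is correct and follows essentially the same approach as the paper: both derive a contradiction by using the homeomorphism $\psi$ from Lemma~\ref{lem:surgtorus}, first pinning down $\psi^*(\mathfrak{t}_0)=\mathfrak{t}_0$ by comparing the $d$-invariants of the two spin structures, and then deducing $\psi^*(\mathfrak{t}_1)=\mathfrak{t}_{w^2/2+1}$ and checking that equality of the corresponding correction terms forces $w=2$. Your presentation makes the affine action $\mathfrak{t}_i\mapsto\mathfrak{t}_{ci+i_0}$ and the involution $c^2\equiv 1\pmod{w^2}$ explicit, whereas the paper computes $\psi^*(\mathfrak{t}_1)$ directly via naturality, but the substance is identical.
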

\begin{proof} Suppose that $K_w$ is isotopic to $\G(K_w)$ and $w$ is even and greater than $2$. By Lemma~\ref{lem:surgtorus}, there exists a diffeomorphism 
$$\psi \colon S^3_{w^2}(T_{w,w+1}) \to S^3_{w^2}(T_{w,w+1}),$$ 
such that $\psi_*([\mu]) = \left(\frac{w^2}{2}+1\right)\cdot[\mu],$ where $\mu$ is the meridian of $T_{w,w+1}$. Note that $S^3_{w^2}(T_{w,w+1})$ has two spin structures, $\mathfrak{t}_0$ and $\mathfrak{t}_{\frac{w^2}{2}}$, and using Propositions~\ref{proposition:Viformula}, \ref{prop:dlens}, and \ref{prop:vitorus} we have
\begin{align*}
d(S^3_{w^2}(T_{w,w+1}),\mathfrak{t}_0) &= d(S^3_{w^2}(U),\mathfrak{t}_0)-2\max\{V_{0}(T_{w,w+1}), V_{w^2}(T_{w,w+1})\}\\
&= d(S^3_{w^2}(U),\mathfrak{t}_0)-2V_{0}(T_{w,w+1})\\
&=\frac{w^4}{4w^2}-\frac{1}{4}-\frac{w^2+2w}{4}\\
&=-\frac{1+2w}{4},
\end{align*}
and
\begin{align*}
d(S^3_{w^2}(T_{w,w+1}),\mathfrak{t}_\frac{w^2}{2}) &= d(S^3_{w^2}(U),\mathfrak{t}_\frac{w^2}{2})-2V_{\frac{w^2}{2}}(T_{w,w+1})\\
&=0-\frac{1}{4}-0\\
&=-\frac{1}{4}.
\end{align*}
In particular, $d(S^3_{w^2}(T_{w,w+1}),\mathfrak{t}_0) \neq  d(S^3_{w^2}(T_{w,w+1}),\mathfrak{t}_\frac{w^2}{2})$, which implies that the pull-back of $\mathfrak{t}_0$ under $\psi$ is $\mathfrak{t}_0$ \cite{Ozsvath-Szabo:2003-2}  (see also \cite[Theorem 1.2]{JaNa2016}).  Moreover, by the naturality of the action of $H^2(S^3_{w^2}(T_{w,w+1}); \Z)$ on $\operatorname{Spin}^c(S^3_n(K))$, we have
\begin{align*}
\psi^*(\mathfrak{t}_1) = \psi^* \left(\mathfrak{t}_0 + PD[[\mu]]\right)
&=\psi^* \left(\mathfrak{t}_0\right)+\psi^* \left(PD[[\mu]]\right)\\
&=\mathfrak{t}_0+\left(\frac{w^2}{2}+1\right)\cdot PD[[\mu]]\\ &=\mathfrak{t}_{\frac{w^2}{2}+1}
\end{align*}
hence $d(S^3_{w^2}(T_{w,w+1}),\mathfrak{t}_1) = d(S^3_{w^2}(T_{w,w+1}),\mathfrak{t}_{\frac{w^2}{2}+1})$. Again, by Propositions~\ref{proposition:Viformula}, \ref{prop:dlens}, and \ref{prop:vitorus} we get
\begin{align*}
d(S^3_{w^2}(T_{w,w+1}),\mathfrak{t}_1) &= d(S^3_{w^2}(U),\mathfrak{t}_1)-2\max\{V_{1}(T_{w,w+1}), V_{w^2-1}(T_{w,w+1})\}\\
&= d(S^3_{w^2}(U),\mathfrak{t}_1)-2V_{1}(T_{w,w+1})\\
&=\frac{(w^2-2)^2}{4w^2}-\frac{1}{4}-\frac{w^2+2w}{4}+2\\
&=-\frac{2w^3-3w^2-4}{4w^2},
\end{align*}
and
\begin{align*}
d(S^3_{w^2}(T_{w,w+1}),\mathfrak{t}_{\frac{w^2}{2}+1}) &= d(S^3_{w^2}(U),\mathfrak{t}_{\frac{w^2}{2}+1})-2\max\{V_{\frac{w^2}{2}-1}(T_{w,w+1}),V_{\frac{w^2}{2}+1}(T_{w,w+1})\}\\
&= d(S^3_{w^2}(U),\mathfrak{t}_{\frac{w^2}{2}+1})-2V_{\frac{w^2}{2}-1}(T_{w,w+1})\\
&=\frac{4}{4w^2}-\frac{1}{4}-0\\
&=\frac{1}{w^2}-\frac{1}{4}.
\end{align*}
A quick computation shows that $w=2$, which gives us the desired contradiction.\end{proof}

Finally, we deal with the winding number $2$ case.

\begin{figure}[htb]
\includegraphics[width=.4\textwidth]{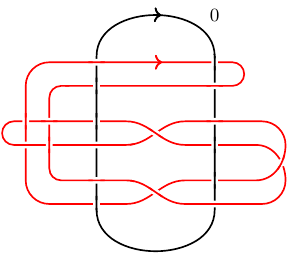}
\caption{A diagram $D$ for the winding number two knot $K$.}\label{fig:windingtwo}
\end{figure}

\begin{proposition}\label{prop:winding2} If $K$ is the winding number $2$ knot in Figure~\ref{fig:windingtwo}, then $K$ is not isotopic to $\G(K)$.
\end{proposition}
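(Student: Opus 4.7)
The plan is to combine Propositions~\ref{prop:prelim} and~\ref{prop:w=-2k} to reduce the statement to an assertion about self-homeomorphisms of a single closed $3$-manifold, and then dispatch that assertion by computer verification.

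Assume for contradiction that $K$ is isotopic to $\G(K)$. Since $K$ has geometric winding number greater than $1$, Proposition~\ref{prop:w=-2k} forces $w^2+2kw=0$ for some $k\in\Z$; with $w=2$ this forces $k=-1$. Applying Proposition~\ref{prop:prelim} with these values and $f=1$ then produces a self-homeomorphism
\[
\phi\colon M(D,1)\longrightarrow M(D,1)
\]
whose induced map on $H_1(M(D,1);\Z)$ satisfies $\phi_*([m_D])=[m_D]$ and $\phi_*([h_D])=[h_D]+(w+k)[m_D]=[h_D]+[m_D]$. A routine computation from the linking matrix $\left(\begin{smallmatrix}0 & 2\\ 2 & 1\end{smallmatrix}\right)$ of the surgery presentation shows that $H_1(M(D,1);\Z)\cong \Z/4$ is generated by $[h_D]$ with $[m_D]=2[h_D]$, so $\phi_*$ acts on $H_1$ as multiplication by $-1$. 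In particular, $\phi$ is not isotopic to the identity.

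It therefore suffices to show that $\Mod(M(D,1))$ is trivial. Recognizing $M(D,1)$ as the result of $(0,1)$-framed Dehn surgery on $S^3$ along the two-component link $(U,D)$, the plan is to use SnapPy to produce a verified hyperbolic structure on $M(D,1)$ via interval arithmetic, and then to enumerate the (necessarily finite) isometry group of that structure. By Mostow rigidity, $\Mod(M(D,1))$ is isomorphic to this isometry group. Checking via SnapPy and Sage that the isometry group is trivial yields the required contradiction with the nontrivial $\phi$ produced above.

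The main obstacle is the rigor of the final computer step: one must certify hyperbolicity rather than merely detect it numerically, and enumerate the isometry group in a manner trustworthy enough to conclude $\Mod(M(D,1))=1$ as a theorem rather than an experimental observation. Once this verification is in place, the remainder of the argument is purely formal consequence of the structural results already established in Section~\ref{sec:glucktwist}.
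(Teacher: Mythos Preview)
Your argument is correct and follows essentially the same route as the paper: reduce via Propositions~\ref{prop:prelim} and~\ref{prop:w=-2k} to a putative nontrivial self-homeomorphism of $M(D,1)$ acting by $[h_D]\mapsto 3[h_D]$ on $H_1\cong\Z/4$, and then rule this out by a SnapPy/Sage verification that $M(D,1)$ is hyperbolic with trivial isometry group. Your added remarks on certifying hyperbolicity via interval arithmetic and invoking Mostow rigidity make explicit the rigor underlying the computer step, but the strategy is identical.
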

\begin{proof} Suppose $K$ is isotopic to $\G(K)$ and let $D$ be the diagram of $K$ described in Figure~\ref{fig:windingtwo}. Then by Propositions~\ref{prop:prelim} and \ref{prop:w=-2k}, there is a diffeomorphism 
$$\phi\colon M(D,1)\to M(D,1),$$ where $\phi_*:H_1(M(D,1);\Z)\to H_1(M(D,{1});\Z)$ is given by $$\phi_*([m_{D}]) = [m_{D}] \text{ and }\phi_*([h_{D}]) = [h_{D}]+ [m_{D}].$$ 
Since $[m_{D}]= 2\cdot [h_{D}]$ in $H_1(M(D,1);\Z)$, we see that $H_1(M(D,1);\Z) \cong \Z/4$ is generated by $[h_{D}]$ and $\phi_*([h_{D}]) = 3\cdot [h_{D}]$. In particular, $\phi$ is not isotopic to the identity. Using SnapPy and Sage \cite{SnapPy}, we can verify that $M(D,1)$ is hyperbolic with no non-trivial diffeomorphisms. This gives us a contradiction and completes the proof.
\end{proof}

\section{Zero winding number}\label{sec:zero}

In this section, we produce knots in $S^1\times S^2$ with zero winding number whose isotopy classes are changed by the Gluck twist. The approach was suggested to us by Charles Livingston after the preprint appeared on the arXiv. Since our examples arise as satellites, we begin by recalling the satellite construction.

Let $P$ be a knot in the solid torus $V=S^1\times D^2$ and $J$ be a knot in $S^1\times S^2$. By choosing a framing of $J$, we obtain an embedding $f_J\colon V=S^1\times D^2 \into S^1\times S^2$ which parameterizes a neighbourhood of $J$.  The knot $f_J(P)\subseteq S^1\times S^2$ is denoted by $P(J)$ and is called the \emph{satellite knot} with \emph{pattern} $P$ and \emph{companion} $J$. Note that we omit the framing in the notation, the reason for which will be clear from the next proposition.

See Figure~\ref{fig:satellite} for an example.  The following result allows us to start with a knot $J$ which is not isotopic to its Gluck twist and produce more knots with the same property. 
We will use the fact that every irreducible, orientable 3-manifold admits a \emph{JSJ-decomposition}, obtained by cutting along mutually disjoint $\pi_1$-injective, embedded tori (see, for example,~\cite[Section~1.6]{Friedlbook}), into submanifolds that are either Seifert-fibered or atoroidal. Here a connected 3-manifold $M$ is called \emph{atoroidal} if every map
$f\colon S^1\times S^1\to M$, which induces an injective homomorphism of fundamental
groups is homotopic to a map of $S^1\times S^1$ into $\partial M$.
\begin{figure}[htb]
\begin{tikzpicture}
\node at (0,0) {\includegraphics[width=.28\textwidth]{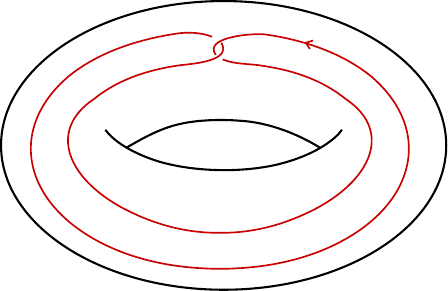}};
\node at (5,0) {\includegraphics[width=.28\textwidth]{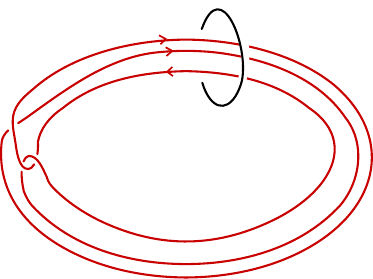}};
\node at (5.8,1.6) {$0$};
\node at (10,0) {\includegraphics[width=.28\textwidth]{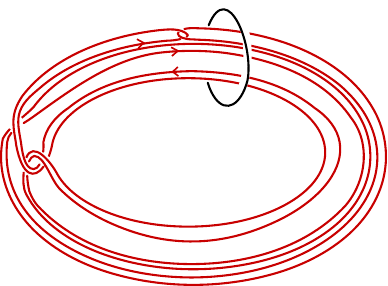}};
\node at (10.8,1.6) {$0$};
\node at (0,-2.1){$P$};
\node at (5,-2.1){$J$};
\node at (10,-2.1){$P(J)$};
\end{tikzpicture}
\caption{Left to right:  The Whitehead pattern $P$, the Mazur knot $J$ in $S^1\times S^2$, to which we assign the blackboard framing, and the satellite knot $P(J)$.}\label{fig:satellite}
\end{figure}

\begin{proposition}\label{prop:satellite}Suppose that $P$ is a knot in $V=S^1\times D^2$ and $J$ is a framed knot in $S^1\times S^2$ such that:
\begin{enumerate}
\item the exteriors $V\smallsetminus \mathring{\nu}(P)$ and $S^1\times S^2 \smallsetminus \mathring\nu(J)$ are both atoroidal; 
\item the inclusion induced maps $\pi_1(\partial V)\to \pi_1(V\smallsetminus P)$ and $\pi_1(\partial \nu(J))\to \pi_1((S^1\times S^2)\smallsetminus \mathring\nu(J))$ are injective; 
\item $J$ is not isotopic to the Hopf knot $S^1\times * \subseteq S^1\times S^2$; and 
\item $0\neq [J] \in H_1(S^1\times S^2;\mathbb{Z})$. 
\end{enumerate}
If the unframed isotopy class of $P(J)$ is not changed by the Gluck twist, then the same is true for the unframed isotopy class of $J$. Here we have added the word ``unframed'' in order to emphasize that these isotopies need not preserve any given framing.\end{proposition}
\begin{proof}
Since $J$ is framed we have a specified embedding $f_J\colon V\into S^1\times S^2$.
  Let $T=f_J(\bdry V)$.  Then $T$ splits $S^1\times S^2\smallsetminus P(J)$ into two pieces diffeomorphic to $S^1\times S^2\smallsetminus J$ and $V\smallsetminus  P$ respectively.  As $V\smallsetminus P$ and $S^1\times S^2\smallsetminus J$ are atoroidal by assumption, $S^1\times S^2\smallsetminus P(J)$ has a single JSJ torus, $T$. Here we used condition (2) to ensure that $T$ is $\pi_1$-injective in $S^1\times S^2 \smallsetminus P(J)$.

Since the Gluck twist $\G$ gives a diffeomorphism from $S^1\times S^2\smallsetminus P(J)$ to $S^1\times S^2\smallsetminus \G(P(J))$, we see that $S^1\times S^2 \smallsetminus \G(P(J))$ 
likewise has a single JSJ-torus, $\G(T)$. Suppose that $\G(P(J))$ is isotopic (and so ambient isotopic) to $P(J)$ via the isotopy $\Phi\colon (S^1\times S^2)\times[0,1]\to S^1\times S^2$. In particular, we know that $\Phi_1(\G(P(J))=P(J)$.
 Observe that $S^1\times S^2\smallsetminus P(J)$ has a 
single JSJ-torus, $\Phi_1(\G(T))$.
  Since JSJ-decompositions are unique up to isotopy, $T$ is isotopic to $\Phi_1(\G(T))$ and so is isotopic to $\G(T)$ itself. The isotopy of $T$ to $\G(T)$ can be extended by the isotopy extension theorem to produce an ambient isotopy of $S^1\times S^2$. We stack this onto the isotopy $\Phi$ and, abusing notation, call the result $\Phi$ as well. This arranges that $\Phi_1(\G(T)) = T$. Note that $\Phi_1(\G(P(J)))$ may no longer be $P(J)$, but this will not be relevant in our analysis.

Notice that $T$ separates $S^1\times S^2$ into the (open) solid torus $f_J(\mathring{V})$ and another piece not diffeomorphic to $\mathring{V}$, by conditions (3) and (4). Similarly, $\G(T)$ separates $S^1\times S^2$ into the open solid torus $\G(f_J(\mathring{V}))$ and another piece not diffeomorphic to $\mathring{V}$.  It follows that $\Phi_1(\G(f_J(V))) = f_J(V)$.  Thus, $\Phi_1\circ \G\circ f_J$ and $f_J$ give two (possibly different) framings of $J$.
Since $\G\circ f_J$ gives a framing of $\G(J)$, we conclude that $\Phi$ gives an ambient isotopy from $J$ to $\G(J)$, as we claimed.\end{proof}

\begin{proposition}\label{prop:winding0} If $P(J)$ is the winding number $0$ knot in Figure~\ref{fig:satellite}, then $P(J)$ is not isotopic to $\G(P(J))$. 
\end{proposition}
\begin{proof} Suppose $P(J)$ is isotopic to $\G(P(J))$. 
We verify the conditions in Proposition~\ref{prop:satellite}. Note that $P$ is the famous Whitehead doubling pattern. The complement of $P$ in $V$ is the complement of the Whitehead link in $S^3$. The Whitehead link is well-known to be hyperbolic (see e.g.~\cite[Proposition~7.4]{purcell-book}), meaning that the complement admits a complete hyperbolic metric, which implies that it is atoroidal (see e.g.~\cite[Theorem~8.13]{purcell-book}). It is a classical fact that $\pi_1(\partial V)\to \pi_1(V\smallsetminus P)$ is injective; see e.g.~\cite[Sections~3G and 4D]{Rolfsen:1976-1}.

The Mazur link, otherwise known as L7a6, is also hyperbolic~\cite{linkinfo}, but this does not suffice to see that the Mazur knot remains hyperbolic in $S^1\times S^2$. Note that the Mazur knot is obtained by performing 0-framed Dehn surgery on one of the two components of the Mazur link, which happens to be symmetric. A routine Kirby calculus argument shows that the complement of the Mazur knot in $S^1\times S^2$ is the result of $(-1,-3, -\tfrac{1}{2})$ surgery on three of the four components of the minimally twisted four-chain link, and this manifold is known to be hyperbolic~\cite[Corollary 3.6]{MPR}. As before, this implies that $(S^1\times S^2)\smallsetminus \mathring\nu(J)$ is atoroidal.

That the Mazur knot $J\subseteq S^1\times S^2$ is not isotopic to the Hopf knot is shown in~\cite[Proposition~2.8]{DNPR} (see also~\cite[Section~4]{livingston-wrappingnumber}). Since it has winding number 1, we see that $0\neq 1=[J]\in H_1(S^1\times S^2;\Z)$. 

It remains to show that the inclusion induced map $\pi_1(\partial \nu(J))\to \pi_1((S^1\times S^2)\smallsetminus \mathring\nu(J))$ is injective. By the symmetry of the Mazur link, we can switch the two components of the diagram in the middle panel of Figure~\ref{fig:satellite}. In other words, we consider instead the case where the 0-framed surgery is performed on the knot labelled $J$ and ask whether the boundary of the solid torus obtained as the exterior of the second component is $\pi_1$-injective into the surgered manifold. This follows from~\cite[Theorem~1.1(c)]{gabai:surgery-knots-solidtori}. To apply the theorem, we need to observe that the Mazur knot is not a torus knot, nor a 1-bridge braid, and that $H_1((S^1\times S^2)\smallsetminus J;\Z)\cong \Z$.

Since all the conditions are met, by Proposition~\ref{prop:satellite}, we conclude that $J$ is isotopic to $\G(J)$. This gives us a contradiction since by Proposition~\ref{prop:w=-2k} any odd winding number knot which is not isotopic to the Hopf knot changes its isotopy class by the Gluck twist. This completes the proof.
\end{proof}

Finally, we prove Theorem~\ref{thm:glucktwist}.

\begin{reptheorem}{thm:glucktwist}For each integer $w$, there exists a knot $K \subseteq S^1\times S^2$ with winding number $w$ such that $K$ is not isotopic to $\G(K)$.

Moreover, if $K\subseteq S^1\times S^2$ has odd winding number and is isotopic to $\G(K)$, then $K$ has geometric winding number $1$.
\end{reptheorem}

\begin{proof}The first sentence follows from Propositions~\ref{prop:w=-2k}, \ref{prop:evenwinding}, \ref{prop:winding2}, and~\ref{prop:winding0}. The second sentence follows from Proposition~\ref{prop:w=-2k}. \end{proof}

\begin{remark}
Proposition~\ref{prop:satellite} could be used to prove the first part of Theorem~\ref{thm:glucktwist} for all winding numbers. We have already shown the winding number $0$ case. For winding number $p=\pm 1$, let $P$ denote the Mazur knot in $S^1\times D^2$ and for $|p|\geq 2$, let $P$ denote the $(p,1)$ torus braid in $S^1\times D^2$. Let $J$ be the knot shown in Figure~\ref{fig:satellite}. Then $P(J)$ is not isotopic to $\G(P(J))$. But we do not see how to recover the second part of Theorem~\ref{thm:glucktwist} by this method. We have chosen to include the results of Section~\ref{sec:even} since the examples of Figure~\ref{fig:evenwinding} are so simple.
\end{remark}


\begin{appendix}

\section{The Kauffman bracket skein module of \texorpdfstring{$S^1\times S^2$}{S1xS2}}\label{appendixA}

In~\cite{Przytycki-1991}, Przytycki defined a $\Z[A^{\pm1}]$-module, $\S(M)$, associated to a 3-manifold $M$ as a quotient of the free module generated by isotopy classes of framed unoriented links in $M$. This is inspired by the skein relation for the classical Kauffman bracket \cite{Kauffman-1990}. In this section, we prove Theorem~\ref{thm:Gluck-and-skein} indicating that this tool, while powerful, cannot be used to differentiate the isotopy class of a knot in $S^1\times S^2$ from its image under a Gluck twist. Recall that given a framed knot $\K$, the framed knot obtained by adding $a$ full right-handed twists to the framing is denoted by $\K^a$.  Throughout this section, all links are framed and unoriented. In particular, winding numbers are only defined modulo $2$.

First we recall the precise definition as well as the structure of $\S(S^1\times S^2)$ appearing in \cite{HoPr-1995}.  
       Let $\mathscr{M}$ be the free $\Z[A^{\pm1}]$-module generated by isotopy classes of framed links in $S^1\times S^2$. Let $\mL$ be a framed link and $\U$ be the 0-framed unknot. The skein 
module, $\S(S^1\times S^2)$, is the quotient of $\mathscr M$ given by setting 
\[
[\mL \sqcup \U] = -(A^{-2}+A^2)[\mL]
\]
and 
\[
[\mL] = A[\mL_0]+A^{-1} [\mL_\infty]
\]
where $\mL$, $\mL_0$, and $\mL_\infty$ are identical outside of a small ball within which they are as in Figure~\ref{fig:skein} and $\sqcup$ denotes a split union.
It is a consequence of these relations that if $\mL$ and $\mL^1$ differ by changing the framing by a full right-handed twist about the meridian of any one component of $\mL$, as in Figure~\ref{fig:L1}, then 
\begin{equation}\label{eqn:gluck-framing}
[\mL^1] = -A^3[\mL].
\end{equation} 
The Gluck twist $\G$ naturally acts on $\S(S^1\times S^2)$ and by abuse of notation we will denote this action by $\G$ as well.
\begin{figure}[htb]
\begin{tikzpicture}
\node at (0,0) {\includegraphics[width=.4\textwidth]{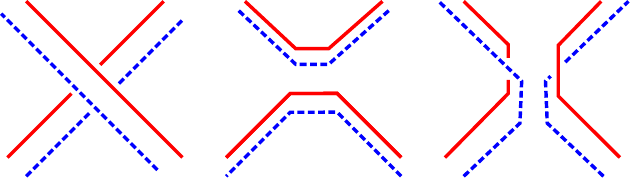}};
\node at (0,-1.2) {{$\mL_0$}};
\node at (-2.4,-1.2) {{$\mL$}};
\node at (2.4,-1.2) {{$\mL_\infty$}};
\end{tikzpicture}
\caption{The skein relation for the module $\S(S^1\times S^2)$. The links are shown in red and the framings are denoted by the push-offs shown in blue.}\label{fig:skein}
\end{figure}
\begin{figure}[htb]
\begin{tikzpicture}
\node at (0,0) {\includegraphics[width=.4\textwidth]{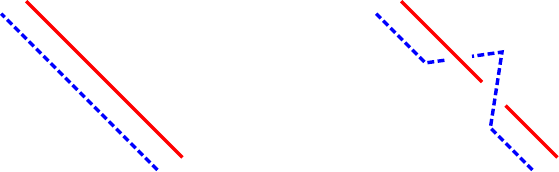}};

\node at (-2,-1.2) {{$\mL$}};
\node at (2,-1.2) {{$\mL^1$}};
\end{tikzpicture}
\caption{The framed link $\mL$.  Changing the framing by a twist about a meridian $\mL^1$.  }\label{fig:L1}
\end{figure}

There is a natural generating set of  $\S(S^1\times S^2)$ given by $z^0$ (the empty diagram), $z$, $z^2$, \ldots, depicted in Figure~\ref{fig:z}. In fact, $\S(S^1\times S^2)$ has a $\Z[A^{\pm1}]$-algebra structure, whose multiplication is given by defining $z^i\cdot z^j := z^{i+j}$ for all $i,j$, and then extending linearly.
Two new generating sets are used in \cite{HoPr-1995}.  The first is given by $$e_0=z^0, e_1 = z^1, \text{ and }e_i = z\cdot e_{i-1} -e_{i-2} \text{ for }i\ge 2.$$  
Note that the recursive step in this formula uses the $\Z[A^{\pm1}]$-module structure on $\S(S^1\times S^2)$.
The latter generating set consists of eigenvectors of $\G$, even regarded in $\S(S^1\times D^2)$.  More precisely, by \cite[Equation(1), p.~67]{HoPr-1995}
\begin{equation}\label{eqn:eigenvector}
\G(e_i) = (-1)^i A^{i^2+2i} e_i.
\end{equation}
\begin{figure}[htb]
\begin{tikzpicture}
\node at (0,0) {\includegraphics[width=.3\textwidth]{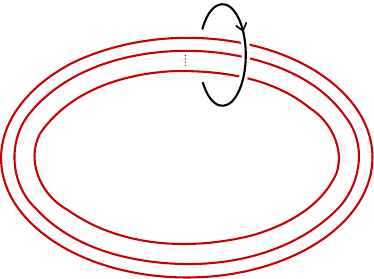}};
\node at (1.2,1.6) {$0$};
\end{tikzpicture}
\caption{A diagram for the $n$-component link $z^n$, with the blackboard framing. Together with the empty diagram $z^0$, the set $\{z^i\}_{i\geq 0}$ forms a generating set for $\S(S^1\times S^2)$.}\label{fig:z}
\end{figure}

The second generating set is given by $e_0' = e_0$, $e_1'=e_1$, $e_2' = e_2$, and
\begin{equation}\label{eqn:recurrence-gen-set}
e_i' = e_i+e_{i-2}'\text{ for }i\ge 3.
\end{equation} 
Hoste-Przytycki prove that for $i\ge 1$, the element $e_i'$ generates the $\frac{\Z[A^{\pm1}]}{(1-A^{2i+4})}$-summand in the following direct sum decomposition~\cite[Theorem 3]{HoPr-1995}.
\begin{equation}\label{eqn:decomp}
\S(S^1\times S^2)\cong \Z[A^{\pm1}] \oplus \bigoplus_{i=1}^\infty \frac{\Z[A^{\pm1}]}{(1-A^{2i+4})}.
\end{equation}
In particular, this means that $e_i'=A^{2i+4}e_i'$ for each $i\geq 1$.
Our analysis will require the following pair of observations.

\begin{proposition}\label{prop:Skein-eigen}
For all $i\geq 0$, we have $$\G(e_{2i}') = e_{2i}' \text{ and }\G(e_{2i+1}') = -A^{2i+3}e_{2i+1}'.$$
\end{proposition}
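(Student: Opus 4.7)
The argument will proceed by induction on $i$, combining the recursion $e_i' = e_i + e_{i-2}'$ from \eqref{eqn:recurrence-gen-set} (so in particular $e_{2i} = e_{2i}' - e_{2i-2}'$ and $e_{2i+1} = e_{2i+1}' - e_{2i-1}'$ whenever these expressions apply), the eigenvector formula \eqref{eqn:eigenvector}, and the torsion relations $(1-A^{2j+4})\,e_j' = 0$ for $j \geq 1$ provided by \eqref{eqn:decomp}. The base cases follow directly from \eqref{eqn:eigenvector}: $\G(e_0') = e_0 = e_0'$; $\G(e_1') = -A^3 e_1 = -A^3 e_1'$; and $\G(e_2') = A^8 e_2 = A^8 e_2' = e_2'$, where the last equality uses $(1-A^8)\,e_2' = 0$.

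For the inductive step in the even case ($i \geq 2$), assume $\G(e_{2i-2}') = e_{2i-2}'$ and compute
\[
\G(e_{2i}') = \G(e_{2i}) + \G(e_{2i-2}') = A^{4i(i+1)}(e_{2i}' - e_{2i-2}') + e_{2i-2}'.
\]
Establishing $\G(e_{2i}') = e_{2i}'$ then reduces to showing that $(A^{4i(i+1)} - 1)(e_{2i}' - e_{2i-2}') = 0$. This follows from the twin factorizations $4i(i+1) = i(4i+4) = (i+1)(4i)$: the first kills the $e_{2i}'$ term via $A^{4i+4}e_{2i}' = e_{2i}'$, and the second kills the $e_{2i-2}'$ term via $A^{4i}e_{2i-2}' = e_{2i-2}'$.

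The odd case is parallel. Assuming $\G(e_{2i-1}') = -A^{2i+1}e_{2i-1}'$ for $i \geq 1$, we obtain
\[
\G(e_{2i+1}') = -A^{4i^2+8i+3}e_{2i+1}' + \bigl(A^{4i^2+8i+3} - A^{2i+1}\bigr)\,e_{2i-1}',
\]
and the required equality with $-A^{2i+3}e_{2i+1}'$ reduces to two elementary divisibility identities: $4i^2 + 6i = i(4i+6)$, which forces $A^{4i^2+8i+3} \equiv A^{2i+3}$ modulo $1 - A^{4i+6}$ and so handles the leading coefficient; and $4i^2 + 6i + 2 = (i+1)(4i+2)$, which causes the coefficient of $e_{2i-1}'$ to vanish modulo $1 - A^{4i+2}$. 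The only real obstacle is the bookkeeping with exponents: once the recurrence is unraveled and the correct torsion relation is applied to each term, everything collapses to the elementary divisibility calculations above, and no ingredient beyond \eqref{eqn:eigenvector} and \eqref{eqn:decomp} is needed.
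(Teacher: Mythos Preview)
Your proof is correct and follows essentially the same approach as the paper's: induction via the recurrence \eqref{eqn:recurrence-gen-set}, the eigenvector formula \eqref{eqn:eigenvector}, and the torsion relations from \eqref{eqn:decomp}, with the same divisibility identities $4i^2+6i=i(4i+6)$ and $4i^2+6i+2=(i+1)(4i+2)$ in the odd case. The paper only writes out the odd step and declares the even case ``virtually identical,'' whereas you spell out both and make the base case for $e_2'$ explicit, but the arguments are otherwise identical.
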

\begin{proof}
We give an inductive argument. The base cases are obvious. Next we consider $e_{2i+1}'$ for $i\geq 1$.  By the formula~\pref{eqn:recurrence-gen-set} for $e_i'$ in terms of $e_i$,
$$
\G(e_{2i+1}') = \G(e_{2i+1}+e_{2i-1}').
$$
By linearity, the inductive assumption, and \pref{eqn:eigenvector}
$$
\G(e_{2i+1}') = -A^{4i^2+8i+3}e_{2i+1}-A^{2i+1}e_{2i-1}'.
$$
Next, since $e_i' = e_i+e_{i-2}'$, it follows that $e_i = e_i'-e_{i-2}'$ for $i\geq 3$.  Thus,
\begin{align*}
\G(e_{2i+1}') &= -A^{4i^2+8i+3}e_{2i+1}'+(A^{4i^2+8i+3}-A^{2i+1})e_{2i-1}'.
\end{align*}
By~\cite[Theorem 3]{HoPr-1995} (see~\eqref{eqn:decomp}), we have $$A^{4i+6}e_{2i+1}' = e_{2i+1}'  \text{ and  } A^{4i+2}e_{2i-1}' = e_{2i-1}'.$$ By rewriting the equation above, we see that
\begin{align*}
\G(e_{2i+1}') &= -A^{(4i+6)i+2i+3}e_{2i+1}'+(A^{(4i+2)(i+1)+2i+1}-A^{2i+1})e_{2i-1}'
\\&=-A^{2i+3}e_{2i+1}'+(A^{2i+1}-A^{2i+1})e_{2i-1}'
\\&=-A^{2i+3}e_{2i+1}'
\end{align*}
which completes the inductive step of the proof.  The proof for the even case is virtually identical, again using~\eqref{eqn:decomp}. \end{proof}

\begin{proposition}\label{prop:skein-split}
If $\K$ is a framed knot in $S^1\times S^2$ with geometric winding number $w$, then $$[\K] = \Sum_{i=0}^w a_i e_i' \in \S(S^1\times S^2) $$ where $a_i\in\Z[A^{\pm 1}]$ for all $i$ and $a_i=0$ whenever $i \centernot{\equiv} w \pmod{2}$. 
\end{proposition}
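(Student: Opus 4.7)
The plan is to write $[\K]$ first as a $\Z[A^{\pm 1}]$-linear combination of the elementary classes $z^k$ of the correct support and parity, and then change basis to $\{e_i'\}$.

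I begin by isotoping $\K$ so that it meets $\{pt\}\times S^2$ transversely in exactly $w$ points, thereby realizing its geometric winding number, and then cut $S^1\times S^2$ along $\{pt\}\times S^2$ to obtain the product $B=S^2\times I$. In $B$ the knot $\K$ becomes a $w$-strand tangle $T$ whose $2w$ endpoints on $\partial B = S^2\sqcup S^2$ are identified in pairs by the closure $(x,0)\sim(x,1)$. A further isotopy collects every crossing of $T$ into a small $3$-ball $B_0\subset B$, outside of which $T$ consists of $w$ parallel vertical strands. Since the relative Kauffman bracket skein module of a $3$-ball with $2w$ marked boundary points is the Temperley--Lieb algebra $TL_w$ over $\Z[A^{\pm 1}]$, the tangle $T|_{B_0}$ can be expanded as $\sum_M c_M[M]$, where $M$ ranges over non-crossing matchings of the $2w$ endpoints and $c_M\in\Z[A^{\pm 1}]$. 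Reattaching the vertical strands and performing the closure gives $[\K]=\sum_M c_M [L_M]$, where $L_M$ is the link in $S^1\times S^2$ obtained from $M$.

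The crucial step is to prove $[L_M]\in\mathrm{span}\{z^k:0\le k\le w,\ k\equiv w\pmod 2\}$ for every non-crossing matching $M$. With $B_0$ chosen as a neighborhood of an arc in $\{pt\}\times S^2$ containing the $w$ marked points, the non-crossing matching inside $B_0$ together with the parallel vertical strands outside $B_0$ can be isotoped into an annular neighborhood of a longitude $S^1\times\{pt\}$, on which $L_M$ admits a crossingless diagram. Any collection of disjoint embedded simple closed curves on an annulus has each component with winding number in $\{-1,0,+1\}$; hence every component of $L_M$ is either an unknot bounding a disk in $S^1\times S^2$ (winding $0$) or is isotopic to the core $S^1\times\{pt\}$ (winding $\pm 1$). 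The skein relations therefore give
\[
[L_M] \;=\; \bigl(-(A^2+A^{-2})\bigr)^\ell\, z^k,
\]
where $k$ is the number of winding-$\pm 1$ components and $\ell$ the number of winding-$0$ components. A short parity argument finishes this step: each winding-$\pm 1$ component meets $\{pt\}\times S^2$ in an odd positive number of points, each winding-$0$ component in an even number of points, and the total number of such intersection points equals $w$, so $k\le w$ and $k\equiv w\pmod 2$.

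The final step, converting from $\{z^k\}$ to $\{e_i'\}$, is elementary. Rewriting the defining recurrence as $z e_{i-1}=e_i+e_{i-2}$ shows by induction that $z^k$ lies in the $\Z$-span of $\{e_i:i\le k,\ i\equiv k\pmod 2\}$, and then $e_i=e_i'-e_{i-2}'$ (rearranging $e_i'=e_i+e_{i-2}'$) rewrites this in the $\{e_i'\}$-basis with the same support constraints. Combining with the previous paragraph produces the desired expansion $[\K]=\sum_{i=0}^w a_i e_i'$ with $a_i=0$ for $i\not\equiv w\pmod 2$. The main obstacle is rigorously justifying the annular-diagram reduction in the third paragraph: one has to make precise that the non-crossing matching inside $B_0$, combined with the parallel vertical strands and the closure identification, produces a link isotopic into an annulus, which is exactly the mechanism forcing each component of $L_M$ to have winding in $\{-1,0,+1\}$.
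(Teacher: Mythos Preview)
Your proof is correct and follows essentially the same approach as the paper's: both arguments resolve all crossings to land in the span of the $z^k$ with $k\le w$ and $k\equiv w\pmod 2$, and then change basis to the $e_i'$ using the recurrences. The paper's version simply observes that each application of the skein relation preserves the winding number modulo $2$ and does not increase the geometric winding number, and that a crossingless diagram in the solid torus is a scalar multiple of some $z^k$; your Temperley--Lieb packaging and explicit annular-diagram reduction make the same mechanism more transparent but are not strictly needed.
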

\begin{proof}
Recall from Section~\ref{sec:glucktwist} that a diagram for $\K$ is a knot $K$ in the complement of the $0$-framed unknot $U$. Now fix a classical diagram for $K\cup U$, namely a pair of immersed curves $D_\K\cup D_U$ in the plane with crossing information.  As $U$ is the unknot, we arrange that $D_U$ has no self-crossings.  We now resolve every self-crossing of $D_\K$ using the skein relation. In doing so we see that $[\K]\in \S(S^1\times S^2)$ is equivalent to a linear combination of classes of links each of which has no self crossings when thought of as links in the complement of $D_U$,  has the same winding number as $\K$ and has geometric winding number less than or equal to the geometric winding number of $\K$.  In other words $$[\K] = \Sum_{i=1}^w b_i z^i \in \S(S^1\times S^2)$$ where $b_i\in \Z[A^{\pm 1}]$ for all $i$, and $b_i = 0$ whenever $i \centernot{\equiv} w\pmod{2}$.

Using the recurrence relations defining $e_i$ in terms of $z^i$ and $e_i'$ in terms of $e_i$ it is now straightforward to check by induction that $$z^i = \Sum_{j=1}^i c_j e_j'$$ where $c_j=0$
whenever $j \centernot{\equiv} i \pmod{2}$. This completes the proof.
\end{proof}

We are now ready to prove Theorem~\ref{thm:Gluck-and-skein}:
\begin{reptheorem}{thm:Gluck-and-skein}
Let $\K$ be a framed knot in $S^1\times S^2$ with geometric winding number $w$ and let $[\K]$ be its class in $\S(S^1\times S^2)$. If $w$ is even then $[\K] = [\G(\K)]$. If $w$ is odd, then $[\K]=[\G(\K)^f]$ for some $f\in \Z$.

In particular, the class in the Kauffman skein bracket module does not distinguish the knot type of $K$ from that of $\G(K)$.
\end{reptheorem}

\begin{proof}
If $\K$ has even geometric winding number $w$, then $$[\K] = \Sum_{i=0}^{w/2} a_i e_{2i}' \in \S(S^1\times S^2)$$ 
where
 $a_i\in\Z[A^{\pm 1}]$ for all $i$ by Proposition~\ref{prop:skein-split}.  By Proposition~\ref{prop:Skein-eigen}, we have 
\[
[\G(\K)]=\G([\K])=\G\left(\Sum_{i=0}^{w/2} a_i e_{2i}'\right)=\Sum_{i=0}^{w/2} a_i \G(e_{2i}')=\Sum_{i=0}^{w/2} a_i e_{2i}'=
[\K] \in \S(S^1\times S^2).
\]
If $\K$ has odd geometric winding number $w$, then 
\[
[\K] = \Sum_{i=0}^{(w-1)/2} a_i e_{2i+1}'\in \S(S^1\times S^2)
\]
by Proposition~\ref{prop:skein-split}.  By Proposition~\ref{prop:Skein-eigen}, 
\begin{align*}
[\G(\K)] =\G([\K])=\G\left(\Sum_{i=0}^{(w-1)/2} a_i e_{2i+1}'\right) &=\Sum_{i=0}^{(w-1)/2} a_i \G(e_{2i+1}')\\
&=\Sum_{i=0}^{(w-1)/2} -A^{2i+3} a_i e_{2i+1}' \in \S(S^1\times S^2).
\end{align*}
Let $f$ be the least common multiple of $\{2i+3\}_{i=0}^{(w-1)/2}$.  This choice implies that for all $i=0,\dots, \frac{w-1}{2}$, we have that $3f$ is an odd multiple of $2i+3$. Hence $3f+2i+3\equiv 0 \pmod{4i+6}$ for all $i$. Recall that $A^{4i+6} e_{2i+1}' = e_{2i+1}'$ from~\eqref{eqn:decomp}. Therefore in $\S(S^1\times S^2)$ we have 
\[
[\G(\K)^f] =\Sum_{i=0}^{(w-1)/2} A^{3f}A^{2i+3} a_i e_{2i+1}' = \Sum_{i=0}^{(w-1)/2} A^{3f+2i+3} a_i e_{2i+1}' = \Sum_{i=0}^{(w-1)/2} a_i e_{2i+1}'
\]
using~\eqref{eqn:gluck-framing}.
We conclude that
$$
[\G(\K)^f] =\Sum_{i=0}^{(w-1)/2} a_i e_{2i+1}' = [\K] \in \S(S^1\times S^2),
$$
completing the proof. \end{proof}

\end{appendix}

\bibliographystyle{alpha}
\bibliography{research.bib}

@preamble{"\def\cprime{$'$} "}

@book {Katok-Fuchsian,
    AUTHOR = {Katok, Svetlana},
     TITLE = {Fuchsian groups},
    SERIES = {Chicago Lectures in Mathematics},
 PUBLISHER = {University of Chicago Press, Chicago, IL},
      YEAR = {1992},
     PAGES = {x+175},
      ISBN = {0-226-42582-7; 0-226-42583-5},
   MRCLASS = {20H10 (30F35)},
  MRNUMBER = {1177168},
MRREVIEWER = {I.\ Kra},
}

@misc{linkinfo,
Author = {Livingston, Charles and Moore, Allison H.},
howpublished = {URL: \url{knotinfo.org}},
Month = {Current Month},
Title = {LinkInfo: Table of Link Invariants},
Year = {Current Year},
}

@book{purcell-book,
 author = {Purcell, Jessica S.},
 title = {Hyperbolic knot theory},
 fseries = {Graduate Studies in Mathematics},
 series = {Grad. Stud. Math.},
 issn = {1065-7339},
 volume = {209},
 isbn = {978-1-4704-5499-9; 978-1-4704-6211-6},
 year = {2020},
 publisher = {Providence, RI: American Mathematical Society (AMS)},
 language = {English},
 doi = {10.1090/gsm/209},
 keywords = {57-02,57K32,57K10,57M50,30F40,57M10,57Q15},
 zbMATH = {7258506},
 Zbl = {1480.57002}
}

@article{laudenbach:annals-1973,
	author = {Laudenbach, F.},
	date-added = {2026-03-05 11:09:29 +0900},
	date-modified = {2026-03-05 11:09:51 +0900},
	doi = {10.2307/1970877},
	fjournal = {Annals of Mathematics. Second Series},
	issn = {0003-486X},
	journal = {Ann. of Math. (2)},
	mrclass = {57A10},
	mrnumber = {314054},
	mrreviewer = {R.\ J.\ Daverman},
	pages = {57--81},
	title = {Sur les {$2$}-sph\`eres d'une vari\'et\'e{} de dimension {$3$}},
	url = {https://doi.org/10.2307/1970877},
	volume = {97},
	year = {1973},
	bdsk-url-1 = {https://doi.org/10.2307/1970877}}

@article{mostow-rigidity,
	author = {Mostow, G. D.},
	fjournal = {Institut des Hautes \'Etudes Scientifiques. Publications Math\'ematiques},
	issn = {0073-8301,1618-1913},
	journal = {Inst. Hautes \'Etudes Sci. Publ. Math.},
	mrclass = {30.47},
	mrnumber = {236383},
	mrreviewer = {J.\ R.\ Cannon},
	pages = {53--104},
	title = {Quasi-conformal mappings in {$n$}-space and the rigidity of hyperbolic space forms},
	url = {http://www.numdam.org/item?id=PMIHES_1968__34__53_0},
	volume = {34},
	year = {1968},
	bdsk-url-1 = {http://www.numdam.org/item?id=PMIHES_1968__34__53_0}}

@article{hatcher-wahl:stabilization-mcg,
	author = {Hatcher, Allen and Wahl, Nathalie},
	doi = {10.1215/00127094-2010-055},
	fjournal = {Duke Mathematical Journal},
	issn = {0012-7094},
	journal = {Duke Math. J.},
	keywords = {57M07,20F28},
	language = {English},
	number = {2},
	pages = {205--269},
	title = {Stabilization for mapping class groups of 3-manifolds},
	volume = {155},
	year = {2010},
	zbl = {1223.57004},
	zbmath = {5834468},
	bdsk-url-1 = {https://doi.org/10.1215/00127094-2010-055}}

@article{Bing-LocallyTame,
	author = {Bing, R. H.},
	doi = {10.2307/1969836},
	fjournal = {Annals of Mathematics. Second Series},
	issn = {0003-486X},
	journal = {Ann. of Math. (2)},
	mrclass = {56.0X},
	mrnumber = {61377},
	mrreviewer = {E.\ E.\ Moise},
	pages = {145--158},
	title = {Locally tame sets are tame},
	url = {https://doi.org/10.2307/1969836},
	volume = {59},
	year = {1954},
	bdsk-url-1 = {https://doi.org/10.2307/1969836}}

@article{MoiseVIII,
	author = {Moise, Edwin E.},
	doi = {10.2307/1969837},
	fjournal = {Annals of Mathematics. Second Series},
	issn = {0003-486X},
	journal = {Ann. of Math. (2)},
	mrclass = {56.0X},
	mrnumber = {61822},
	mrreviewer = {R.\ H.\ Fox},
	pages = {159--170},
	title = {Affine structures in {$3$}-manifolds. {VIII}. {I}nvariance of the knot-types; local tame imbedding},
	url = {https://doi.org/10.2307/1969837},
	volume = {59},
	year = {1954},
	bdsk-url-1 = {https://doi.org/10.2307/1969837}}

@article{Cairns-triangulation,
	author = {Cairns, Stewart S.},
	doi = {10.2307/1968860},
	fjournal = {Annals of Mathematics. Second Series},
	issn = {0003-486X},
	journal = {Ann. of Math. (2)},
	mrclass = {56.0X},
	mrnumber = {2538},
	mrreviewer = {W.\ W.\ Flexner},
	pages = {796--808},
	title = {Homeomorphisms between topological manifolds and analytic manifolds},
	url = {https://doi.org/10.2307/1968860},
	volume = {41},
	year = {1940},
	bdsk-url-1 = {https://doi.org/10.2307/1968860}}

@article{Cerf-embeddings,
	author = {Cerf, Jean},
	fjournal = {Bulletin de la Soci\'et\'e{} Math\'ematique de France},
	issn = {0037-9484},
	journal = {Bull. Soc. Math. France},
	mrclass = {57.00},
	mrnumber = {116351},
	mrreviewer = {S.\ Smale},
	pages = {319--329},
	title = {Groupes d'automorphismes et groupes de diff\'eomorphismes des vari\'et\'es compactes de dimension {$3$}},
	url = {http://www.numdam.org/item?id=BSMF_1959__87__319_0},
	volume = {87},
	year = {1959},
	bdsk-url-1 = {http://www.numdam.org/item?id=BSMF_1959__87__319_0}}

@article{KimZhou,
	author = {Kim, Goansu and Zhou, Wei},
	doi = {10.1142/S0219498822501328},
	fjournal = {Journal of Algebra and its Applications},
	issn = {0219-4988},
	journal = {J. Algebra Appl.},
	mrclass = {20E26 (20E06 20E36 20F10)},
	mrnumber = {4448415},
	number = {7},
	pages = {Paper No. 2250132, 7},
	title = {Class-preserving automorphisms of certain {HNN} extensions},
	url = {https://doi-org.wv-o-ursus-proxy01.ursus.maine.edu/10.1142/S0219498822501328},
	volume = {21},
	year = {2022},
	bdsk-url-1 = {https://doi-org.wv-o-ursus-proxy01.ursus.maine.edu/10.1142/S0219498822501328},
	bdsk-url-2 = {https://doi.org/10.1142/S0219498822501328}}

@inproceedings{Segal,
	author = {Segal, Dan},
	booktitle = {Proceedings of the {S}econd {I}nternational {G}roup {T}heory {C}onference ({B}ressanone, 1989)},
	fjournal = {Rendiconti del Circolo Matematico di Palermo. Serie II. Supplemento},
	issn = {1592-9531},
	journal = {Rend. Circ. Mat. Palermo (2) Suppl.},
	mrclass = {20F28 (20F16)},
	mrnumber = {1068367},
	mrreviewer = {D. J. S. Robinson},
	pages = {265--278},
	title = {On the outer automorphism group of a polycyclic group},
	volume = {23},
	year = {1990}}

@article{otherproperty,
	author = {Yu, Guoliang},
	doi = {10.1007/s002229900032},
	fjournal = {Inventiones Mathematicae},
	issn = {0020-9910},
	journal = {Invent. Math.},
	mrclass = {19K56 (46L80 57R67 58J22)},
	mrnumber = {1728880},
	mrreviewer = {Nigel Higson},
	number = {1},
	pages = {201--240},
	title = {The coarse {B}aum-{C}onnes conjecture for spaces which admit a uniform embedding into {H}ilbert space},
	url = {https://doi.org/10.1007/s002229900032},
	volume = {139},
	year = {2000},
	bdsk-url-1 = {https://doi.org/10.1007/s002229900032}}

@book{Mag74,
	author = {Magnus, Wilhelm},
	mrclass = {20H15 (50C15)},
	mrnumber = {0352287},
	mrreviewer = {E. Vinberg},
	note = {Pure and Applied Mathematics, Vol. 61},
	pages = {xiv+207},
	publisher = {Academic Press [A subsidiary of Harcourt Brace Jovanovich, Publishers], New York-London},
	title = {Noneuclidean tesselations and their groups},
	year = {1974}}

@article{HWZ13,
	author = {Hamilton, Emily and Wilton, Henry and Zalesskii, Pavel A.},
	doi = {10.1112/jlms/jds040},
	fjournal = {Journal of the London Mathematical Society. Second Series},
	issn = {0024-6107},
	journal = {J. Lond. Math. Soc. (2)},
	mrclass = {57N10 (20E26 57M05)},
	mrnumber = {3022716},
	mrreviewer = {Darren D. Long},
	number = {1},
	pages = {269--288},
	title = {Separability of double cosets and conjugacy classes in 3-manifold groups},
	url = {https://doi-org.resources.library.brandeis.edu/10.1112/jlms/jds040},
	volume = {87},
	year = {2013},
	bdsk-url-1 = {https://doi-org.resources.library.brandeis.edu/10.1112/jlms/jds040},
	bdsk-url-2 = {https://doi.org/10.1112/jlms/jds040}}

@article{MO2015,
	author = {Minasyan, Ashot and Osin, Denis},
	doi = {10.1007/s00208-014-1138-z},
	fjournal = {Mathematische Annalen},
	issn = {0025-5831},
	journal = {Math. Ann.},
	mrclass = {20F67 (20E06 20E08 20F65 57M05)},
	mrnumber = {3368093},
	mrreviewer = {Mohammad Shahryari},
	number = {3-4},
	pages = {1055--1105},
	title = {Acylindrical hyperbolicity of groups acting on trees},
	url = {https://doi-org.resources.library.brandeis.edu/10.1007/s00208-014-1138-z},
	volume = {362},
	year = {2015},
	bdsk-url-1 = {https://doi-org.resources.library.brandeis.edu/10.1007/s00208-014-1138-z},
	bdsk-url-2 = {https://doi.org/10.1007/s00208-014-1138-z}}

@article{AMS2016,
	author = {Antol\'{\i}n, Yago and Minasyan, Ashot and Sisto, Alessandro},
	doi = {10.4171/GGD/379},
	fjournal = {Groups, Geometry, and Dynamics},
	issn = {1661-7207},
	journal = {Groups Geom. Dyn.},
	mrclass = {20F65 (20E26 20F28 20F34 20F67 57M05)},
	mrnumber = {3605030},
	mrreviewer = {Vassilis Metaftsis},
	number = {4},
	pages = {1149--1210},
	title = {Commensurating endomorphisms of acylindrically hyperbolic groups and applications},
	url = {https://doi-org.resources.library.brandeis.edu/10.4171/GGD/379},
	volume = {10},
	year = {2016},
	bdsk-url-1 = {https://doi-org.resources.library.brandeis.edu/10.4171/GGD/379},
	bdsk-url-2 = {https://doi.org/10.4171/GGD/379}}

@article{JaNa2016,
	author = {Jabuka, Stanislav and Naik, Swatee},
	doi = {10.4171/JEMS/624},
	fjournal = {Journal of the European Mathematical Society (JEMS)},
	issn = {1435-9855},
	journal = {J. Eur. Math. Soc. (JEMS)},
	mrclass = {57M27 (57M25 57N70)},
	mrnumber = {3519536},
	mrreviewer = {Marco Golla},
	number = {8},
	pages = {1651--1674},
	title = {Periodic knots and {H}eegaard {F}loer correction terms},
	url = {https://doi-org.proxy.uwec.edu/10.4171/JEMS/624},
	volume = {18},
	year = {2016},
	bdsk-url-1 = {https://doi-org.proxy.uwec.edu/10.4171/JEMS/624},
	bdsk-url-2 = {https://doi.org/10.4171/JEMS/624}}

@inproceedings{Neumann:1978-1,
	author = {Neumann, Walter D. and Raymond, Frank},
	booktitle = {Algebraic and geometric topology ({P}roc. {S}ympos., {U}niv. {C}alifornia, {S}anta {B}arbara, {C}alif., 1977)},
	date-added = {2020-06-17 14:07:46 -0400},
	date-modified = {2020-06-17 14:07:55 -0400},
	mrclass = {57N10 (14J17 32B99)},
	mrnumber = {518415},
	mrreviewer = {M. Sebastiani},
	pages = {163--196},
	publisher = {Springer, Berlin},
	series = {Lecture Notes in Math.},
	title = {Seifert manifolds, plumbing, {$\mu $}-invariant and orientation reversing maps},
	volume = {664},
	year = {1978}}

@article{Scott83,
	author = {Scott, Peter},
	doi = {10.1112/blms/15.5.401},
	fjournal = {The Bulletin of the London Mathematical Society},
	issn = {0024-6093},
	journal = {Bull. London Math. Soc.},
	mrclass = {57N10 (22E10 53C20)},
	mrnumber = {705527},
	mrreviewer = {John Hempel},
	number = {5},
	pages = {401--487},
	title = {The geometries of {$3$}-manifolds},
	url = {https://doi-org.resources.library.brandeis.edu/10.1112/blms/15.5.401},
	volume = {15},
	year = {1983},
	bdsk-url-1 = {https://doi-org.resources.library.brandeis.edu/10.1112/blms/15.5.401},
	bdsk-url-2 = {https://doi.org/10.1112/blms/15.5.401}}

@article{Grant_2020,
	author = {Grant, Mark and Sienicka, Agata},
	doi = {10.1017/s0017089520000208},
	issn = {1469-509X},
	journal = {Glasgow Mathematical Journal},
	month = {May},
	note = {to appear, \emph{Glasgow Math.\ Journal}},
	pages = {1--10},
	publisher = {Cambridge University Press (CUP)},
	title = {Isotopy and homeomorphism of closed surface braids},
	url = {http://dx.doi.org/10.1017/S0017089520000208},
	year = {2020},
	bdsk-url-1 = {http://dx.doi.org/10.1017/S0017089520000208},
	bdsk-url-2 = {http://dx.doi.org/10.1017/s0017089520000208}}

@book{Friedlbook,
	author = {Aschenbrenner, Matthias and Friedl, Stefan and Wilton, Henry},
	doi = {10.4171/154},
	isbn = {978-3-03719-154-5},
	mrclass = {57N10 (20F65 57-02 57M27)},
	mrnumber = {3444187},
	mrreviewer = {Thomas Koberda},
	pages = {xiv+215},
	publisher = {European Mathematical Society (EMS), Z\"{u}rich},
	series = {EMS Series of Lectures in Mathematics},
	title = {3-manifold groups},
	url = {https://doi.org/10.4171/154},
	year = {2015},
	bdsk-url-1 = {https://doi.org/10.4171/154}}

@article{McCullough06,
	author = {McCullough, Darryl},
	doi = {10.2140/agt.2006.6.1331},
	fjournal = {Algebraic \& Geometric Topology},
	issn = {1472-2747},
	journal = {Algebr. Geom. Topol.},
	mrclass = {57N10 (57R50)},
	mrnumber = {2253449},
	mrreviewer = {Ken-ichi Ohshika},
	pages = {1331--1340},
	title = {Homeomorphisms which are {D}ehn twists on the boundary},
	url = {https://doi.org/10.2140/agt.2006.6.1331},
	volume = {6},
	year = {2006},
	bdsk-url-1 = {https://doi.org/10.2140/agt.2006.6.1331}}

@article {MPR,
    AUTHOR = {Martelli, Bruno and Petronio, Carlo and Roukema, Fionntan},
     TITLE = {Exceptional {D}ehn surgery on the minimally twisted five-chain
              link},
   JOURNAL = {Comm. Anal. Geom.},
  FJOURNAL = {Communications in Analysis and Geometry},
    VOLUME = {22},
      YEAR = {2014},
    NUMBER = {4},
     PAGES = {689--735},
      ISSN = {1019-8385,1944-9992},
   MRCLASS = {57M25 (57M50 57R65)},
  MRNUMBER = {3263935},
MRREVIEWER = {Mark\ A. C. Powell},
       DOI = {10.4310/CAG.2014.v22.n4.a4},
       URL = {https://doi.org/10.4310/CAG.2014.v22.n4.a4},
}

@article{DNPR,
	author = {Davis, Christopher W. and Nagel, Matthias and Park, JungHwan and Ray, Arunima},
	doi = {10.1112/jlms.12125},
	fjournal = {Journal of the London Mathematical Society. Second Series},
	issn = {0024-6107},
	journal = {J. Lond. Math. Soc. (2)},
	mrclass = {57M27},
	mrnumber = {3847232},
	mrreviewer = {Allison N. Miller},
	number = {1},
	pages = {59--84},
	title = {Concordance of knots in {$S^1\times S^2$}},
	url = {https://doi.org/10.1112/jlms.12125},
	volume = {98},
	year = {2018},
	bdsk-url-1 = {https://doi.org/10.1112/jlms.12125}}

@article{Munkres1960,
	author = {Munkres, James},
	doi = {10.2307/1970228},
	fjournal = {Annals of Mathematics. Second Series},
	issn = {0003-486X},
	journal = {Ann. of Math. (2)},
	mrclass = {57.00},
	mrnumber = {121804},
	mrreviewer = {P. Dedecker},
	pages = {521--554},
	title = {Obstructions to the smoothing of piecewise-differentiable homeomorphisms},
	url = {https://doi.org/10.2307/1970228},
	volume = {72},
	year = {1960},
	bdsk-url-1 = {https://doi.org/10.2307/1970228}}

@article{Munkres1959,
	author = {Munkres, James},
	doi = {10.1090/S0002-9904-1959-10345-1},
	fjournal = {Bulletin of the American Mathematical Society},
	issn = {0002-9904},
	journal = {Bull. Amer. Math. Soc.},
	mrclass = {57.00},
	mrnumber = {112150},
	mrreviewer = {P. Dedecker},
	pages = {332--334},
	title = {Obstructions to the smoothing of piecewise-differentiable homeomorphisms},
	url = {https://doi.org/10.1090/S0002-9904-1959-10345-1},
	volume = {65},
	year = {1959},
	bdsk-url-1 = {https://doi.org/10.1090/S0002-9904-1959-10345-1}}

@article{Moise-affineV,
	author = {Moise, Edwin E.},
	doi = {10.2307/1969769},
	fjournal = {Annals of Mathematics. Second Series},
	issn = {0003-486X},
	journal = {Ann. of Math. (2)},
	mrclass = {56.0X},
	mrnumber = {48805},
	mrreviewer = {S. S. Cairns},
	pages = {96--114},
	title = {Affine structures in {$3$}-manifolds. {V}. {T}he triangulation theorem and {H}auptvermutung},
	url = {https://doi.org/10.2307/1969769},
	volume = {56},
	year = {1952},
	bdsk-url-1 = {https://doi.org/10.2307/1969769}}

@article{Hatcher-Smale,
	author = {Hatcher, Allen E.},
	doi = {10.2307/2007035},
	fjournal = {Annals of Mathematics. Second Series},
	issn = {0003-486X},
	journal = {Ann. of Math. (2)},
	mrclass = {57M99 (57S05)},
	mrnumber = {701256},
	mrreviewer = {R. C. Kirby},
	number = {3},
	pages = {553--607},
	title = {A proof of the {S}male conjecture, {${\rm Diff}(S^{3})\simeq {\rm O}(4)$}},
	url = {https://doi.org/10.2307/2007035},
	volume = {117},
	year = {1983},
	bdsk-url-1 = {https://doi.org/10.2307/2007035}}

@incollection{HM:2013,
	author = {Hong, Sungbok and McCullough, Darryl},
	booktitle = {Geometry and topology down under},
	doi = {10.1090/conm/597/11768},
	mrclass = {57M27},
	mrnumber = {3186669},
	mrreviewer = {Eriko Hironaka},
	pages = {53--63},
	publisher = {Amer. Math. Soc., Providence, RI},
	series = {Contemp. Math.},
	title = {Mapping class groups of 3-manifolds, then and now},
	url = {https://doi.org/10.1090/conm/597/11768},
	volume = {597},
	year = {2013},
	bdsk-url-1 = {https://doi.org/10.1090/conm/597/11768}}

@article{Neshchadim:1996,
	author = {Neshchadim, M. V.},
	doi = {10.1007/BF02367356},
	fjournal = {Sibirski\u{\i} Fond Algebry i Logiki. Algebra i Logika},
	issn = {0373-9252},
	journal = {Algebra i Logika},
	mrclass = {20E06 (20F28)},
	mrnumber = {1444437},
	mrreviewer = {David I. Moldavanski\u{\i}},
	number = {5},
	pages = {562--566, 625},
	title = {Free products of groups that do not have outer normal automorphisms},
	url = {https://doi.org/10.1007/BF02367356},
	volume = {35},
	year = {1996},
	bdsk-url-1 = {https://doi.org/10.1007/BF02367356}}

@article{Bonahon:1983,
	author = {Bonahon, Francis},
	doi = {10.1016/0040-9383(83)90016-2},
	fjournal = {Topology. An International Journal of Mathematics},
	issn = {0040-9383},
	journal = {Topology},
	mrclass = {57M99 (57R50)},
	mrnumber = {710104},
	mrreviewer = {N. V. Ivanov},
	number = {3},
	pages = {305--314},
	title = {Diff\'{e}otopies des espaces lenticulaires},
	url = {https://doi.org/10.1016/0040-9383(83)90016-2},
	volume = {22},
	year = {1983},
	bdsk-url-1 = {https://doi.org/10.1016/0040-9383(83)90016-2}}

@book{Cerf:1968,
	author = {Cerf, Jean},
	mrclass = {57.31},
	mrnumber = {0229250},
	mrreviewer = {N. H. Kuiper},
	pages = {xii+133},
	publisher = {Springer-Verlag, Berlin-New York},
	series = {Lecture Notes in Mathematics, No. 53},
	title = {Sur les diff\'{e}omorphismes de la sph\`ere de dimension trois {$(\Gamma _{4}=0)$}},
	year = {1968}}

@article{friedman-witt:dehn-separating,
	author = {Friedman, John L. and Witt, Donald M.},
	doi = {10.1016/0040-9383(86)90003-0},
	fjournal = {Topology},
	issn = {0040-9383},
	journal = {Topology},
	keywords = {57N10,57R52,81T20,57R50},
	language = {English},
	pages = {35--44},
	title = {Homotopy is not isotopy for homeomorphisms of 3-manifolds},
	volume = {25},
	year = {1986},
	zbl = {0596.57008},
	zbmath = {3960564},
	bdsk-url-1 = {https://doi.org/10.1016/0040-9383(86)90003-0}}

@article{hendriks:dehn-separating,
	author = {Hendriks, Harrie},
	fjournal = {Bulletin de la Soci{\'e}t{\'e} Math{\'e}matique de France. Suppl{\'e}ment. M{\'e}moires},
	issn = {0583-8665},
	journal = {Bull. Soc. Math. Fr., Suppl., M{\'e}m.},
	keywords = {57N10,55P10,55S35},
	language = {French},
	pages = {81--196},
	title = {Applications de la th{\'e}orie d'obstruction en dimension 3},
	url = {https://eudml.org/doc/94775},
	volume = {53},
	year = {1977},
	zbl = {0415.57006},
	zbmath = {3645984},
	bdsk-url-1 = {https://eudml.org/doc/94775}}

@book{laudenbach:homotopie-et-isotopie,
	author = {Laudenbach, Francois},
	fseries = {Ast{\'e}risque},
	issn = {0303-1179},
	keywords = {57N10,55P10,57M05,55P15},
	language = {French},
	publisher = {Soci{\'e}t{\'e} Math{\'e}matique de France (SMF), Paris},
	series = {Ast{\'e}risque},
	title = {Topologie de la dimension trois: homotopie et isotopie},
	volume = {12},
	year = {1974},
	zbl = {0293.57004},
	zbmath = {3459583}}

@incollection{mccullough:nonprime-mcg,
	author = {McCullough, Darryl},
	booktitle = {Groups of self-equivalences and related topics. Proceedings of a conference held in Montr\'eal, Canada, August 8--12, 1988},
	isbn = {3-540-52658-7},
	keywords = {57N10,57R50,20F34},
	language = {English},
	publisher = {Berlin etc.: Springer-Verlag},
	title = {Topological and algebraic automorphisms of 3-manifolds},
	year = {1990},
	zbl = {0713.57009},
	zbmath = {4173918}}

@article{McCullough:2002-1,
	author = {McCullough, Darryl},
	date-added = {2019-10-01 15:22:52 -0400},
	date-modified = {2019-10-01 15:23:02 -0400},
	doi = {10.1112/S0024610701002782},
	fjournal = {Journal of the London Mathematical Society. Second Series},
	issn = {0024-6107},
	journal = {J. London Math. Soc. (2)},
	mrclass = {57M50 (57M60 57N10)},
	mrnumber = {1875143},
	mrreviewer = {Lee Mosher},
	number = {1},
	pages = {167--182},
	title = {Isometries of elliptic 3-manifolds},
	url = {https://doi.org/10.1112/S0024610701002782},
	volume = {65},
	year = {2002},
	bdsk-url-1 = {https://doi.org/10.1112/S0024610701002782}}

@misc{Perelman:2002-1,
	author = {Perelman, G.~},
	date-added = {2019-10-01 14:52:27 -0400},
	date-modified = {2019-10-01 14:52:27 -0400},
	howpublished = {ar{X}iv:0211159},
	title = {The entropy formula for the {R}icci flow and its geometric applications},
	year = {2002}}

@article{AKT2003,
	author = {Allenby, R. B. J. T. and Kim, Goansu and Tang, C. Y.},
	fjournal = {Algebra Colloquium},
	issn = {1005-3867},
	journal = {Algebra Colloq.},
	mrclass = {20E26 (20F28 20F34 57M07)},
	mrnumber = {1980432},
	mrreviewer = {Dimitrios Varsos},
	number = {2},
	pages = {121--126},
	title = {On the residual finiteness of {${\rm Out}(\pi_1(M))$} of certain {S}eifert manifolds},
	volume = {10},
	year = {2003}}

@article{ATK:2009,
	author = {Allenby, R. B. J. T. and Kim, Goansu and Tang, C. Y.},
	doi = {10.1016/j.jalgebra.2009.05.015},
	fjournal = {Journal of Algebra},
	issn = {0021-8693},
	journal = {J. Algebra},
	mrclass = {57M05 (20F34 57N10)},
	mrnumber = {2537665},
	mrreviewer = {Bruno P. Zimmermann},
	number = {4},
	pages = {957--968},
	title = {Outer automorphism groups of {S}eifert 3-manifold groups over non-orientable surfaces},
	url = {https://doi.org/10.1016/j.jalgebra.2009.05.015},
	volume = {322},
	year = {2009},
	bdsk-url-1 = {https://doi.org/10.1016/j.jalgebra.2009.05.015}}

@article{Scott85,
	author = {Scott, Peter},
	doi = {10.1016/0040-9383(85)90006-0},
	fjournal = {Topology. An International Journal of Mathematics},
	issn = {0040-9383},
	journal = {Topology},
	mrclass = {57N10 (57N37)},
	mrnumber = {815484},
	mrreviewer = {J. H. Rubinstein},
	number = {3},
	pages = {341--351},
	title = {Homotopy implies isotopy for some {S}eifert fibre spaces},
	url = {https://doi-org.proxy.uwec.edu/10.1016/0040-9383(85)90006-0},
	volume = {24},
	year = {1985},
	bdsk-url-1 = {https://doi-org.proxy.uwec.edu/10.1016/0040-9383(85)90006-0},
	bdsk-url-2 = {https://doi.org/10.1016/0040-9383(85)90006-0}}

@article{BO1991,
	author = {Boileau, Michel and Otal, Jean-Pierre},
	doi = {10.1007/BF01243906},
	fjournal = {Inventiones Mathematicae},
	issn = {0020-9910},
	journal = {Invent. Math.},
	mrclass = {57N10 (57N37 57R50 57R52)},
	mrnumber = {1123375},
	mrreviewer = {Paul Igodt},
	number = {1},
	pages = {85--107},
	title = {Scindements de {H}eegaard et groupe des hom\'{e}otopies des petites vari\'{e}t\'{e}s de {S}eifert},
	url = {https://doi-org.proxy.uwec.edu/10.1007/BF01243906},
	volume = {106},
	year = {1991},
	bdsk-url-1 = {https://doi-org.proxy.uwec.edu/10.1007/BF01243906},
	bdsk-url-2 = {https://doi.org/10.1007/BF01243906}}

@misc{SnapPy,
	author = {Culler, Marc and Dunfield, Nathan M. and Goerner, Matthias and Weeks, Jeffrey R.},
	howpublished = {Available at \url{http://snappy.computop.org} %(DD/MM/YYYY)},
	title = {Snap{P}y, a computer program for studying the geometry and topology of $3$-manifolds}}

@article{Cochran-Horn:2015-1,
	author = {Cochran, Tim D. and Horn, Peter D.},
	date-added = {2019-08-15 15:33:43 -0400},
	date-modified = {2019-08-15 15:33:52 -0400},
	doi = {10.2140/agt.2015.15.415},
	fjournal = {Algebraic \& Geometric Topology},
	issn = {1472-2747},
	journal = {Algebr. Geom. Topol.},
	mrclass = {57M25 (57N70)},
	mrnumber = {3325742},
	mrreviewer = {Nikolai N. Saveliev},
	number = {1},
	pages = {415--428},
	title = {Structure in the bipolar filtration of topologically slice knots},
	url = {https://doi.org/10.2140/agt.2015.15.415},
	volume = {15},
	year = {2015},
	bdsk-url-1 = {https://doi.org/10.2140/agt.2015.15.415}}

@book{gompf-stipsicz,
	author = {Gompf, Robert E. and Stipsicz, Andr\'{a}s I.},
	doi = {10.1090/gsm/020},
	isbn = {0-8218-0994-6},
	mrclass = {57N13 (14J80 32Q55 57-02 57R17 57R57 57R65)},
	mrnumber = {1707327},
	mrreviewer = {Nikolai N. Saveliev},
	pages = {xvi+558},
	publisher = {American Mathematical Society, Providence, RI},
	series = {Graduate Studies in Mathematics},
	title = {{$4$}-manifolds and {K}irby calculus},
	url = {https://doi.org/10.1090/gsm/020},
	volume = {20},
	year = {1999},
	bdsk-url-1 = {https://doi.org/10.1090/gsm/020}}

@article{Fisher,
	author = {Fisher, Gordon M.},
	doi = {10.2307/1993298},
	fjournal = {Transactions of the American Mathematical Society},
	issn = {0002-9947},
	journal = {Trans. Amer. Math. Soc.},
	mrclass = {54.00},
	mrnumber = {0117712},
	mrreviewer = {E. Dyer},
	pages = {193--212},
	title = {On the group of all homeomorphisms of a manifold},
	url = {https://doi.org/10.2307/1993298},
	volume = {97},
	year = {1960},
	bdsk-url-1 = {https://doi.org/10.2307/1993298}}

@article{GMT2003,
	author = {Gabai, David and Meyerhoff, G. Robert and Thurston, Nathaniel},
	doi = {10.4007/annals.2003.157.335},
	fjournal = {Annals of Mathematics. Second Series},
	issn = {0003-486X},
	journal = {Ann. of Math. (2)},
	mrclass = {57M50 (57-04 57N10)},
	mrnumber = {1973051},
	mrreviewer = {Darryl McCullough},
	number = {2},
	pages = {335--431},
	title = {Homotopy hyperbolic 3-manifolds are hyperbolic},
	url = {https://doi-org.proxy.uwec.edu/10.4007/annals.2003.157.335},
	volume = {157},
	year = {2003},
	bdsk-url-1 = {https://doi-org.proxy.uwec.edu/10.4007/annals.2003.157.335},
	bdsk-url-2 = {https://doi.org/10.4007/annals.2003.157.335}}

@article{Gluck-1962,
	author = {Gluck, Herman},
	doi = {10.2307/1993581},
	fjournal = {Transactions of the American Mathematical Society},
	issn = {0002-9947},
	journal = {Trans. Amer. Math. Soc.},
	mrclass = {55.70},
	mrnumber = {0146807},
	mrreviewer = {Morton Brown},
	pages = {308--333},
	title = {The embedding of two-spheres in the four-sphere},
	url = {https://doi.org/10.2307/1993581},
	volume = {104},
	year = {1962},
	bdsk-url-1 = {https://doi.org/10.2307/1993581}}

@article{HoPr-1995,
	author = {Hoste, Jim and Przytycki, J\'ozef H.},
	doi = {10.1007/BF02572603},
	fjournal = {Mathematische Zeitschrift},
	issn = {0025-5874},
	journal = {Math. Z.},
	mrclass = {57M25 (57N10)},
	mrnumber = {1347158},
	mrreviewer = {Sergey S. Anisov},
	number = {1},
	pages = {65--73},
	title = {The {K}auffman bracket skein module of {$S^1\times S^2$}},
	url = {https://doi.org/10.1007/BF02572603},
	volume = {220},
	year = {1995},
	bdsk-url-1 = {https://doi.org/10.1007/BF02572603}}

@article{Kauffman-1990,
	author = {Kauffman, Louis H.},
	doi = {10.2307/2001315},
	fjournal = {Transactions of the American Mathematical Society},
	issn = {0002-9947},
	journal = {Trans. Amer. Math. Soc.},
	mrclass = {57M25},
	mrnumber = {958895},
	mrreviewer = {Hugh Reynolds Morton},
	number = {2},
	pages = {417--471},
	title = {An invariant of regular isotopy},
	url = {https://doi.org/10.2307/2001315},
	volume = {318},
	year = {1990},
	bdsk-url-1 = {https://doi.org/10.2307/2001315}}

@article{Przytycki-1991,
	author = {Przytycki, J\'ozef H.},
	fjournal = {Bulletin of the Polish Academy of Sciences. Mathematics},
	issn = {0239-7269},
	journal = {Bull. Polish Acad. Sci. Math.},
	mrclass = {57M25 (57N10)},
	mrnumber = {1194712},
	number = {1-2},
	pages = {91--100},
	title = {Skein modules of {$3$}-manifolds},
	volume = {39},
	year = {1991}}

@article{Borodzik-Livingston:2014-1,
	author = {Borodzik, M.~ and Livingston, C.~},
	date-added = {2017-02-02 05:47:15 +0000},
	date-modified = {2017-03-18 07:48:46 +0000},
	fjournal = {Forum of Mathematics. Sigma},
	issn = {2050-5094},
	journal = {Forum Math. Sigma},
	mrclass = {14H50 (57R58)},
	mrnumber = {3347955},
	mrreviewer = {Jelena Kati\"A},
	pages = {e28},
	title = {Heegaard {F}loer homology and rational cuspidal curves},
	volume = {2},
	year = {2014}}

@article{Ozsvath-Szabo:2011-1,
	author = {Ozsv{\'a}th, P.~ and Szab{\'o}, Z.~},
	date-added = {2017-02-02 05:41:41 +0000},
	date-modified = {2017-02-02 05:41:53 +0000},
	doi = {10.2140/agt.2011.11.1},
	fjournal = {Algebraic \& Geometric Topology},
	issn = {1472-2747},
	journal = {Algebr. Geom. Topol.},
	mrclass = {57R58 (57M25 57M27)},
	mrnumber = {2764036},
	mrreviewer = {Hans U. Boden},
	number = {1},
	pages = {1--68},
	title = {Knot {F}loer homology and rational surgeries},
	url = {http://dx.doi.org/10.2140/agt.2011.11.1},
	volume = {11},
	year = {2011},
	bdsk-url-1 = {http://dx.doi.org/10.2140/agt.2011.11.1}}

@book{Rolfsen:1976-1,
	author = {Rolfsen, D.~},
	date-added = {2016-12-27 02:07:39 +0000},
	date-modified = {2016-12-27 02:07:45 +0000},
	mrclass = {55-01},
	mrnumber = {0515288},
	note = {Mathematics Lecture Series, No. 7},
	pages = {ix+439},
	publisher = {Publish or Perish, Inc., Berkeley, Calif.},
	title = {Knots and links},
	year = {1976}}

@article{W3,
	author = {Waldhausen, F.~},
	date-added = {2016-06-19 07:52:39 +0000},
	date-modified = {2016-06-19 07:52:43 +0000},
	fjournal = {Annals of Mathematics. Second Series},
	issn = {0003-486X},
	journal = {Ann. of Math. (2)},
	mrclass = {57.05},
	mrnumber = {0224099},
	mrreviewer = {W. Haken},
	pages = {56--88},
	title = {On irreducible {$3$}-manifolds which are sufficiently large},
	volume = {87},
	year = {1968}}

@article{N,
	author = {Nielsen, J.~},
	date-added = {2016-06-19 07:11:36 +0000},
	date-modified = {2016-06-19 07:11:40 +0000},
	fjournal = {Acta Mathematica},
	issn = {0001-5962},
	journal = {Acta Math.},
	mrclass = {56.0X},
	mrnumber = {0013306},
	mrreviewer = {R. H. Fox},
	pages = {23--115},
	title = {Abbildungsklassen endlicher {O}rdnung},
	volume = {75},
	year = {1943}}

@article{I,
	date-added = {2016-05-18 17:12:11 +0000},
	date-modified = {2016-05-18 17:12:15 +0000}}

@article{Ni-Wu:2015-1,
	author = {Ni, Y.~ and Wu, Z.~},
	date-added = {2016-03-08 16:56:41 +0000},
	date-modified = {2017-02-02 06:05:03 +0000},
	journal = {J. Reine Angew. Math.},
	pages = {1--17},
	title = {Cosmetic surgeries on knots in ${S}^3$},
	volume = {706},
	year = {2015}}

@misc{Perelman:2003-2,
	author = {Perelman, G.~},
	date-added = {2015-05-18 03:39:00 +0000},
	date-modified = {2015-05-18 06:02:14 +0000},
	howpublished = {ar{X}iv:0307245},
	title = {Finite extinction time for the solutions to the {R}icci flow on certain three-manifolds},
	year = {2003}}

@misc{Perelman:2003-1,
	author = {Perelman, G.~},
	date-added = {2015-05-18 03:38:33 +0000},
	date-modified = {2019-10-01 14:53:26 -0400},
	howpublished = {ar{X}iv:0303109},
	title = {{R}icci flow with surgery on three-manifolds},
	year = {2003}}

@book{Hatcher:2002,
	address = {Cambridge},
	author = {Hatcher, A~},
	date-added = {2013-06-16 23:29:03 +0200},
	date-modified = {2013-06-16 23:30:16 +0200},
	pages = {xii+544},
	publisher = {Cambridge Univ. Press},
	title = {Algebraic Topology},
	year = {2002}}

@article {gabai:surgery-knots-solidtori,
    AUTHOR = {Gabai, David},
     TITLE = {Surgery on knots in solid tori},
   JOURNAL = {Topology},
  FJOURNAL = {Topology. An International Journal of Mathematics},
    VOLUME = {28},
      YEAR = {1989},
    NUMBER = {1},
     PAGES = {1--6},
      ISSN = {0040-9383},
   MRCLASS = {57M25 (57R30)},
  MRNUMBER = {991095},
MRREVIEWER = {Cameron\ McA.\ Gordon},
       DOI = {10.1016/0040-9383(89)90028-1},
       URL = {https://doi.org/10.1016/0040-9383(89)90028-1},
}

@article{livingston-wrappingnumber,
  title={MAZUR MANIFOLDS AND WRAPPING NUMBERS OF KNOTS IN S1 X S2},
  author={Livingston, Charles},
  journal={HOUSTON JOURNAL OF MATHEMATICS},
  volume={11},
  number={4},
  pages={523--533},
  year={1985},
  publisher={UNIV HOUSTON DEPT MATH, HOUSTON, TX 77204}
}

@article{Ozsvath-Szabo:2003-2,
	author = {Ozsv{\'a}th, P.~ and Szab{\'o}, Z.~},
	coden = {ADMTA4},
	doi = {10.1016/S0001-8708(02)00030-0},
	fjournal = {Advances in Mathematics},
	issn = {0001-8708},
	journal = {Adv. Math.},
	mrclass = {57R58 (57M27)},
	mrnumber = {1957829 (2003m:57066)},
	mrreviewer = {Jacob Andrew Rasmussen},
	number = {2},
	pages = {179--261},
	title = {Absolutely graded {F}loer homologies and intersection forms for four-manifolds with boundary},
	url = {http://dx.doi.org/10.1016/S0001-8708(02)00030-0},
	volume = {173},
	year = {2003},
	bdsk-url-1 = {http://dx.doi.org/10.1016/S0001-8708(02)00030-0}}

@book{Rasmussen:2003-1,
	author = {Rasmussen, J.~A.~},
	date-modified = {2017-02-02 05:37:32 +0000},
	isbn = {978-0496-39374-9},
	mrclass = {Thesis},
	mrnumber = {2704683},
	note = {Thesis (Ph.D.)--Harvard University},
	pages = {126},
	publisher = {ProQuest LLC, Ann Arbor, MI},
	title = {Floer homology and knot complements},
	url = {http://gateway.proquest.com/openurl?url_ver=Z39.88-2004&rft_val_fmt=info:ofi/fmt:kev:mtx:dissertation&res_dat=xri:pqdiss&rft_dat=xri:pqdiss:3091665},
	year = {2003},
	bdsk-url-1 = {http://gateway.proquest.com/openurl?url_ver=Z39.88-2004&rft_val_fmt=info:ofi/fmt:kev:mtx:dissertation&res_dat=xri:pqdiss&rft_dat=xri:pqdiss:3091665}}

@article{MO10,
	author = {Minasyan, A. and Osin, D.},
	doi = {10.1090/S0002-9947-2010-05067-6},
	fjournal = {Transactions of the American Mathematical Society},
	issn = {0002-9947},
	journal = {Trans. Amer. Math. Soc.},
	mrclass = {20F67 (20E36 57M07)},
	mrnumber = {2661509},
	mrreviewer = {Matt T. Clay},
	number = {11},
	pages = {6079--6103},
	title = {Normal automorphisms of relatively hyperbolic groups},
	url = {https://doi.org/10.1090/S0002-9947-2010-05067-6},
	volume = {362},
	year = {2010},
	bdsk-url-1 = {https://doi.org/10.1090/S0002-9947-2010-05067-6}}

@article{Gr75,
	author = {Grossman, Edna K.},
	doi = {10.1112/jlms/s2-9.1.160},
	fjournal = {Journal of the London Mathematical Society. Second Series},
	issn = {0024-6107},
	journal = {J. London Math. Soc. (2)},
	mrclass = {57A05 (20E25)},
	mrnumber = {0405423},
	mrreviewer = {J. S. Birman},
	pages = {160--164},
	title = {On the residual finiteness of certain mapping class groups},
	url = {https://doi.org/10.1112/jlms/s2-9.1.160},
	volume = {9},
	year = {1974/75},
	bdsk-url-1 = {https://doi.org/10.1112/jlms/s2-9.1.160}}
\end{document}